\newcounter{ENUM}
\newcounter{iENUM}
\newcommand{\bigcupdot}{\ensuremath{\mathaccent\cdot\cup}}
\newcommand{\beas}{\begin{eqnarray*}}
\newcommand{\eeas}{\end{eqnarray*}}
\newcommand{\bea}{\begin{eqnarray}}
\newcommand{\eea}{\end{eqnarray}}
\newcommand{\beq}{\begin{equation}}
\newcommand{\eeq}{\end{equation}}
\newcommand{\itm}{\item}
\newcommand{\be}{\begin{enumerate}}
\newcommand{\ee}{\end{enumerate}}
\newcommand{\fall}{\mathrm{for\ all}\ }
\newcommand{\nmo}{\frac 12(n-1)}
\newenvironment{ilist}{\renewcommand{\theENUM}{\roman{iENUM}}\renewcommand{\itm}{\addtocounter{iENUM}{1}\item[(\theENUM)]}\begin{itemize}\setcounter{iENUM}{0}}{\end{itemize}}
\newenvironment{alist}[1][0]{\renewcommand{\theENUM}{\alph{ENUM}}\renewcommand{\itm}{\addtocounter{ENUM}{1}\item[\theENUM)]}\begin{itemize}\setcounter{ENUM}{#1}}{\end{itemize}}
\newtheorem{thm}{Theorem}[section]
\newtheorem{prop}[thm]{Proposition}
\newtheorem{lem}[thm]{Lemma}
\newtheorem{cor}[thm]{Corollary}
\theoremstyle{definition}
\newtheorem{defn}[thm]{Definition}
\newtheorem{ex}[thm]{Example}
\theoremstyle{remark}
\newtheorem{note}[thm]{Note}
\newtheorem{rem}[thm]{Remark}
\numberwithin{equation}{section}
\def\P{{\mathbb{P}}}
\newcommand{\bm}[1]{{\boldsymbol{#1}}}
\newcommand{\st}{\,:\,}
\def\0{\bm{0}}
\def\iff{\qquad \Longleftrightarrow \qquad}
\def\Split{\operatorname{Split}}
\def\rind{\operatorname{RI}}
\def\ones{\operatorname{\#Ones}}
\def\Add{\operatorname{Add}}
\def\L{\mathcal L}
\def\cR{\mathcal R}
\def\cT{\mathcal T}
\subjclass[2010]{05A15}
\keywords{consistent set, distributive lattice, semistandard Young tableau}
\begin{document}
\title[A Distributive Lattice]{A Distributive Lattice Connected with
  Arithmetic Progressions of Length Three}
\author{Fu Liu}
\thanks{Fu Liu is partially supported by a grant from the Simons Foundation \#245939 and by NSF grant DMS-1265702.} \address{Fu Liu, Department of Mathematics, University of
  California, Davis, One Shields Avenue, Davis, CA 95616 USA.}
\email{fuliu@math.ucdavis.edu}
\date{\today}

\author{Richard P. Stanley}
\thanks{Richard Stanley is partially supported by NSF grant DMS-1068625.}
\address{Richard Stanley, Department of Mathematics, M.I.T.,
  Cambridge, MA 02139 USA.}
\email{rstan@math.mit.edu}

\begin{abstract}
Let $\mathcal{T}$ be a collection of 3-element subsets $S$ of
$\{1,\dots,n\}$ with 
the property that if $i<j<k$ and $a<b<c$ are two 3-element subsets in
$S$, then there exists an integer sequence $x_1<x_2<\cdots<x_n$ such
that $x_i,x_j,x_k$ and $x_a,x_b,x_c$ are arithmetic progressions.
We determine the number of such collections $\mathcal{T}$ and the
number of them of maximum size. These results confirm two conjectures
of Noam Elkies.
\end{abstract}

\maketitle

\section{Introduction} \label{sec1}

This paper has its origins in a problem contributed by Ron Graham to
the Numberplay subblog of the \emph{New York Times} Wordplay blog
\cite{blog}. Graham asked whether it is always possible to two-color a
set of eight integers such that there is no monochromatic three-term
arithmetic progression. A proof was found by Noam Elkies. Let
$\binom{[n]}{3}$ denote the set of all three-element subsets of $[n]=
\{1,2,\dots,n\}$. Define two such subsets, say $i<j<k$ and $a<b<c$, to
be \emph{consistent} if there exist integers
$x_1<x_2<\cdots<x_n$ for which both $x_i,x_j,x_k$ and $x_a,x_b,x_c$
are arithmetic progressions. For instance, $1<2<3$ and $1<2<4$ are
obviously not consistent. 

Let us call a collection $S$ of three-element subsets of integers
\emph{valid} if any two elements of $S$ are consistent. For
instance, the valid subsets of $\binom{[4]}{3}$ are 
 \beq \emptyset\ \{123\}\ \{124\}\ \{134\}\ 
   \{234\}\ \{123,134\}\ \{123, 234\}\ \{124,234\}, \label{eq:validex} 
  \eeq
so eight in all. 

Elkies needed to generate all valid subsets of
$\binom{[8]}{3}$. Define $f(n)$ to be the number of valid subsets of
$\binom{[n]}{3}$, so $f(4)=8$ as noted above. Elkies needed to work
with the case $n=8$, but he first computed that for $n\leq 7$ there
are exactly $2^{\binom{n-1}{2}}$ such subsets, leading to the obvious
conjecture that this formula holds for all $n\geq 1$. Elkies then
verified this formula for $n=8$, using the list of valid subsets to
solve Graham's problem. He then checked that $f(n)=2^{\binom{n-1}{2}}$
for $n=9$ and $n=10$. In Theorem~\ref{thm1} we show that indeed
$f(n)=2^{\binom{n-1}{2}}$ for all $n\geq 1$.

Let $\sigma(n)$ be the size (number of elements) of the largest valid
subset of $\binom{[n]}{3}$. Elkies showed that 
\begin{equation}\label{equ:sigma} \sigma(n) =
  \left\{ \begin{array}{rl} m(m-1), &  
       n=2m\\ [.5em]
     m^2, & n=2m+1. \end{array} \right. 
 \end{equation}
Let $g(n)$ be the number of valid subsets of $\binom{[n]}{3}$ of
maximal size $\sigma(n)$. Equation~\eqref{eq:validex} shows that
$\sigma(4)=2$ and $g(4)=3$. Elkies also conjectured (stated slightly
differently) that 
   \begin{equation}\label{equ:g}
     g(n) =  \left\{ \begin{array}{rl} 2^{(m-1)(m-2)}(2^m-1), & 
       n=2m\\[.5em] 
     2^{m(m-1)}, & n=2m+1. \end{array} \right. 
 \end{equation}
We prove this conjecture in Section~\ref{sec5}.

The basic idea behind our two proofs is the following. After the
Numberplay posting appeared, some further discussion continued on the
domino email forum \cite{propp}. In particular, David desJardins
observed that 
distinct triples $i<j<k$ and $i'<j'<k'$ are inconsistent if and only
if either
   $$ i\leq i',\ j\geq j',\ k\leq k' $$
or  
   $$ i\geq i',\ j\leq j',\ k\geq k'. $$
(The proof is straightforward though somewhat tedious.)
Jim Propp then defined a
partial ordering $P_n$ on certain elements of $[n]\times [n]\times
[n]$ such that the valid subsets of $\binom{[n]}{3}$ are just the
antichains of $P_n$. Since the antichains of a poset $P$ are just the
maximal elements of order ideals of $P$, we get that $f(n)=\#L_n$,
where $L_n := J(P_n)$ denotes the (distributive) lattice of order ideals of
$P_n$.  
  One can also define a coordinate-wise partial ordering $M_n$ on the
  set of semistandard Young tableaux (SSYT) of shape
  $\delta_{n-1}:=(n-2,n-3,\dots,1)$ and largest part at most $n-1$.  
  We show that $L_n\cong M_n$ by observing that both are distributive
  lattices and then showing that their posets of join-irreducibles are
  isomorphic. See \cite[Thm.~3.4.1]{ec1} for the relevant result on
  distributive lattices. It is an immediate consequence of standard
  results about SSYT that $\#M_n=2^{\binom{n-1}{2}}$, so the
  conjecture on $f(n)$ follows. The proof for $g(n)$ is more
  complicated. Let $K_n$ be the subset of $M_n$ corresponding to
  maximum size antichains of $P_n$ with respect to the isomorphism
  $L_n\to M_n$. By a result of Dilworth,  $K_n$ is a sublattice of
  $M_n$ and is therefore distributive.  We then determine the
  join-irreducibles of $K_n$. They are closely related to the
  join-irreducibles of $M_{1+\lfloor n/2\rfloor}$, from which we are
  able to compute $g(n)=\#K_n$.

\section{The number of valid subsets}

We assume the reader is familiar with basic definitions and results
on posets and tableaux presented in \cite[Chapter 3]{ec1} and
\cite[Chapter 7]{ec2}. 
Recall that for any graded poset $P$, its \emph{rank-generating
  function} is  
 \[ F(P,q) = \sum_{x \in P} q^{\mathrm{rank}(x)}.\]

In this paper, we write a tableau $T$ using ``English notation,'' so
the longest row is at the top. Write $T_{a,b}=c$ to mean that the
$(a,b)$-entry of $T$ is equal to $c$.

On April 17, 2013, Jim Propp posted on the Domino Forum \cite{propp}
the following statement.

\begin{quote} 
 I don't know if this reformulation is helpful, but pairwise
  consistent sets are in bijection with antichains in the subposet of
  $[n]\times [n]\times [n]$ containing all the $(i,j,k)$'s that
  satisfy $i+j < n+1 < j+k$.
\end{quote}

\begin{quote} ($[n]\times [n] \times [n]$ is the set $\{(i,j,k)\st 1 \leq
  i,j,k \leq n\}$, ordered so that $(i,j,k)\leq (i',j',k')$ iff $i
  \leq i'$ and $j \leq j'$ and $k \leq k'$.)
\end{quote}

\begin{quote}
To see the bijection, just map $(i,j,k)$ to $(i,n+1-j,k)$.
\end{quote}

Propp's statement follows easily from the observation of David
desJardins mentioned in the previous section.

Denote Propp's poset by $P_n$. The order ideals of $P_n$ form a
distributive lattice $L_n=J(P_n)$ under inclusion
\cite[{\S}3.4]{ec1}. There is a simple bijection \cite[end of
  {\S}3.1]{ec1} between the order ideals and antichains of a finite
poset. Further, under this bijection, the size of an antichain of a poset $P$ is exactly the number of elements covered by the corresponding order ideal in $J(P).$ Hence,
\begin{align}
  f(n)=& \#L_n, \text{the number of elements of $L_n$;} \label{fn} \\
  \sigma(n)=& \max(\text{number of elements covered by $x$}: x \in L_n); \label{sigman} \\
  g(n)=& \text{the number of $x \in L_n$ such that $x$ covers $\sigma(n)$ elements.} \label{gn}
\end{align}

Recall from Section~\ref{sec1} that $M_n$ is the poset of all SSYT
(semistandard Young tableaux) of shape
$\delta_{n-1}=(n-2,n-3,\dots,1)$ and largest part at most $n-1$,
ordered componentwise.  For
$n \ge 2$ the poset $M_n$ is a distributive lattice, where join is
entrywise maximum and meet is entrywise minimum, since either of the
operations of maximum and minimum on the integers distributes over the
other. We remark that $M_1$ is an empty set, which is neither
interesting nor a distributive lattice. Hence throughout this paper,
we assume $n \ge 2$. Let $Q_n$ denote the poset of join-irreducibles
of $M_n$, so $M_n=J(Q_n)$.

\begin{thm}\label{thm:LnMn} For $n \ge 2,$
\[ L_n \cong M_n.\]
\end{thm}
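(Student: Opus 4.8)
The plan is to apply the fundamental theorem of finite distributive lattices in the form quoted in the introduction (\cite[Thm.~3.4.1]{ec1}): two finite distributive lattices are isomorphic precisely when their posets of join-irreducibles are isomorphic. Since $L_n=J(P_n)$, the join-irreducibles of $L_n$ form a poset isomorphic to $P_n$ itself (they are the principal order ideals of $P_n$), while $Q_n$ is by definition the poset of join-irreducibles of $M_n$. So everything reduces to producing an isomorphism $P_n\cong Q_n$.

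First I would describe $Q_n$ explicitly. The bottom element of $M_n$ is the SSYT $T^{0}$ with $T^{0}_{a,b}=a$. For a cell $(a,b)$ of $\delta_{n-1}$ and an integer $v$ with $a<v$, let $J_{a,b,v}$ denote the entrywise-least SSYT lying in $M_n$ whose $(a,b)$-entry is at least $v$; using that columns of an SSYT are strictly increasing and rows weakly increasing, one checks that $(J_{a,b,v})_{a',b'}=v+(a'-a)$ when $a'\ge a$ and $b'\ge b$, and equals $a'$ otherwise, so that $J_{a,b,v}$ actually lies in $M_n$ (all entries $\le n-1$) exactly when $v\le a+b$. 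I would then verify that the join-irreducibles of $M_n$ are precisely the $J_{a,b,v}$ with $a,b\ge 1$, $a+b\le n-1$, and $a+1\le v\le a+b$: a join-irreducible is an element that covers a unique element, i.e.\ an SSYT exactly one of whose entries may be lowered by $1$ and remain in $M_n$, and such a tableau must coincide with $J_{a,b,T_{a,b}}$, where $(a,b)$ is that entry. Finally, since an SSYT $T$ satisfies $T\ge J_{a,b,v}$ if and only if $T_{a,b}\ge v$, one reads off the order on $Q_n$: $J_{a_1,b_1,v_1}\le J_{a_2,b_2,v_2}$ in $M_n$ if and only if $a_1\ge a_2$, $b_1\ge b_2$, and $v_1-a_1\le v_2-a_2$.

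With $Q_n$ so described, I would define $\psi\colon P_n\to Q_n$ by $\psi(i,j,k)=J_{a,b,v}$ with $a=n+1-k$, $b=n-j$, and $v=n+1-k+i$. The defining inequalities of $P_n$ become exactly the conditions that land $\psi(i,j,k)$ in $Q_n$: $k\le n$ gives $a\ge 1$; $i+j<n+1$ gives both $b=n-j\ge 1$ and $v-a=i\le n-j=b$; and $n+1<j+k$ gives $a+b=(n+1-k)+(n-j)\le n-1$, i.e.\ that $(a,b)$ is a cell of $\delta_{n-1}$ with $v\le a+b$. Under $\psi$ the three coordinatewise comparisons translate as $k_1\le k_2\Leftrightarrow a_1\ge a_2$, $j_1\le j_2\Leftrightarrow b_1\ge b_2$, and $i_1\le i_2\Leftrightarrow v_1-a_1\le v_2-a_2$, so by the order description of $Q_n$, $\psi$ is a bijection and an order isomorphism; this proves $L_n\cong M_n$.

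I expect the genuine work to be in the explicit identification of $Q_n$: because the column entries of an SSYT must increase \emph{strictly}, some care is needed to propagate a single lower-bound constraint to the shape $J_{a,b,v}$, to pin down exactly which entries of a given SSYT may be decremented (the covering relation), and to confirm the feasibility bound $v\le a+b$. Once $Q_n$ is in this form the verification that $\psi$ is an order isomorphism is a routine check of inequalities. (One could alternatively try to match order ideals of $P_n$ with SSYT directly, without passing to join-irreducibles, but the route through \cite[Thm.~3.4.1]{ec1} seems cleanest given how $L_n$ and $M_n$ were set up.)
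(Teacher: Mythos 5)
Your proposal is correct and follows essentially the same route as the paper: your $J_{a,b,v}$ are exactly the paper's join-irreducibles $\Add(\cT_{n-1}^0;a,b,k)$ with $k=v-a$ (Corollary \ref{cor:compdiff2} and Proposition \ref{prop:Phin}), and your map $\psi$ is, up to this reparametrization, the inverse of the paper's isomorphism $\varphi\colon\Phi_n\to P_n$ of Proposition \ref{prop:pnqn}, after which both arguments invoke the fundamental theorem of finite distributive lattices.
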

We will show that $P_n\cong Q_n$ in Proposition \ref{prop:pnqn}.  Hence by the fundamental theorem
for finite distributive lattices \cite[Thm.~3.4]{ec1}, $L_n\cong
M_n$. Theorem \ref{thm:LnMn} follows. See Figure~\ref{fig1} for the
lattices $L_4$ and $M_4$. We have labelled the join-irreducibles of
$L_4$ by the corresponding elements of $P_4$. One can also confirm $f(4)=8,$ $\sigma(4)=2$ and $g(4)=3$ by applying \eqref{fn},  \eqref{sigman}, and \eqref{gn} to the figure.

\begin{figure}
\centering
 \centerline{\includegraphics[width=12cm]{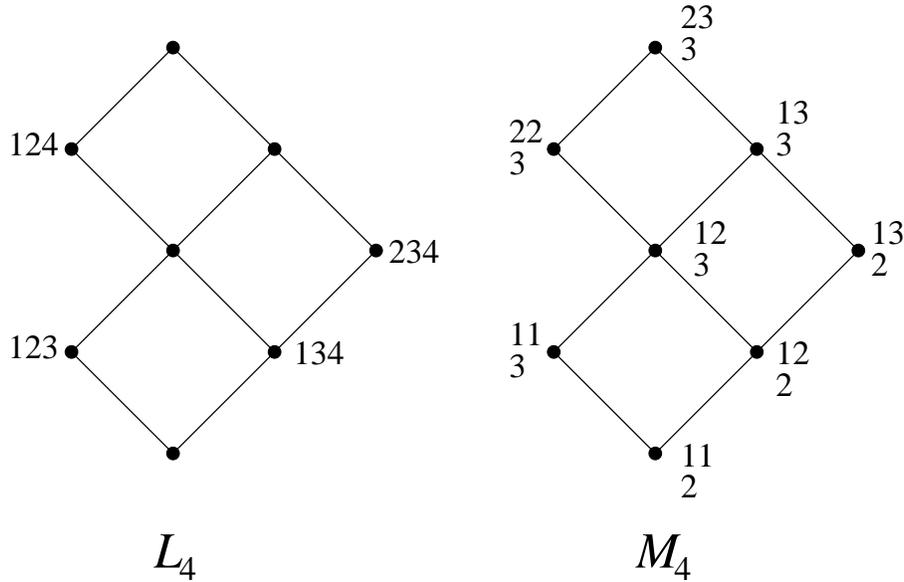}}
\caption{The distributive lattices $L_4$ and $M_4$}
\label{fig1}
\end{figure}

One main application of Theorem \ref{thm:LnMn} is that we are able to describe $f(n), \sigma(n)$ and
$g(n)$ as statistics related to the distributive lattice $M_n$. We say
that an entry $c$ of a tableau $T \in M_n$ is \emph{reducible} if by
replacing $c$ with $c-1$ in $T$ we obtain another tableau in $M_n$. 

\begin{note}\label{note:red}
The $(a,b)$-entry of a SSYT $T$ is reducible if and only if
 \[ T_{a,b}- T_{a, b-1} \ge 1 \qquad \text{ and } \qquad
 T_{a,b}-T_{a-1,b} \ge 2,\] 
where by convention we let for all $a,b$,
 \[ T_{a, 0} := a \quad \text{ \qquad and \qquad } T_{0,b}:= 0 \]  
(although they are not real entries in $T$).
\end{note}

\begin{cor}\label{cor:useMn} For $n \ge 2,$
 \be\item[\emph{(a)}]  
	$f(n) = \# M_n = \# J(Q_n)$.
     \item[\emph{(b)}]
	$\sigma(n)$ is the maximum number of reducible entries in $T$,
       for $T\in M_n$.
     \item[\emph{(c)}]
	$g(n)$ is the number of tableaux in $M_n$ that have the
       maximum number $\sigma(n)$ of reducible entries.
    \ee
\end{cor}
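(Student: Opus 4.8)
The plan is to derive Corollary~\ref{cor:useMn} directly from Theorem~\ref{thm:LnMn} together with the translation \eqref{fn}--\eqref{gn} of $f(n),\sigma(n),g(n)$ into statistics of $L_n$, using the fact that covering relations and ``number of elements covered'' are preserved by any lattice isomorphism. Part~(a) is immediate: combining \eqref{fn} with $L_n\cong M_n$ gives $f(n)=\#L_n=\#M_n$, and $\#M_n=\#J(Q_n)$ by the definition of $Q_n$. The substance of the corollary is in parts~(b) and~(c), and the key step is to identify, for a tableau $T\in M_n$, the elements of $M_n$ that are covered by $T$ in the lattice order, and to show that they correspond bijectively to the reducible entries of $T$.

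First I would recall that in a finite distributive lattice $J(P)$ realized as a poset of order ideals (equivalently, as a poset of tableaux ordered componentwise), an element $y$ is covered by $T$ precisely when $y$ is obtained from $T$ by deleting a single maximal element of the corresponding order ideal; in tableau language this means decreasing exactly one entry of $T$ by $1$ in such a way that the result is still a valid SSYT of the same shape with entries at most $n-1$. By Note~\ref{note:red}, decreasing the $(a,b)$-entry $c$ of $T$ by $1$ yields a tableau in $M_n$ exactly when $(a,b)$ is a reducible entry. Hence the number of elements of $M_n$ covered by $T$ equals the number of reducible entries of $T$. Here I would use Note~\ref{note:red} as a black box (it is stated just above), and I would also note that distinct reducible entries give distinct covered tableaux, so the count is exact, not merely an upper bound. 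Then, transporting \eqref{sigman} and \eqref{gn} across the isomorphism $L_n\cong M_n$ of Theorem~\ref{thm:LnMn}: $\sigma(n)$ is the maximum, over $T\in M_n$, of the number of elements covered by $T$, which by the preceding paragraph is the maximum number of reducible entries in a tableau of $M_n$; and $g(n)$ is the number of $T\in M_n$ covering exactly $\sigma(n)$ elements, i.e.\ the number of tableaux with exactly the maximal number $\sigma(n)$ of reducible entries. This gives (b) and (c).

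The only real subtlety — and the step I would be most careful about — is the dictionary between ``$T$ covers $y$ in $M_n$'' and ``$y$ is obtained from $T$ by a reducible-entry decrement.'' One direction (decrementing a reducible entry produces a lower cover) is clear once one checks the result is still pointwise below $T$ with nothing strictly in between, which follows because the two tableaux differ in a single coordinate by $1$. The other direction (every lower cover arises this way) needs the observation that if $y\le T$ entrywise and $y\ne T$, one can always find a coordinate where they differ by exactly $1$ and where decrementing $T$ there stays in $M_n$; this is exactly the statement that the order ideal for $T$ has the order ideal for $y$ below it iff one can peel off maximal cells one at a time, which is the standard correspondence between $J(P)$'s covering relations and single-element modifications of order ideals, cf.\ \cite[{\S}3.4]{ec1}. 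With that in hand the rest is bookkeeping, so I would keep the write-up short: state the cover description, cite Note~\ref{note:red}, invoke Theorem~\ref{thm:LnMn} and \eqref{fn}--\eqref{gn}, and conclude.
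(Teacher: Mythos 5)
Your proposal is correct and follows essentially the same route as the paper: the paper's proof simply cites Theorem~\ref{thm:LnMn}, equations \eqref{fn}--\eqref{gn}, and the observation that the number of elements of $M_n$ covered by $T$ equals the number of reducible entries of $T$. You merely spell out that last observation (the cover--decrement dictionary) in more detail than the paper does, which is fine.
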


\begin{proof}
  It follows directly from Theorem \ref{thm:LnMn}, equations
  \eqref{fn}, \eqref{sigman} and \eqref{gn}, and the observation that
  for any $T \in M_n$, the number of elements covered by $T$ is the
  same as the number of reducible entries of $T.$ 
\end{proof}

In order to prove $P_n \cong Q_n$, we first need to describe the poset
$Q_n$ of join-irreducibles of $M_n$.

\begin{defn}\label{defn:add}
For any tableau $T$ with integer entries, we define $\Add(T; a, b, k)$
to be the tableau obtained from $T$ by adding $k$ to each
$(a',b')$-entry of $T$ with $(a',b') \ge (a,b)$. 

Let $\cT_{n-1}^0$ be the minimal element of $M_n$, so $\cT_{n-1}^0$
is the tableau of shape $\delta_{n-1}$ whose $(a,b)$-entry is
$a$. \end{defn} 

\begin{prop}\label{prop:Phin}
  Let $\Phi_n$ be the set $\{ (a, b, k) \in \P^3 \ | \ 1 \le k \le b
  \le n-1-a\}$ with the partial ordering  
  \[ (a,b,k) \le_{\Phi_n} (a', b', k') \text{ if } a \ge a',\  b \ge
  b',\ k \le k'.\] 
  
  Then for any $(a,b,k) \in \Phi_n,$ the tableau $\Add(\cT_{n-1}^0; a, b, k)$ is a
  join-irreducible of $M_n$. Moreover, all join-irreducible elements
  of $M_n$ are obtained in this way. 

  Furthermore, the map \[ \psi: (a,b,k) \mapsto \Add(\cT_{n-1}^0; a, b, k)\]
  induces a poset isomorphism from $\Phi_n$ to $Q_n$. 
\end{prop}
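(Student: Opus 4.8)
The plan is to prove Proposition~\ref{prop:Phin} in three stages, matching its three assertions: that each $\Add(\cT_{n-1}^0;a,b,k)$ with $(a,b,k)\in\Phi_n$ lies in $M_n$ and is join-irreducible; that every join-irreducible arises this way; and that $\psi$ is an order isomorphism $\Phi_n\to Q_n$. First I would check that $\Add(\cT_{n-1}^0;a,b,k)$ is a genuine SSYT of shape $\delta_{n-1}$ with entries at most $n-1$. The condition $1\le k\le b\le n-1-a$ is exactly what is needed: adding $k$ to the lower-right rectangle hanging from $(a,b)$ preserves weak increase along rows (since the breakpoint is between columns $b-1$ and $b$, and $k\ge 1$ keeps the jump positive) and strict increase down columns (the only delicate spot is the step from row $a-1$ to row $a$ in columns $\ge b$, where the original gap is $1$ and stays $\ge 1$ after adding $k\ge 1$; wait — more carefully, we need the post-addition column to remain strictly increasing, and since we add $k$ to row $a$ and below but not to row $a-1$, the gap at that step becomes $1+k\ge 2>0$, fine), and the largest entry, occurring at position $(a, b)$ in row $a$ after the shift, is at most $a + k \le a + b \le n-1$ using $b\le n-1-a$. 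Actually the largest entry of the whole tableau is in the last box of row $1$... I would just verify directly that the maximum entry is $\le n-1$ using $k\le b\le n-1-a$ and the fact that row $1$ of $\cT_{n-1}^0$ is all $1$'s of length $n-2$.

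Next, for join-irreducibility: an element of a finite distributive lattice $J(Q)$ is join-irreducible iff it covers exactly one element, equivalently it is the order ideal generated by a single element of $Q$. I would argue that $T:=\Add(\cT_{n-1}^0;a,b,k)$ covers exactly one element of $M_n$ by showing it has a unique reducible entry (invoking Note~\ref{note:red} and Corollary~\ref{cor:useMn}'s identification of reducible entries with covered elements — or proving the relevant combinatorial fact directly): the unique reducible position is $(a,b)$ itself, because that is the unique entry where the two inequalities of Note~\ref{note:red} both become tight-but-satisfied after the shift, and everywhere else either the row gap or the column gap drops to $0$ (so reducing would violate SSYT) or the shift never touched it (reducing would break SSYT because $\cT_{n-1}^0$'s entries are already as small as allowed). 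Then I would count: there are $|\Phi_n|=\sum_{a=1}^{n-2}\binom{n-1-a}{2}\cdot$... hmm, $\sum_{a\ge 1}\#\{(b,k):1\le k\le b\le n-1-a\}=\sum_{a}\binom{n-a}{2}=\binom{n-1}{3}+\binom{n-2}{3}+\cdots$, and separately $\#Q_n=\operatorname{rank}(M_n)$, the rank of the top element $\cT_{n-1}^1$ (entrywise maximal SSYT) above the bottom, which is a routine sum over boxes of (max allowed entry $-$ $a$). If these two counts agree and $\psi$ is injective into $Q_n$, then $\psi$ is a bijection onto $Q_n$ and the "all join-irreducibles arise this way" claim follows. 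Injectivity of $\psi$ is clear since distinct triples produce tableaux differing in their (unique) set of shifted boxes.

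Finally, for the order isomorphism I would show $\psi$ and $\psi^{-1}$ are both order-preserving with respect to $\le_{\Phi_n}$ and the componentwise order on $M_n$. The "only if" direction — if $(a,b,k)\le_{\Phi_n}(a',b',k')$, i.e. $a\ge a'$, $b\ge b'$, $k\le k'$, then $\psi(a,b,k)\le\psi(a',b',k')$ entrywise — is a direct entrywise comparison: the box set $\{(a'',b''):(a'',b'')\ge(a,b)\}$ is contained in $\{(a'',b''):(a'',b'')\ge(a',b')\}$, so at every box $\psi(a,b,k)$ adds at most $k\le k'$ and $\psi(a',b',k')$ adds either $k'$ or nothing but whenever the former adds $k$ the latter adds $k'\ge k$. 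For the converse, I would use that in a distributive lattice, for join-irreducibles $x,y$ one has $x\le y$ in $J(Q)$ iff the corresponding elements of $Q$ satisfy the relation; concretely, I would read off the relation by comparing the unique reducible entries, or by a minimal-tableau argument: $\psi(a,b,k)\le\psi(a',b',k')$ forces the shifted region of the smaller to be "compatible" with that of the larger, which unwinds to $a\ge a'$, $b\ge b'$, $k\le k'$. The main obstacle will be the join-irreducibility step, specifically the clean bookkeeping in Note~\ref{note:red} that pins down $(a,b)$ as the \emph{unique} reducible entry of $\Add(\cT_{n-1}^0;a,b,k)$ — this requires a careful case analysis over whether a given box lies strictly inside, on the boundary of, or outside the shifted rectangle, and handling the boundary-of-shape conventions $T_{a,0}=a$, $T_{0,b}=0$; everything else is essentially forced once that is in hand.
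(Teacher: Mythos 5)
Your proposal is correct in substance, but your route to the assertion that \emph{every} join-irreducible of $M_n$ is of the form $\Add(\cT_{n-1}^0;a,b,k)$ is genuinely different from the paper's. The paper argues structurally: Lemma~\ref{lem:compdiff} and Corollaries~\ref{cor:compdiff1} and~\ref{cor:compdiff2} show that any SSYT with a unique reducible entry must equal $\Add(\cT^0;a_0,b_0,k)$, after which the only point left is that the bound on entries is equivalent to $k\le b$; note the largest affected entry sits at the bottom of column $b$ and equals $n-1-b+k$, not at $(a,b)$ or at the end of row $1$ as you first wrote, though your ``verify directly'' fallback would land on the right inequality. You instead get surjectivity by counting: $\psi$ is injective, each image has $(a,b)$ as its unique reducible entry (this is the easy direction, the paper's (iii)$\Rightarrow$(i) in Corollary~\ref{cor:compdiff2}), and $\#Q_n$ equals the rank of $M_n$. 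That route does work --- both $\#\Phi_n$ and the rank of $M_n$ equal $\sum_{b=1}^{n-2} b\,(n-1-b)$, the latter because the top element of $M_n$ has $(a,b)$-entry $a+b$ --- but you left this as a conditional (``if these two counts agree''), and it silently relies on the fact that every cover relation in $M_n$ changes the entry sum by exactly $1$, so that the rank equals the entry-sum difference between top and bottom; this is true (and is also used without comment in the paper's Theorem~\ref{thm1}), but in your argument it carries real weight and needs a short proof. The trade-off: the paper's approach is self-contained and needs no gradedness facts, while yours is shorter once the rank computation is justified, at the cost of replacing a structural characterization by a cardinality coincidence. Your handling of the order isomorphism matches the paper's: one direction by containment of the shifted regions, and the converse is immediate by evaluating both tableaux at the single box $(a,b)$, which is the concrete content of your ``unwinds to'' step.
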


\begin{ex} Let $n=4$. Then $\ytableausetup{centertableaux, boxsize=1em}
  \cT_3^0 = \begin{ytableau}
1 & 1 \\
2  
\end{ytableau}$, and the four join-irreducibles of $M_4$ are:
\begin{align*}
  \ytableausetup{centertableaux, boxsize=1.2em}
  \Add(\cT_3^0; 1, 1,1) = \begin{ytableau}
2 & 2 \\
3  
\end{ytableau}   \qquad & 
  \Add(\cT_3^0; 2, 1,1)= \begin{ytableau}
1 & 1 \\
3  
\end{ytableau} \\
  \Add(\cT_3^0; 1, 2,1) = \begin{ytableau}
1 & 2 \\
2 
\end{ytableau}   \qquad &
  \Add(\cT_3^0; 1, 2,2)= \begin{ytableau}
1 & 3 \\
2  
\end{ytableau} 
\end{align*}

\end{ex}

We need several preliminary results before proving Proposition \ref{prop:Phin}.
\begin{lem}\label{lem:compdiff}
  Suppose $T$ is an SSYT and $T_{a,b}$ is an entry of $T$. If 
  \begin{equation}\label{equ:compdiff1}
    T_{a,b} - T_{a-1,b} > T_{a,b-1}-T_{a-1,b-1},
  \end{equation}
  then $T_{a,b}$ is a reducible entry.  
\end{lem}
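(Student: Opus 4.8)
The plan is to reduce at once to the criterion of Note~\ref{note:red}. Since reducing an entry only decreases it, the constraint ``largest part at most $n-1$'' is never violated by the reduction, so Note~\ref{note:red} applies to any SSYT and it suffices to verify the two inequalities $T_{a,b}-T_{a,b-1}\ge 1$ and $T_{a,b}-T_{a-1,b}\ge 2$, where throughout we use the boundary conventions $T_{a,0}:=a$ and $T_{0,b}:=0$. I would abbreviate $d:=T_{a,b}-T_{a-1,b}$ and $e:=T_{a,b-1}-T_{a-1,b-1}$, so the hypothesis \eqref{equ:compdiff1} reads precisely $d>e$.

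For the second inequality I would first note that $e\ge 1$ in every case: when $b\ge 2$ this is the column-strictness of the $(b-1)$st column at row $a$ (the cells $(a-1,b-1)$ and $(a,b-1)$ lie in the shape because $(a,b)$ does), while when $b=1$ the convention gives $e=a-(a-1)=1$. Combining with $d>e$ yields $d\ge 2$, i.e.\ $T_{a,b}-T_{a-1,b}\ge 2$.

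For the first inequality I would argue by contradiction. Weak monotonicity along row $a$, together with $T_{a,1}\ge a=T_{a,0}$ for an SSYT (to cover the case $b=1$), gives $T_{a,b}\ge T_{a,b-1}$, so it is enough to rule out equality. If $T_{a,b}=T_{a,b-1}$, then using $T_{a-1,b}\ge T_{a-1,b-1}$ — weak monotonicity along row $a-1$, still valid when $a=1$ since $T_{0,b}=T_{0,b-1}=0$ — we get $d=T_{a,b}-T_{a-1,b}\le T_{a,b-1}-T_{a-1,b-1}=e$, contradicting $d>e$. Hence $T_{a,b}>T_{a,b-1}$, which is the first inequality, and the lemma follows from Note~\ref{note:red}.

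I do not expect a genuine obstacle here; the entire content is the two short monotonicity comparisons above. The only place needing a bit of care is the bookkeeping for the boundary cases $a=1$ and $b=1$, where some of the four ``entries'' appearing in \eqref{equ:compdiff1} are formal values supplied by the conventions of Note~\ref{note:red} rather than actual cells of $T$; in each step I would check explicitly that those conventions make the same inequality go through.
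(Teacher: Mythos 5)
Your proposal is correct and follows essentially the same route as the paper: both inequalities of Note~\ref{note:red} are verified directly from the hypothesis, using column-strictness (or the convention $T_{a,0}=a$) to get $T_{a,b}-T_{a-1,b}\ge 2$ and weak row-monotonicity in row $a-1$ (or $T_{0,b}=0$) to get $T_{a,b}-T_{a,b-1}\ge 1$. The only cosmetic difference is that the paper obtains the second inequality by directly rearranging \eqref{equ:compdiff1} as $T_{a,b}-T_{a,b-1} > T_{a-1,b}-T_{a-1,b-1}\ge 0$, whereas you rule out equality by contradiction; the content is identical.
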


\begin{proof}
  We clearly have $T_{a,b}-T_{a-1,b} \ge 2$. Thus it is enough to
  show $T_{a,b} - T_{a,b-1} \ge 1,$ which follows from  
  $T_{a,b} - T_{a,b-1} > T_{a-1,b} - T_{a-1,b-1} \ge 0$.
\end{proof}

\begin{cor}\label{cor:compdiff1}
Suppose $T$ is an SSYT and $T_{a,b}$ is an entry of $T$. If 
  \begin{equation}\label{equ:compdiff2}
    T_{a,b} - T_{a-1,b} < T_{a,b-1}-T_{a-1,b-1},
  \end{equation}
  then there is a reducible entry in the $a$-th row of $T$.
\end{cor}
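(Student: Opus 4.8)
The plan is to argue by contradiction: assume no entry in the $a$-th row of $T$ is reducible, and derive that inequality \eqref{equ:compdiff2} cannot hold. By Note~\ref{note:red}, an entry $T_{a,c}$ fails to be reducible precisely when either $T_{a,c}-T_{a,c-1}\le 0$ (forced to be $=0$ for an SSYT, i.e.\ the row is flat at that step) or $T_{a,c}-T_{a-1,c}\le 1$ (forced to be $=1$ by column-strictness). So the non-reducibility of the entire $a$-th row gives, for every column $c$ in that row, the dichotomy
\[
  T_{a,c}-T_{a,c-1}=0 \qquad\text{or}\qquad T_{a,c}-T_{a-1,c}=1.
\]

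The key idea is to track the quantity $D_c := T_{a,c}-T_{a-1,c}$ as $c$ increases, and show it can never \emph{decrease} under the standing assumption. Suppose $c$ is a column in the $a$-th row. If $T_{a,c}-T_{a-1,c}=1$, then since the $(a-1)$-st row is weakly increasing, $T_{a-1,c}\ge T_{a-1,c-1}$, and since the $a$-th row is weakly increasing, $T_{a,c}\ge T_{a,c-1}$; combined with column-strictness $T_{a,c-1}\ge T_{a-1,c-1}+1$, we get $D_{c-1}=T_{a,c-1}-T_{a-1,c-1}\ge 1 = D_c$ only in the wrong direction, so I instead argue: $D_c = 1$ is the minimum possible value of any $D$ by column-strictness, hence $D_{c-1}\ge D_c$. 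If instead $T_{a,c}-T_{a,c-1}=0$, then $D_c - D_{c-1} = (T_{a,c}-T_{a,c-1}) - (T_{a-1,c}-T_{a-1,c-1}) = -(T_{a-1,c}-T_{a-1,c-1}) \le 0$ since the $(a-1)$-st row is weakly increasing, so again $D_{c-1}\ge D_c$. In both branches $D_{c-1}\ge D_c$, i.e.\ $T_{a,b-1}-T_{a-1,b-1}\ge T_{a,b}-T_{a-1,b}$, which directly contradicts \eqref{equ:compdiff2}. (One should double-check the boundary case where $b-1=0$, i.e.\ $T_{a,b}$ is the first entry of its row, using the conventions $T_{a,0}=a$ and $T_{0,b}=0$ from Note~\ref{note:red}; there \eqref{equ:compdiff2} reads $T_{a,1}-T_{a-1,1}<a-(a-1)=1$, forcing $T_{a,1}-T_{a-1,1}\le 0$, impossible for an SSYT, so the hypothesis is vacuous and there is nothing to prove.)

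I expect the main obstacle to be purely bookkeeping: getting the direction of every inequality right and handling the left-boundary conventions cleanly, since the statement is a one-step monotonicity argument once the right quantity $D_c$ is identified. A secondary subtlety is making sure the dichotomy from Note~\ref{note:red} is being applied to \emph{every} relevant column between the leftmost column and column $b$, not just at column $b$ — the contradiction is obtained already from the single step $c=b$, so in fact only the non-reducibility of the entry $T_{a,b}$ itself (together with its left neighbor in row $a-1$) is needed, and the full strength "some entry in the row is reducible" in the conclusion is what we get for free by contrapositive. It may be cleanest to phrase the whole proof as: if $T_{a,b}$ is not reducible then $D_{b-1}\ge D_b$, contradicting \eqref{equ:compdiff2}; hence $T_{a,b}$ is reducible, and in particular the $a$-th row contains a reducible entry.
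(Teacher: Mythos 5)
There is a genuine gap: your final ``contradiction'' is not a contradiction. Writing $D_c:=T_{a,c}-T_{a-1,c}$, the hypothesis \eqref{equ:compdiff2} says $D_b<D_{b-1}$, i.e.\ $D_{b-1}>D_b$, while what you derive from the non-reducibility of $T_{a,b}$ is $D_{b-1}\ge D_b$ --- an inequality in the \emph{same} direction, indeed a weaker consequence of the hypothesis itself. (What you have really proved is the contrapositive of Lemma~\ref{lem:compdiff}, whose hypothesis \eqref{equ:compdiff1} is the reversed inequality; you appear to have conflated \eqref{equ:compdiff1} with \eqref{equ:compdiff2}.) Consequently your proposed ``single-step'' streamlining --- that $T_{a,b}$ itself must be reducible --- is false. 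For example, take $a=2$, $b=3$ and the SSYT with first row $1,1,2$ and second row $2,4,4$: then $T_{2,3}-T_{1,3}=2<3=T_{2,2}-T_{1,2}$, so \eqref{equ:compdiff2} holds, yet $T_{2,3}$ is not reducible (it equals its left neighbor); the reducible entry guaranteed by the corollary is $T_{2,2}$, strictly to the left of column $b$.

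The correct ingredient is in your draft but you discarded it: the monotonicity $D_{c-1}\ge D_c$ must be chained over \emph{all} columns $c=1,\dots,b-1$ of row $a$ (not just $c=b$), together with the boundary convention $T_{a,0}=a$, which gives $D_0=1$. If no entry of row $a$ were reducible, the chain would give $1=D_0\ge D_1\ge\cdots\ge D_{b-1}$, whereas \eqref{equ:compdiff2} and column-strictness give $D_{b-1}\ge D_b+1\ge 2$, a genuine contradiction. This is essentially the paper's argument, phrased positively: since $D_{b-1}\ge 2>1=D_0$, there exists $1\le b'\le b-1$ with $D_{b'}>D_{b'-1}$, and Lemma~\ref{lem:compdiff} then shows $T_{a,b'}$ is reducible. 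Your boundary remark for $b-1=0$ is fine, and your two-case verification of $D_{c-1}\ge D_c$ is correct; only the logic of the concluding step needs to be repaired as above.
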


\begin{proof}
  \eqref{equ:compdiff2} implies that $T_{a,b-1} - T_{a-1,b-1} \ge 2 >
  1 = T_{a,0} - T_{a-1,0}$. Therefore there exists $1 \le b' \le b-1$
  such that 
   \[T_{a,b'} - T_{a-1,b'} > T_{a,b'-1}-T_{a-1,b'-1}.\]
   Then the conclusion follows from Lemma \ref{lem:compdiff}.
\end{proof}

\begin{cor}\label{cor:compdiff2}
  Suppose $T$ is an SSYT of shape $\lambda$. Then the following are
  equivalent. 
  \be
	  \itm[(i)] $T_{a_0,b_0}$ is the only reducible entry of $T$.
	  \itm[(ii)] For any pair of indices $(a,b)$ such that $T_{a,b}$ is
          an entry of $T$, we have the following:
  \be
	  \itm[(a)] $T_{a,b} - T_{a-1,b} = 1$ for any $(a,b)$ satisfying $a
          \neq a_0$ or else satisfying $a = a_0$ and $b < b_0$; 
	  \itm[(b)] $T_{a,b} - T_{a-1,b} \ge 2$ for $(a,b) = (a_0, b_0)$;
	  \itm[(c)] $T_{a,b} - T_{a-1,b} = T_{a,b-1}-T_{a-1,b-1}$ for any
          $(a,b)$ satisfying $a = a_0$ and $b > b_0$. 
  \ee
  \itm[(iii)] $T = \Add(\cT^0; a_0, b_0, k)$ for some $k \ge 1,$ where
  $\cT^0$ is the minimal SSYT of shape $\lambda$. 
  \ee
\end{cor}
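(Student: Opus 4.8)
The plan is to prove the chain of implications (iii)$\Rightarrow$(ii)$\Rightarrow$(i)$\Rightarrow$(iii), since (iii)$\Rightarrow$(ii) and (ii)$\Rightarrow$(i) are essentially direct unwindings of Note~\ref{note:red}, while (i)$\Rightarrow$(iii) carries the real content.

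For (iii)$\Rightarrow$(ii): starting from $\cT^0$, whose $(a,b)$-entry is $a$, we have $T^0_{a,b}-T^0_{a-1,b}=1$ everywhere (using the conventions $T_{0,b}=0$, $T_{a,0}=a$). Applying $\Add(\cT^0;a_0,b_0,k)$ adds $k$ exactly to the entries $(a',b')\ge(a_0,b_0)$; I would simply inspect how this changes each vertical difference $T_{a,b}-T_{a-1,b}$ according to whether $(a,b)$ and $(a-1,b)$ lie in the shifted region, which gives (a), (b), (c) directly. For (ii)$\Rightarrow$(i): conditions (a) and (c) force $T_{a,b}-T_{a-1,b}\ge 1$ for all entries, and (a) forces $T_{a,b}-T_{a-1,b}=1$ except possibly at $(a_0,b_0)$, so by Note~\ref{note:red} no entry outside row $a_0$ and no entry strictly left of $(a_0,b_0)$ in row $a_0$ is reducible; (b) makes $(a_0,b_0)$ reducible (one still needs $T_{a_0,b_0}-T_{a_0,b_0-1}\ge 1$, which follows from semistandardness plus (a)); and for $(a_0,b)$ with $b>b_0$, condition (c) says the vertical difference is constant along that part of the row, so iterating Lemma~\ref{lem:compdiff}'s hypothesis fails and one checks directly these entries are not reducible — more precisely (c) plus (b) gives $T_{a_0,b}-T_{a_0-1,b}\ge 2$ but also $T_{a_0,b}-T_{a_0,b-1}=T_{a_0-1,b}-T_{a_0-1,b-1}$, and since row $a_0-1$ is part of the $\cT^0$ pattern (difference $1$ vertically, so entries $a_0-1$), that horizontal difference is $0$, killing reducibility via the first condition of Note~\ref{note:red}.

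The main step is (i)$\Rightarrow$(iii). Assume $T_{a_0,b_0}$ is the unique reducible entry. First I would use Corollary~\ref{cor:compdiff1} contrapositively: for every $(a,b)$ with $a\neq a_0$ there is no reducible entry in row $a$, so we cannot have $T_{a,b}-T_{a-1,b}<T_{a,b-1}-T_{a-1,b-1}$; combined with Lemma~\ref{lem:compdiff} (whose hypothesis would produce a reducible entry in row $a$) we get $T_{a,b}-T_{a-1,b}=T_{a,b-1}-T_{a-1,b-1}$ for all $b$ in any row $a\neq a_0$, and since $T_{a,0}-T_{a-1,0}=1$ this yields $T_{a,b}-T_{a-1,b}=1$ throughout every row $a\neq a_0$. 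Next, in row $a_0$ itself: to the left of $b_0$ the same Lemma/Corollary argument forbids the vertical difference from ever exceeding or dropping below its previous value \emph{before} we reach a reducible entry, so it stays $1$ for $b<b_0$; at $b_0$ it is $\ge 2$ by reducibility; and for $b>b_0$ it cannot increase again (that would make $T_{a_0,b}$ reducible by Lemma~\ref{lem:compdiff}) nor decrease below... — here one must rule out a strict decrease, which would by Corollary~\ref{cor:compdiff1} force a reducible entry earlier in the row, contradicting uniqueness — so it is constant, giving (c). Having established (ii), the fact that $T$ differs from $\cT^0$ by adding a constant $k:=T_{a_0,b_0}-T^0_{a_0,b_0}\ge 2-1=1$ on exactly the cells $(a',b')\ge(a_0,b_0)$ follows by reconstructing $T$ row by row from the difference data.

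The likely obstacle is the bookkeeping in row $a_0$ for $b>b_0$: one must show the vertical difference can neither increase (easy, via Lemma~\ref{lem:compdiff}) nor decrease (needs Corollary~\ref{cor:compdiff1} to locate a second reducible entry to the left, contradicting (i)), and one must be careful that the boundary conventions $T_{a,0}=a$, $T_{0,b}=0$ are applied consistently so that the argument about row $a_0-1$ being the undisturbed $\cT^0$-pattern is valid even when $a_0=1$. None of this is deep, but it is exactly the kind of index-chasing where an off-by-one in the conventions of Note~\ref{note:red} would derail the argument.
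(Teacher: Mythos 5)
Your overall architecture, the cycle (iii)$\Rightarrow$(ii)$\Rightarrow$(i)$\Rightarrow$(iii), and your treatment of the rows $a\neq a_0$ (constant vertical difference via Lemma~\ref{lem:compdiff} and Corollary~\ref{cor:compdiff1}, hence difference $1$ from the boundary convention) coincide with the paper's proof. But there is a genuine gap in your handling of row $a_0$ for $b>b_0$. You rule out a strict decrease of the vertical difference there by saying that Corollary~\ref{cor:compdiff1} would ``force a reducible entry earlier in the row, contradicting uniqueness.'' It does not: the corollary (and even its proof, which places the reducible entry at some column $b'\le b-1$) only yields a reducible entry in row $a_0$, and since $b-1\ge b_0$, that entry can perfectly well be $(a_0,b_0)$ itself --- exactly the one reducible entry that hypothesis (i) permits. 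So no contradiction is obtained and the constancy of the vertical difference for $b>b_0$, i.e.\ condition (ii)(c), is not justified as written. (The same imprecision touches your claim for $b<b_0$, but there it is harmless: increases before $b_0$ are excluded by Lemma~\ref{lem:compdiff} and uniqueness, and the differences are bounded below by $1$ because columns strictly increase, so they must all equal $1$.)

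The missing idea is the one the paper uses --- and which you in fact already derive and exploit in your (ii)$\Rightarrow$(i) step: once every row $a\neq a_0$ has vertical difference $1$, all rows above $a_0$ are the minimal pattern, so $T_{a_0-1,b}=a_0-1$ for all $b$ (with $T_{0,b}=0$ covering the case $a_0=1$). Then $T_{a_0,b}-T_{a_0-1,b}=T_{a_0,b}-(a_0-1)$, and weak increase along row $a_0$ makes these vertical differences automatically weakly increasing in $b$; a strict decrease would literally mean $T_{a_0,b}<T_{a_0,b-1}$, contradicting semistandardness. With decreases excluded this way, Lemma~\ref{lem:compdiff} and the uniqueness hypothesis force equality of consecutive vertical differences at every $b\neq b_0$, with the jump (of size $\ge 1$, by Note~\ref{note:red}) occurring only at $b_0$; this is exactly (ii), and your reconstruction step then delivers (iii). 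So the gap is local and repairable with material you already have in hand, but the justification you gave for the columns $b>b_0$ would not survive scrutiny as stated.
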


\begin{proof}
 It is straightforward to verify that (ii) and (iii) are equivalent
 and that (iii) implies (i). We will show (i) implies (ii). Assuming
 (i), by Lemma \ref{lem:compdiff} and Corollary \ref{cor:compdiff1},
 we have
	\[ T_{a,b} - T_{a-1,b} = T_{a,b-1}-T_{a-1,b-1},\ \ \fall
        (a,b) \text{ with } a \neq a_0.\]
Note that 
\begin{equation}\label{equ:=1}
	T_{a,0}-T_{a-1,0} = a-(a-1) = 1, \qquad \fall a.
\end{equation}
We have $T_{a,b}-T_{a-1,b} = 1$ when $a \neq a_0$. Since $T_{0,b}=0$
for any $b,$ it follows that $T_{a,b} = a$ for any $a < a_0$. In
particular, 
\[ T_{a_0-1, b} = a_0-1, \qquad \fall b.\]
Since the entries in the $a$th row are weakly increasing, we must have
\begin{equation}\label{equ:compdiff3} T_{a_0,b} - T_{a_0-1,b} \ge
  T_{a_0,b-1}-T_{a_0-1,b-1}, \qquad \fall b.\end{equation}  
Thus it follows from Lemma \ref{lem:compdiff} that the equality of
\eqref{equ:compdiff3} holds when $b \neq b_0$ and inequality holds
when $b = b_0$. 
Finally, it follows from \eqref{equ:=1} that condition (ii)(a) holds
for $a=a_0$ and $b< b_0$ and thus condition (b) holds, completing the
proof.  
\end{proof}

\begin{proof}[Proof of Proposition \ref{prop:Phin}]
  We first verify the first part of the conclusion of the proposition,
  which is equivalent to say that $\psi$ gives a bijection from
  $\Phi_n$ to $Q_n.$ Because of Corollary 
  \ref{cor:compdiff2}, it is sufficient to show that given $a+b \le
  n-1$ (which implies that $(a,b)$ is an entry in a tableau of shape
  $\delta_{n-1})$, we have $k \le b$ if and only if all the entries in
  $\Add(\cT_{n-1}^0; a, b, k)$ are at most $n-1$. However, note that
  the entries in $\cT_{n-1}^0$ are less than $n-1$ and the largest
  entry in $\Add(\cT_{n-1}^0; a, b, k)$ that is different from
  $\cT_{n-1}^0$ is the last entry in the $b$th column of
  $\Add(\cT_{n-1}^0; a, b, k)$: 
  \[ \Add(\cT_{n-1}^0; a, b, k)_{n-1-b, b} = n-1-b + k.\]
  Therefore each entry in $\Add(\cT_{n-1}^0; a, b, k)$ is at most
  $n-1$ if and only if $n-1-b + k \le n-1,$ which is equivalent to $k
  \le b$. Thus the map $(a,b,k) \mapsto \Add(\cT_{n-1}^0; a, b, k)$
  induces a bijection from $\Phi_n$ to $Q_n$. 

  It is easy to see that $\Add(\cT_{n-1}^0; a, b, k) \le \Add(\cT_{n-1}^0; a', b', k')$ if and only if $a \ge a', b
  \ge b', k \le k'$. Hence we get an isomorphism, as desired.
\end{proof}

We have the following corollary to Proposition \ref{prop:Phin} which will be used later. 
\begin{cor}
For $n \ge 2,$
  \begin{equation}\label{equ:diffQn}
  \# Q_{n+1} - \# Q_{n} = \binom{n}{2}.
\end{equation}
\end{cor}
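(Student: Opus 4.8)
The plan is to count $\#Q_n$ directly using the explicit description of $Q_n$ given by Proposition~\ref{prop:Phin}. That proposition provides a poset isomorphism $\psi\colon \Phi_n \to Q_n$, and in particular a bijection, so it suffices to count $\#\Phi_n$, where
\[ \Phi_n = \{(a,b,k)\in\P^3 \st 1\le k\le b\le n-1-a\}. \]
First I would peel off the $k$-coordinate: for fixed $(a,b)$ with $1\le b\le n-1-a$, the number of valid $k$ is exactly $b$. Hence
\[ \#\Phi_n = \sum_{a\ge 1}\ \sum_{b=1}^{n-1-a} b = \sum_{a=1}^{n-2} \binom{n-a}{2}. \]
Reindexing the outer sum with $j=n-a$ (so $j$ runs from $2$ to $n-1$) gives $\#Q_n = \#\Phi_n = \sum_{j=2}^{n-1}\binom{j}{2} = \binom{n}{3}$ by the hockey-stick identity.

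With the closed form $\#Q_n = \binom{n}{3}$ in hand, the corollary is immediate:
\[ \#Q_{n+1} - \#Q_n = \binom{n+1}{3} - \binom{n}{3} = \binom{n}{2}, \]
again by the Pascal/hockey-stick relation $\binom{n+1}{3} = \binom{n}{3}+\binom{n}{2}$. Alternatively, one can avoid computing $\#Q_n$ outright and instead exhibit a bijection between $\Phi_{n+1}\setminus\Phi_n$ (viewing $\Phi_n\subseteq\Phi_{n+1}$ as the triples with $a\ge 2$, say, after a shift) and a set of size $\binom{n}{2}$; but the direct summation is cleaner and shorter.

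There is essentially no obstacle here: the only thing to be slightly careful about is the bookkeeping of the index ranges — making sure the constraint $b\le n-1-a$ together with $a,b\ge 1$ forces $1\le a\le n-2$, and that the inner sum $\sum_{b=1}^{n-1-a}b = \binom{n-a}{2}$ is applied with the right upper limit. Everything then reduces to a one-line application of the hockey-stick identity, which is standard (see \cite[Chapter~1]{ec1}). Since Proposition~\ref{prop:Phin} is already proved in the excerpt, this corollary requires nothing further.
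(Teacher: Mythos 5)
Your proposal is correct and follows essentially the same route as the paper: both count $\#Q_n=\#\Phi_n$ directly from the description in Proposition~\ref{prop:Phin} as a sum of binomial coefficients and then take the difference. The only cosmetic difference is the order of summation (you fix $a$ and sum $\binom{n-a}{2}$, the paper fixes $k$ and sums $\binom{n-k}{2}$, which is the same sum) and that you go one step further to the closed form $\#Q_n=\binom{n}{3}$ before subtracting, whereas the paper reads off the difference as the last term of the sum.
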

\begin{proof}
  \[ \# Q_{n} = \sum_{k=1}^{n-2} \sum_{b=k}^{n-2} (n-1-b) = \sum_{k=1}^{n-2} \binom{n-k}{2} = \sum_{\alpha=1}^{n-2} \binom{\alpha+1}{2}.\]
\end{proof}

Since we've shown that $Q_n \cong \Phi_n$ in Proposition \ref{prop:Phin}, we establish that $P_n\cong Q_n$ by showing $P_n \cong \Phi_n.$

\begin{prop} \label{prop:pnqn} 
  Let $\Phi_n$ be the poset defined in Proposition \ref{prop:Phin}. Define a map $\varphi\colon \Phi_n\to P_n$ by
  $$ \varphi(a, b, k)= (k,n-b,n+1-a). $$
Then $\varphi$ is an isomorphism of posets.

Therefore, $P_n \cong \Phi_n \cong Q_n.$
\end{prop}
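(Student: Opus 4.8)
**The plan is to verify directly that $\varphi$ is a well-defined bijection and that both $\varphi$ and $\varphi^{-1}$ are order-preserving.**

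First I would check that $\varphi$ lands in $P_n$. Recall $P_n$ consists of triples $(i,j,k)\in[n]^3$ with $i+j<n+1<j+k$. Given $(a,b,k)\in\Phi_n$, so $1\le k\le b\le n-1-a$ with $a,b,k\ge 1$, set $(i',j',k')=\varphi(a,b,k)=(k,\,n-b,\,n+1-a)$. I would confirm the membership conditions: $i'+j' = k+(n-b) \le b+(n-b) = n < n+1$ using $k\le b$; and $j'+k' = (n-b)+(n+1-a) = n+1+(n-1-a-b) > n+1$ using $b\le n-1-a$, i.e.\ $a+b\le n-1$ which in fact gives $a+b \le n-1$ hence $n-1-a-b\ge 0$ — I should be careful here: $b\le n-1-a$ gives $a+b\le n-1$, so $j'+k'=n+1+(n-1-a-b)\ge n+1$, and strict inequality needs $a+b\le n-1$ to be... actually I need $j'+k'>n+1$, i.e.\ $a+b<n-1$? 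No: $j'+k' = 2n+1-a-b$, and $>n+1$ means $a+b<n$, which holds since $a+b\le n-1<n$. Good. Also I must check $1\le i',j',k'\le n$: $i'=k\ge 1$ and $k\le b\le n-1-a\le n-2<n$; $j'=n-b$, and $b\ge 1$ gives $j'\le n-1$, while $b\le n-1-a\le n-2$ gives $j'\ge 2\ge 1$; $k'=n+1-a$, and $a\ge 1$ gives $k'\le n$, while $a\le n-1-b\le n-2$ gives $k'\ge 3\ge 1$. So $\varphi$ is well-defined.

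Next I would produce the inverse explicitly. From $(i',j',k')=(k,n-b,n+1-a)$ we read off $k=i'$, $b=n-j'$, $a=n+1-k'$, so $\varphi^{-1}(i',j',k')=(n+1-k',\,n-j',\,i')$. I would check this sends $P_n$ back into $\Phi_n$: given $i'+j'<n+1<j'+k'$, set $(a,b,k)=(n+1-k',n-j',i')$; then $k=i'\ge 1$; $k\le b$ becomes $i'\le n-j'$, i.e.\ $i'+j'\le n$, which follows from $i'+j'<n+1$; and $b\le n-1-a$ becomes $n-j'\le n-1-(n+1-k') = k'-2$, i.e.\ $j'+k'\ge n+2$, i.e.\ $j'+k'>n+1$, which holds. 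Since the two maps are mutually inverse formulas, $\varphi$ is a bijection.

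Finally, the order comparison. In $\Phi_n$, $(a,b,k)\le_{\Phi_n}(a',b',k')$ means $a\ge a'$, $b\ge b'$, $k\le k'$. Applying $\varphi$: the first coordinate $k\le k'$; the second $n-b\le n-b'$ since $b\ge b'$; the third $n+1-a\le n+1-a'$ since $a\ge a'$. So all three coordinates of $\varphi(a,b,k)$ are $\le$ those of $\varphi(a',b',k')$, i.e.\ $\varphi(a,b,k)\le_{P_n}\varphi(a',b',k')$. The reverse implication is identical run through $\varphi^{-1}$, since each of the three coordinate conditions is individually reversible. Hence $\varphi$ is a poset isomorphism, and combined with Proposition~\ref{prop:Phin} we conclude $P_n\cong\Phi_n\cong Q_n$. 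I do not anticipate a genuine obstacle here; the only thing requiring care is bookkeeping the inequalities (and the strictness in $i'+j'<n+1<j'+k'$ versus the non-strict $k\le b\le n-1-a$), which I have traced above.
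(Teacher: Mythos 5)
Your proposal is correct and follows essentially the same route as the paper: a direct check that $\varphi$ lands in $P_n$, the explicit inverse $(i,j,\ell)\mapsto(n+1-\ell,\,n-j,\,i)$, and the coordinate-wise verification that both $\varphi$ and $\varphi^{-1}$ preserve order, after which $P_n\cong\Phi_n\cong Q_n$ follows from Proposition~\ref{prop:Phin}. Your inequality bookkeeping (including the strict-versus-weak issue, resolved by integrality) is sound and, if anything, slightly more detailed than the paper's own verification.
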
 

\begin{proof}
This is just a straightforward verification. First we check that
$\varphi(a,b,k)\in P_n$. We need to show that
  $$ 1\leq k \leq n,\ \ 1\leq n-b\leq n,\ \ 1\leq n+1-a\leq n $$ 
  $$ k+n-b<n+1<2n+1-a-b. $$
These inequalities are immediate from $(a, b, k) \in \Phi_n,$ i.e., $1 \le k \le b \le n-1-a$ and $a, b, k \in \P.$
We can then check that $\varphi^{-1}(i,j,\ell) =
(n+1-\ell,n-j,i)$. Hence $\varphi$ is a bijection $\Phi_n\to P_n$.

It remains to show that $\varphi$ is a poset isomorphism. However, one checks directly that $(a,b,k) \le_{\Phi_n} (a', b', k')$ if and only if $k \le k', n-b \le n-b', n+1-a \le n+1-a',$ i.e., $\varphi(a,b,k) \le_{P_n} \varphi(a',b',k').$
\end{proof}

Therefore, as we discussed before, Theorem \ref{thm:LnMn} follows from the above proposition. 

\begin{thm} \label{thm1}
For any $n \ge 2,$ the rank-generating function of $M_n$ is given by
\bea\label{equ:genMn}
  F(M_n,q) & = & (1+q)^{n-2}(1+q^2)^{n-3}\cdots (1+q^{n-2})\\ & = &
  \prod_{i=1}^{n-2} (1+q^i)^{n-1-i},  
\eea
where $F(M_2,q) = 1$.
Hence we have $$f(n)=2^{\binom{n-1}{2}}. $$
\end{thm}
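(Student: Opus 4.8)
The plan is to compute the rank-generating function $F(M_n,q)$ directly from the structure of the poset $Q_n$ of join-irreducibles, using the standard fact that for a finite distributive lattice $M=J(Q)$, the rank of an order ideal equals its cardinality, so $F(M_n,q)=\sum_{I}q^{\#I}$ where $I$ ranges over order ideals of $Q_n$. By Proposition~\ref{prop:Phin} we may work with $\Phi_n$ in place of $Q_n$. The key observation is that $\Phi_n$ decomposes as a disjoint union of ``chains'' in a way that makes the ideal-counting generating function factor: for each pair $(a,b)$ with $1\le b\le n-1-a$, the elements $(a,b,1)<_{\Phi_n}\cdots$ are... no — rather, the right slicing is to fix the pair $(a,b)$ and note the $k$-coordinate runs over $1,\dots,b$, giving that $\Phi_n$ is built from the poset $\{(a,b)\st 1\le b\le n-1-a\}$ (ordered by $a\ge a', b\ge b'$) by replacing each element $(a,b)$ with a chain of length $b$.

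First I would set up the recursion. Comparing $M_n$ and $M_{n-1}$: the shape $\delta_{n-1}=(n-2,n-3,\dots,1)$ is obtained from $\delta_{n-2}=(n-3,\dots,1)$ by adding a new first row of length $n-2$ (and shifting), and correspondingly $\Phi_n$ contains $\Phi_{n-1}$ (suitably embedded) together with the new join-irreducibles coming from entries in the new row, of which there are $\#Q_n-\#Q_{n-1}=\binom{n-1}{2}$ by \eqref{equ:diffQn}. The cleanest approach, though, is to exhibit $\Phi_n$ (equivalently $Q_n$) as an ordinal-type sum that yields a product formula for the ideal generating function: I claim $\Phi_n$ is isomorphic to a poset whose ideal generating function is $\prod_{i=1}^{n-2}(1+q^i)^{n-1-i}$. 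Concretely, the join-irreducibles split by the value of $b$ (the column): for each fixed $b\in\{1,\dots,n-2\}$ there are $n-1-b$ choices of $a$ (namely $a=1,\dots,n-1-b$) and $b$ choices of $k$, and one checks that within the block of fixed $b$ the poset is a disjoint union of $n-1-b$ chains each of length $b$ — and moreover these blocks, across different $b$, interact only through a structure that still lets the ideal count factor as a product over $(b,a)$ of $(1+q+\cdots+q^{\,?})$...

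Here the honest and safest route is induction on $n$ using the product formula as the induction hypothesis, which is what I expect the authors do. The base case $F(M_2,q)=1$ is the empty product. For the inductive step, I would show that passing from $M_{n-1}$ to $M_n$ multiplies the rank-generating function by $(1+q)(1+q^2)\cdots(1+q^{n-2})$. This should follow from a bijective or generating-function argument on order ideals: an order ideal of $\Phi_n$ restricts to an order ideal of the sub-poset isomorphic to $\Phi_{n-1}$, and the fiber of this restriction map — the ways to extend across the $\binom{n-1}{2}$ new elements — contributes exactly the factor $\prod_{i=1}^{n-2}(1+q^i)$, because the new elements (those with the new row index) form, over the already-chosen ideal below, a disjoint union of chains of lengths $1,2,\dots,n-2$ whose ``on/off up to some height'' choices are independent and give $\sum_{j=0}^{i}q^{?}$; getting the exponents to collapse to $(1+q^i)$ rather than a longer $q$-integer is the point where the precise combinatorics of reducibility/the shape $\delta_{n-1}$ must be used. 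Alternatively, and perhaps more transparently, I would invoke the well-known principal-specialization / hook-content style evaluation: $F(M_n,q)$ is the generating function $\sum_{T}q^{|T|-|\cT_{n-1}^0|}$ over SSYT of shape $\delta_{n-1}$ with entries in $\{1,\dots,n-1\}$, and the product $\prod_{i=1}^{n-2}(1+q^i)^{n-1-i}$ matches the known principal specialization of the Schur function $s_{\delta_{n-1}}(1,q,\dots,q^{n-2})$ after normalizing by the minimal tableau; indeed $s_{\delta_{n-1}}$ is (up to a power of $q$) a product of $q$-integers by the hook-content formula, and the staircase shape makes those $q$-integers telescope into the stated product of $(1+q^i)$ factors.

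The main obstacle is precisely this last collapse: a priori the hook-content formula for $s_{\delta_{n-1}}(1,q,\dots,q^{n-2})$ produces a ratio $\prod q^{\text{content}} \cdot \prod [n-1+c(u)]_q / [h(u)]_q$ of $q$-integers, and one must check that for the staircase $\delta_{n-1}$ this ratio simplifies — numerator and denominator cancel heavily — leaving exactly $\prod_{i=1}^{n-2}(1+q^i)^{n-1-i}$. I would verify this by matching hooks and contents cell-by-cell along diagonals of the staircase (the hooks of $\delta_{n-1}$ are all \emph{odd}, and the contents are symmetric, which is exactly what forces each surviving $[2j]_q/[j]_q=1+q^j$). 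Once the collapse is confirmed, setting $q=1$ gives $\prod_{i=1}^{n-2}2^{\,n-1-i}=2^{\sum_{i=1}^{n-2}(n-1-i)}=2^{\binom{n-1}{2}}$, so $f(n)=\#M_n=2^{\binom{n-1}{2}}$ by Corollary~\ref{cor:useMn}(a), completing the proof.
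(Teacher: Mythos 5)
Your final strategy---evaluating the principal specialization of the staircase Schur function---is a legitimate route, and it is exactly the alternative the paper mentions in the Note after its proof; but it is not what the paper does, and your handling of the decisive step is where the proposal breaks down. The paper instead quotes the product formula $s_{(n-2,n-3,\dots,1)}(x_1,\dots,x_{n-1})=\prod_{1\le i<j\le n-1}(x_i+x_j)$ (\cite[Exer.~7.30(a)]{ec2}), notes that $q^{\binom n3}F(M_n,q)=s_{(n-2,\dots,1)}(q,q^2,\dots,q^{n-1})$ because the rank of a tableau is its entry sum minus $\binom n3$, and then the substitution $x_i=q^i$ gives $\prod_{i<j}q^i\bigl(1+q^{j-i}\bigr)$, i.e.\ $q^{\binom n3}\prod_{d=1}^{n-2}(1+q^d)^{n-1-d}$ on the nose; setting $q=1$ yields $f(n)=2^{\binom{n-1}{2}}$. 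So the ``collapse'' you identify as the main obstacle is dissolved entirely by choosing the right formula for the staircase Schur function, rather than proved.

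On your hook-content version, the verification you sketch does not work as stated. Writing $[k]_q=1+q+\cdots+q^{k-1}$, you cannot match hooks and contents cell by cell (or diagonal by diagonal) so that each cell contributes $[2j]_q/[j]_q=1+q^j$: already for $n=4$ (shape $(2,1)$, $m=3$) the cell $(1,1)$ has $m+c(u)=h(u)=3$ and contributes $[3]_q/[3]_q=1$, while the cell $(1,2)$ contributes $[4]_q/[1]_q=(1+q)(1+q^2)$, two factors from a single cell; and for $n=5$ the content-$0$ diagonal contributes $[4]_q^2/\bigl([5]_q[1]_q\bigr)$, which is not even a polynomial, so the $[5]_q$ must cancel against a numerator coming from a different diagonal. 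The needed cancellation is thus a global multiset identity between the numbers $n-1+c(u)$ and the (all odd) hook lengths of the staircase; it is true and can be finished by counting the exponent of each $[k]_q$ on both sides, but that is genuinely more work than your sketch acknowledges, and as written the proposal leaves its crux unproved. Separately, the first half of your write-up (the chain decomposition of $\Phi_n$ and the inductive step that should contribute a factor $\prod_{i=1}^{n-2}(1+q^i)$) is abandoned midstream and carries no weight; either complete it or delete it. Once the product evaluation is in place by either route, your final step---setting $q=1$ to get $\prod_{i=1}^{n-2}2^{\,n-1-i}=2^{\binom{n-1}{2}}$ and invoking Corollary~\ref{cor:useMn}(a)---is correct.
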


\begin{proof}
We compute $F(M_n, q)$ using standard results from the theory of
symmetric functions. The rank of an element (SSYT) in $M_n$ is the sum 
of its entries minus $\binom{n}{3}$. Denote the rank-generating
function of $M_n$ by $F(M_n,q)$. 
If $T$ is an SSYT with $m_i$
entries equal to $i$, write $x^T=x_1^{m_1}x_2^{m_2}\cdots$. The Schur
function $s_{\delta_{n-2}}(x_1,\dots,x_{n-1})$ may be defined (see
\cite[Def.~7.10.1]{ec2}) as
  $$ s_{\delta_{n-2}}(x_1,\dots,x_{n-1})=\sum_T x^T, $$
where $T$ ranges over all SSYT of shape $\delta_{n-2}$ and largest
part at most $n-1$. Hence
  $$ q^{\binom n3}F(M_n,q) = s_{\delta_{n-2}}(q,\dots,q^{n-1}). $$
Now we have \cite[Exer.~7.30(a)]{ec2} 
  \beq s_{\delta_{n-2}}(x_1,\dots,x_{n-1})=\prod_{1\leq i<j\leq n-1}
       (x_i+x_j). \label{eq:sprod} \eeq 
From this equation it is immediate that
  $$ F(M_n,q) = (1+q)^{n-2}(1+q^2)^{n-3}\cdots (1+q^{n-2}). $$
Setting $q=1$ gives $f(n)=\#L_n=\#M_n= 2^{\binom{n-1}{2}}$, completing
the proof. 
\end{proof}

\textsc{Note.} Rather than using the special formula \eqref{eq:sprod}
we could have used the hook-content formula \cite[Thm.~7.21.2]{ec2}
for $s_{\lambda}(1,q,\dots,q^{m-1})$, valid for any $\lambda$ and $m$. (Note
that if $\lambda$ is a partition of $N$, then $q^N
s_{\lambda}(1,q,\dots,q^{m-1}) =s_{\lambda}(q,q^2,\dots,q^m)$.)
Equation~\eqref{eq:sprod} can be translated into some enumerative
property of valid subsets of $\binom{[n]}{3}$, but it seems rather
contrived.

\section{Valid subsets of maximum size}

In the rest of the paper, we will prove Elkies' conjecture on the
formula \eqref{equ:g} for $g(n)$ as well as provide another proof for
his formula \eqref{equ:sigma} for $\sigma(n)$. Recall that in Corollary
\ref{cor:useMn} we give alternative definitions for $\sigma(n)$ and
$g(n)$ in terms of $M_n$. We find it is convenient to 
use the following obvious lemma to describe tableaux in $M_n$ using
inequalities.  
\begin{lem}\label{lem:Mnentry} 
Suppose $T$ is a tableau of shape $\delta_{n-1}$ with integer
entries. Then $T \in M_n$ if and only if the entries of $T$ satisfy
the following conditions:  
	\be
\itm[\emph{(a)}] $T_{1,1} \ge 1$. 
\itm[\emph{(b)}] $T_{a,b} - T_{a,b-1} \ge 0,$ for any $2 \le b \le n-2,\ 1\le
a \le n-1-b$ (weakly increasing on rows) 
\itm[\emph{(c)}] $T_{a,b} - T_{a-1,b} \ge 1,$ for any $2 \le a \le n-2,\ 1\le
b \le n-1-a$ (strictly increasing on columns) 
\itm[\emph{(d)}] $T_{n-1-b,b} \le n-1,$ for any $1 \le b \le n-2$. 
	\ee
\end{lem}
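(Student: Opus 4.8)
The plan is to simply unwind the definition of $M_n$: recall that $T\in M_n$ means $T$ is an SSYT of shape $\delta_{n-1}=(n-2,n-3,\dots,1)$ (so row $a$ has $n-1-a$ boxes and column $b$ has boxes in rows $1,\dots,n-1-b$) all of whose entries lie in $\{1,\dots,n-1\}$. Thus $T\in M_n$ iff (i) the entries of $T$ are positive integers, (ii) $T$ is weakly increasing along each row, (iii) $T$ is strictly increasing down each column, and (iv) every entry of $T$ is at most $n-1$. I will show this list of four conditions is equivalent to conditions (a)--(d) of the lemma.

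For the forward direction, assume $T\in M_n$. Then (b) is exactly (ii), and (c) is exactly (iii) after noting that entries are integers, so ``strictly increasing'' down a column is the same as a difference $\ge 1$. Condition (a) holds because $T_{1,1}$ is a positive integer. For (d), observe that since column $b$ increases down its length, its largest entry is the bottom one, $T_{n-1-b,b}$; as this entry is at most $n-1$ by (iv), condition (d) follows.

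For the converse, assume (a)--(d). Conditions (b) and (c) give (ii) and (iii) directly. For positivity (i): by (c) and induction down column $1$ we get $T_{a,1}\ge T_{1,1}+(a-1)\ge a\ge 1$ using (a), and then by (b) every entry in row $a$ is $\ge T_{a,1}\ge 1$; hence all entries are positive integers. For the upper bound (iv): fix any box $(a,b)$ of $T$; by (c), descending column $b$ the entries only increase, so $T_{a,b}\le T_{n-1-b,b}\le n-1$ by (d). Thus all four defining conditions hold and $T\in M_n$.

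I do not anticipate a genuine obstacle here; the statement is essentially bookkeeping. The only points requiring a word of care are the two monotonicity reductions — that checking the bottom entry of each column suffices for the upper bound, and that $T_{1,1}\ge 1$ together with column-strictness and row-weak-monotonicity forces positivity of every entry — both of which are immediate from (b) and (c).
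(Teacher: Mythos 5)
Your proof is correct. The paper in fact states this lemma without proof, calling it ``obvious,'' and your argument is exactly the intended bookkeeping: unwinding the definition of an SSYT of shape $\delta_{n-1}$ with largest part at most $n-1$, with the only non-vacuous checks being that column-strictness plus (a) forces positivity of all entries and that (d) bounds every entry via the bottom entry of its column. Your index ranges in (b) and (c) match the shape correctly, so nothing is missing.
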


\begin{rem}\label{rem:ssyt}
  For convenience, we sometimes abbreviate conditions (a)~--~(c) of
  Lemma~\ref{lem:Mnentry} as: $\fall 1 \le b \le n-2, \ 1 \le a \le
  n-1-b,$
  \[ T_{a,b}- T_{a, b-1} \ge 0 \quad \text{ and } \quad T_{a,b}-T_{a-1,b} \ge 1, \qquad , \] 
with the convention $T_{a,0}=a$ and $T_{0,b}=0$.
\end{rem}

Since $\sigma(n)$ is the maximum possible number of reducible entries
in a tableau in $M_n,$ we first give an upper bound for the number of
reducible entries in $T \in M_n$.

\begin{lem}\label{lem:upb}
Let $T \in M_n$. Then for any $1 \le b \le n-2,$ 
\[ \# \text{reducible entries in the $b$th column of $T$} \le \min(b,
n-1-b).\]
Therefore,
\[ \# \text{reducible entries in $T$} \le \sum_{b=1}^{n-2} \min(b, n-1-b).\]
\end{lem}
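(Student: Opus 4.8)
The plan is to prove the per-column bound first, since the global bound follows by summing over $b$. Fix a column index $b$ with $1 \le b \le n-2$. The $b$th column has entries $T_{1,b}, T_{2,b}, \dots, T_{n-1-b, b}$, so it has exactly $n-1-b$ entries; this immediately gives one half of the bound $\min(b, n-1-b)$, since a column cannot contain more reducible entries than it has entries. So the real content is to show that the number of reducible entries in column $b$ is also at most $b$.

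For the bound by $b$, I would use the characterization of reducibility from Note~\ref{note:red}: the $(a,b)$-entry is reducible iff $T_{a,b} - T_{a,b-1} \ge 1$ and $T_{a,b} - T_{a-1,b} \ge 2$, with the conventions $T_{a,0} = a$ and $T_{0,b} = 0$. The key idea is to compare column $b-1$ with column $b$ via the ``difference sequence'' $d_a := T_{a,b} - T_{a,b-1}$ for $1 \le a \le n-1-b$ (and $d_0 := T_{0,b} - T_{0,b-1} = 0$; note also that for $a = n-b$ the entry $T_{n-b, b}$ doesn't exist but $T_{n-b, b-1}$ does, so one should be slightly careful at the bottom). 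The point is that if $(a,b)$ is reducible, then $T_{a,b} - T_{a-1,b} \ge 2$ while $T_{a,b-1} - T_{a-1, b-1} \ge 1$ (column-strictness of the SSYT in column $b-1$), so $d_a = (T_{a,b} - T_{a-1,b}) + (T_{a-1,b-1} - T_{a,b-1}) + d_{a-1}$, which forces $d_a \ge d_{a-1} + 1$, i.e., the difference sequence strictly increases as we pass a reducible row. Since $d_0 = 0$, if there were $b+1$ reducible entries in column $b$ then $d_{n-1-b} \ge b+1$; but I claim $d_a \le b$ for every $a$: indeed $T_{a,b}$ is at most $n-1$ (condition (d), or more simply $T_{a,b} \le T_{n-1-b,b} \le n-1$) and $T_{a,b-1} \ge (n-1-b) \dots$ — here I need a lower bound on $T_{a,b-1}$. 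Actually the cleanest route is: $T_{a,b-1} \ge T_{a,1} \ge T_{a,0} + 0 = a$ is too weak; instead use strict column growth upward-to-downward differently. Let me instead bound $d_a$ directly: $d_a = T_{a,b} - T_{a,b-1}$, and since $T$ restricted to columns $1, \dots, b$ in row $a$ is weakly increasing with $b-1$ steps from column $1$ to column $b$... the honest bound is $d_a = T_{a,b} - T_{a,b-1} \le (n-1) - a$ combined with reducibility forcing enough strictness; but the robust statement is simply that $d_a$ cannot exceed $b$ because, tracing back, each of the $b-1$ columns to the left plus the SSYT constraints cap it — I would organize this as: the $d$-sequence is nonnegative, starts at $d_0 = 0$, increases by at least $1$ at each reducible row, and its maximum value $d_{n-1-b}$ satisfies $d_{n-1-b} = T_{n-1-b, b} - T_{n-1-b, b-1} \le (n-1) - T_{n-1-b,b-1}$, and $T_{n-1-b, b-1} \ge (n-1-b)$ since column $b-1$ is strictly increasing and has its $(n-1-b)$th entry at least... hmm, $\ge (n-1-b)$ would need $T_{1,b-1} \ge 1$ plus $n-2-b$ strict steps — wait, but $T_{n-1-b, b-1}$ might be further down column $b-1$ which has $n-b$ entries total. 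In any case $T_{n-1-b,b-1} \ge n-1-b$ follows from $T_{1,b-1} \ge 1$ and column-strictness, giving $d_{n-1-b} \le (n-1) - (n-1-b) = b$. Hence at most $b$ strict increases occurred, so at most $b$ reducible rows in column $b$.

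Then the global bound is immediate: $\#\{\text{reducible entries in } T\} = \sum_{b=1}^{n-2} \#\{\text{reducible entries in column } b\} \le \sum_{b=1}^{n-2} \min(b, n-1-b)$.

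The main obstacle I anticipate is the bookkeeping at the two ends of each column — handling the conventions $T_{0,b} = 0$, $T_{a,0} = a$ correctly, and in particular being careful that $T_{n-b,b}$ is not an actual entry while $T_{n-b,b-1}$ is, so that the ``difference sequence'' argument relating column $b$ to column $b-1$ is set up on the right index range and the telescoping/counting of strict increases is valid. Once the indexing is pinned down, the estimate $0 = d_0 \le d_a \le b$ together with ``$d$ jumps by at least $1$ at each reducible row'' is a clean pigeonhole, and the rest is the trivial observation that column $b$ has only $n-1-b$ entries.
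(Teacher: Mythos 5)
Your trivial bound ($n-1-b$ entries per column) and the final summation over $b$ are fine, but the heart of the argument --- ``the difference sequence $d_a = T_{a,b}-T_{a,b-1}$ increases by at least $1$ at each reducible row'' --- is false, and the error is visible in your own derivation. From the identity $d_a - d_{a-1} = (T_{a,b}-T_{a-1,b}) - (T_{a,b-1}-T_{a-1,b-1})$, reducibility gives $T_{a,b}-T_{a-1,b}\ge 2$, but to conclude $d_a\ge d_{a-1}+1$ you need an \emph{upper} bound $T_{a,b-1}-T_{a-1,b-1}\le 1$ on the vertical step in column $b-1$; column-strictness only gives the lower bound $\ge 1$, which points the wrong way. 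Concretely, take $n=7$ and $T\in M_7$ with columns (top to bottom) $(1,2,3,4,5)$, $(1,4,5,6)$, $(3,5,6)$, $(3,6)$, $(3)$. Then $(2,3)$ is reducible ($T_{2,3}-T_{1,3}=2$, $T_{2,3}-T_{2,2}=1$), yet for column $b=3$ one has $d_1=3-1=2$ and $d_2=5-4=1$, so $d$ \emph{decreases} across a reducible row. A second, independent problem: even if the jump claim were true, bounding the number of reducible rows by the net change $d_{n-1-b}-d_0\le b$ would also require $d$ to be weakly increasing at the non-reducible rows, which likewise fails in general. So the pigeonhole as set up does not go through.

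The repair is to telescope \emph{vertically within column $b$ itself} rather than horizontally against column $b-1$, which is exactly the paper's proof: with the convention $T_{0,b}=0$, Lemma \ref{lem:Mnentry}(d) gives $n-1 \ge T_{n-1-b,b} = \sum_{a=1}^{n-1-b}\bigl(T_{a,b}-T_{a-1,b}\bigr)$, where every summand is at least $1$ (Remark \ref{rem:ssyt}) and every summand at a reducible entry is at least $2$ (Note \ref{note:red}); hence $n-1 \ge (n-1-b) + \#\{\text{reducible entries in column } b\}$, i.e.\ at most $b$ reducible entries in column $b$. Combined with your trivial bound $n-1-b$ and summing over $b$, this yields the lemma.
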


\begin{proof}
First, the number of reducible entries in the $b$th column is at most
the number of entries in the $b$th column, which is $n-1-b$.  

By Lemma \ref{lem:Mnentry}(d), 
the last entry $T_{n-1-b,b}$ in the $b$th column satisfies 
\begin{align}
  n-1 \ge& \quad T_{n-1-b,b} = \sum_{a=1}^{n-1-b} \left(
  T_{a,b}-T_{a-1,b} \right) \label{equ:ineq2}. \end{align}
Hence by Remark \ref{rem:ssyt} and Note \ref{note:red},
 \begin{align}
   n-1 \ge& \quad \# \left(\text{irreducible entries in the $b$th
     column of $T$}\right)+ \label{equ:ineq1}\\  
	& \quad 2 \times \# \left(\text{reducible entries in the $b$th
    column of $T$}\right) \nonumber \\  
=& \quad \# \left(\text{entries in the $b$th column of $T$}\right)+
\nonumber \\ 
 & \quad \# \left(\text{reducible entries in the $b$th column of
  $T$}\right) \nonumber\\ 
=& \quad n-1-b+ \# \left(\text{reducible entries in the $b$th column
  of $T$}\right). \nonumber  
\end{align}
Then we conclude that the number of reducible entries in the $b$th
column is at most $b$. 
\end{proof}

\begin{defn}
	Let $K_n$ be the coordinate-wise partial ordering on the set
        of all the tableaux in $M_n$ that have $\sum_{b=1}^{n-2}
        \min(b, n-1-b)$ reducible entries. (Thus $K_n$ is a
        subposet of $M_n$.) 
\end{defn}

\begin{rem}\label{rem:elkies}
	By Lemma \ref{lem:upb}, we see that $\sigma(n)$ is at most 
	\[\sum_{b=1}^{n-2} \min(b, n-1-b) =\begin{cases}
		1 + 2 + \cdots + (m-1) + (m-1) + \cdots +2 + 1\\ \ \ =
                m(m-1),\qquad \text{if $n=2m$}; \\ 
		1 + 2 + \cdots + (m-1) + m + (m-1) + \cdots +2 +
                1\\ \ \ = m^2,\qquad\quad\quad \text{if
                  $n=2m+1$}.  
	\end{cases}\]

One only needs show that $K_n$ is nonempty to confirm Elkies' formula
\eqref{equ:sigma} for $\sigma(n)$. Although one can easily directly
construct a tableau that is in $K_n,$ we choose to start by analyzing
the properties of tableaux in $K_n$ and give a proof for the
nonemptyness of $K_n$ indirectly in the next section. The benefit of
doing this is that the arguments are useful for figuring out the
cardinality of $K_n$, which gives the value of $g(n)$.
\end{rem}

\subsection*{Properties of tableaux in $K_n$}

We have the following immediate consequence of Lemma \ref{lem:upb} and
its proof. 
\begin{lem}\label{lem:charKn}
Suppose $T \in M_n$. Then 
$T \in K_n$ if and only if the following two conditions are satisfied.
 \be
	\itm[\emph{(a)}]\label{itm:left} For any $1 \le b \le
        \nmo,$ the 
        number of reducible entries in the $b$th column of $T$ is $b$.
      	\itm[\emph{(b)}] \label{itm:right} For any $\nmo < b \le
          n-2,$ all the entries in the $b$th column of $T$ are
          reducible.  
 \ee
\end{lem}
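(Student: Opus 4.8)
The statement to prove is Lemma~\ref{lem:charKn}, which characterizes membership in $K_n$ via a column-by-column count of reducible entries. The plan is to extract the content directly from the proof of Lemma~\ref{lem:upb} rather than starting from scratch. Recall that Lemma~\ref{lem:upb} shows each column $b$ contains at most $\min(b, n-1-b)$ reducible entries, and that $T \in K_n$ means $T$ achieves the global maximum $\sum_{b=1}^{n-2}\min(b,n-1-b)$ of the total number of reducible entries. Since the global bound is a sum of the per-column bounds, $T$ attains it if and only if it attains the bound $\min(b,n-1-b)$ in \emph{every} column $b$; this is the elementary observation that a sum of terms, each at most its individual upper bound, equals the sum of those upper bounds exactly when each term is maximal.

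So the first step is to note that $T \in K_n$ if and only if, for each $1 \le b \le n-2$, the number of reducible entries in the $b$th column equals $\min(b, n-1-b)$. The second step is to split into the two ranges according to which term in $\min(b,n-1-b)$ is smaller. For $1 \le b \le \frac12(n-1)$ we have $b \le n-1-b$, so $\min(b,n-1-b) = b$, and attaining it means exactly $b$ reducible entries in column $b$ — this is condition (a). For $\frac12(n-1) < b \le n-2$ we have $n-1-b < b$, so $\min(b,n-1-b) = n-1-b$, which equals the total number of entries in column $b$ (by the shape $\delta_{n-1}$), and attaining it means every entry in that column is reducible — this is condition (b). One should take a moment to handle the boundary: when $n$ is odd, $b = \frac{n-1}{2}$ has $b = n-1-b$ so both descriptions agree there and it is consistent to place it in case (a); the strict inequality in condition (b) is exactly what makes the two conditions partition the columns without overlap or gap.

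I do not anticipate any real obstacle here: the lemma is explicitly flagged in the paper as "an immediate consequence of Lemma~\ref{lem:upb} and its proof," and the argument is a two-line combinatorial observation plus a case split on the parity/size of $b$. The only thing that requires a sentence of care is confirming that in the second range the per-column bound $n-1-b$ literally coincides with the column length, so "maximum number of reducible entries" becomes "all entries reducible"; this follows because the $b$th column of a tableau of shape $\delta_{n-1} = (n-2, n-3, \dots, 1)$ has exactly $n-1-b$ cells. With that in hand, rephrasing "the count equals $\min(b,n-1-b)$ for all $b$" as the two stated conditions completes the proof.

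\begin{proof}
By Lemma~\ref{lem:upb}, for each $1 \le b \le n-2$ the number of reducible entries in the $b$th column of $T$ is at most $\min(b, n-1-b)$, and summing over $b$ gives that the total number of reducible entries in $T$ is at most $\sum_{b=1}^{n-2} \min(b, n-1-b)$. A sum of nonnegative integers, each bounded above by a prescribed value, equals the sum of those prescribed values if and only if each summand attains its bound. Hence $T \in K_n$ if and only if, for every $1 \le b \le n-2$, the $b$th column of $T$ contains exactly $\min(b, n-1-b)$ reducible entries.

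It remains to reformulate this column condition. If $1 \le b \le \frac12(n-1)$, then $b \le n-1-b$, so $\min(b, n-1-b) = b$; thus the condition on column $b$ is that it has exactly $b$ reducible entries, which is condition~(a). If $\frac12(n-1) < b \le n-2$, then $n-1-b < b$, so $\min(b, n-1-b) = n-1-b$; since the $b$th column of a tableau of shape $\delta_{n-1} = (n-2, n-3, \dots, 1)$ has exactly $n-1-b$ cells, the condition on column $b$ is that all of its entries are reducible, which is condition~(b). (When $n$ is odd and $b = \frac{n-1}{2}$ the two descriptions coincide, and $b$ falls under case~(a).) Therefore $T \in K_n$ if and only if conditions~(a) and~(b) both hold.
\end{proof}
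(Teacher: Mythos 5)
Your proof is correct and is exactly the argument the paper intends: the paper gives no separate proof, calling the lemma an immediate consequence of Lemma~\ref{lem:upb}, and your write-up just spells out that the total bound is attained iff each per-column bound $\min(b,n-1-b)$ is attained, with the case split on whether $b \le n-1-b$. The boundary and column-length checks you include are accurate.
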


While condition (b) of the above lemma is enough for us to determine
how to create the right half entries of tableaux in $K_n,$ we will
discuss explicit conditions for the left half entries as a consequence
of Lemma \ref{lem:charKn}(a) in several corollaries below.  

\begin{cor}\label{cor:charleftKn0}
Suppose $T \in K_n$. Then for any $1 \le b \le \nmo$,
\be
  \itm[\emph{(a)}] \label{itm:last} the last entry $T_{n-1-b, b}$ in the $b$th
  column of $T$ is $n-1;$  
  \itm[\emph{(b)}]\label{itm:diff} for any $1 \le a \le n-1-b,$ the entry
  $T_{a,b}$ is reducible if and only if $T_{a,b}-T_{a-1,b} = 2$, and
  $T_{a,b}$ is irreducible if and only if $T_{a,b}-T_{a-1,b} = 1$. 
\ee
Therefore, 
\be 
  \itm[\emph{(c)}] among all the $n-1-b$ entries $T_{a,b}$ in the
  $b$th column of 
  $T,$ there are $b$ entries satisfying $T_{a,b}-T_{a-1,b}=2,$ and the
  remaining $n-1-2b$ entries satisfying $T_{a,b} - T_{a-1,b}=1$. 
\ee
\end{cor}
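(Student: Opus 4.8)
The plan is to extract the equality conditions from the chain of inequalities that proved Lemma~\ref{lem:upb}, combined with the characterization of $K_n$ in Lemma~\ref{lem:charKn}(a). Fix $b$ with $1 \le b \le \nmo$. The $b$th column of $T$ has exactly $n-1-b$ entries (rows $1$ through $n-1-b$), so Lemma~\ref{lem:charKn}(a) tells us that exactly $b$ of them are reducible and hence exactly $n-1-2b$ are irreducible; note $n-1-2b \ge 0$ since $b \le \nmo$.

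Next I would recall the computation inside the proof of Lemma~\ref{lem:upb}: by Lemma~\ref{lem:Mnentry}(d),
\[ n-1 \;\ge\; T_{n-1-b,b} \;=\; \sum_{a=1}^{n-1-b} \bigl(T_{a,b}-T_{a-1,b}\bigr), \]
where each summand is at least $1$ by column strictness (Remark~\ref{rem:ssyt}) and each summand indexed by a reducible entry is at least $2$ by Note~\ref{note:red}. Since there are $b$ reducible entries and $n-1-2b$ irreducible ones, the right-hand side is at least $2b+(n-1-2b)=n-1$. Hence every inequality in sight is an equality. In particular $T_{n-1-b,b}=n-1$, which is part~(a); moreover $T_{a,b}-T_{a-1,b}=2$ at every reducible entry of the column and $T_{a,b}-T_{a-1,b}=1$ at every irreducible one.

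For part~(b), each entry $T_{a,b}$ is either reducible or irreducible, and the previous step supplies the two implications ``reducible $\Rightarrow$ difference $=2$'' and ``irreducible $\Rightarrow$ difference $=1$''. Because a single difference cannot equal both $1$ and $2$, these implications reverse, giving the two biconditionals claimed in~(b). Part~(c) is then just the tally: among the $n-1-b$ entries of the column, the $b$ reducible ones have difference $2$ and the remaining $n-1-2b$ have difference $1$.

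I do not expect a genuine obstacle; the only point needing care is that ``irreducible'' must force the column-difference to be exactly $1$ rather than merely excluding the reducible case (an irreducible entry could a priori fail reducibility via the row condition while still having column-difference $\ge 2$). This possibility is precisely what the equality analysis of the displayed sum eliminates, since any difference exceeding $1$ at an irreducible position would push the sum strictly above $n-1$.
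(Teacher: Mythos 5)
Your proposal is correct and follows essentially the same route as the paper: invoke Lemma~\ref{lem:charKn}(a) to get exactly $b$ reducible entries in the column, then force equality throughout the chain of inequalities from the proof of Lemma~\ref{lem:upb} (i.e.\ in \eqref{equ:ineq1} and \eqref{equ:ineq2}), which yields $T_{n-1-b,b}=n-1$ and the exact differences $2$ and $1$ at reducible and irreducible positions. You merely spell out the equality extraction and the reversal of the implications more explicitly than the paper does, including the correct observation that an irreducible entry with column-difference $\ge 2$ would push the sum above $n-1$.
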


\begin{proof}
  By Lemma \ref{lem:charKn}(a) the number of reducible
  entries in the $b$th column of $T$ is $b$.  
  However, by the proof of Lemma \ref{lem:upb}, one sees that this
  only happens when the equalities in both \eqref{equ:ineq1} and
  \eqref{equ:ineq2} hold. Therefore (a) and (b) follow, and then (c)
  follows. 
\end{proof}

\begin{cor}\label{cor:charleftKn1}
  Suppose $T \in K_n$. Then for any $1 \le b \le \nmo-1$ and $1 \le
  a \le n-1-b$ (so both $T_{a,b}$ and $T_{a-1, b+1}$ are on the left
  half of $T$), we have 
\begin{equation}\label{equ:diag}
T_{a, b} - T_{a-1, b+1} \le 1.
\end{equation}
\end{cor}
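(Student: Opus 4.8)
The plan is to argue by contradiction. Suppose $T\in K_n$ and suppose, for some $(a,b)$ with $1\le b\le\nmo-1$ and $1\le a\le n-1-b$, that $T_{a,b}-T_{a-1,b+1}\ge 2$; I will contradict the structure of $K_n$ recorded in Corollary~\ref{cor:charleftKn0}. Note that the hypothesis $b\le\nmo-1$ is precisely what is needed so that \emph{both} columns $b$ and $b+1$ lie in the range $1,\dots,\nmo$ to which Corollary~\ref{cor:charleftKn0} applies. First I would dispose of the boundary value $a=n-1-b$: then $T_{a,b}=T_{n-1-b,b}$ and $T_{a-1,b+1}=T_{n-2-b,b+1}$ are the bottom entries of columns $b$ and $b+1$, each equal to $n-1$ by Corollary~\ref{cor:charleftKn0}(a), so $T_{a,b}-T_{a-1,b+1}=0$ and there is nothing to prove. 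Thus I may assume $1\le a\le n-2-b$, which makes $T_{a,b+1}$ a genuine entry of $T$ lying in the left half.

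The core is a short ``double squeeze.'' Because $T$ is weakly increasing along rows, $T_{a-1,b+1}\ge T_{a-1,b}$ (with the convention $T_{0,c}=0$ this also handles $a=1$), and because column $b$ lies in the left half, Corollary~\ref{cor:charleftKn0}(b) forces every column-difference there to be $1$ or $2$, so $T_{a,b}-T_{a-1,b}\le 2$. Together with the standing assumption,
\[ 2 \;\le\; T_{a,b}-T_{a-1,b+1} \;\le\; T_{a,b}-T_{a-1,b} \;\le\; 2, \]
so all three inequalities are equalities; in particular $T_{a-1,b+1}=T_{a-1,b}$ and $T_{a,b}=T_{a-1,b+1}+2$. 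I then repeat the squeeze one column to the right: $T_{a,b+1}\ge T_{a,b}=T_{a-1,b+1}+2$ by row-monotonicity, while $T_{a,b+1}-T_{a-1,b+1}\le 2$ again by Corollary~\ref{cor:charleftKn0}(b) (now for column $b+1$); hence $T_{a,b+1}=T_{a-1,b+1}+2=T_{a,b}$. This determines $T_{a,b+1}$ in an impossible way: from $T_{a,b+1}-T_{a-1,b+1}=2$ and Corollary~\ref{cor:charleftKn0}(b) the entry $T_{a,b+1}$ is reducible, while from $T_{a,b+1}-T_{a,b}=0<1$ and Note~\ref{note:red} it is not reducible. This contradiction establishes $T_{a,b}-T_{a-1,b+1}\le 1$.

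I do not anticipate a serious obstacle; the only real idea is to examine the box one step to the right, $T_{a,b+1}$, and pit the fact ``it equals its left neighbour, hence is not reducible'' against ``its column-difference is $2$, hence it is reducible'' from Corollary~\ref{cor:charleftKn0}(b). What needs a bit of care is purely bookkeeping: handling the conventions $T_{0,c}=0$ and $T_{c,0}=c$ so that the case $a=1$ is covered, using the hypothesis $b\le\nmo-1$ (not merely $b\le\nmo$) to justify applying Corollary~\ref{cor:charleftKn0} to column $b+1$, and peeling off the trivial boundary value $a=n-1-b$ for which $T_{a,b+1}$ would lie outside the shape $\delta_{n-1}$.
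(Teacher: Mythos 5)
Your proof is correct and takes essentially the same route as the paper's: assume $T_{a,b}-T_{a-1,b+1}\ge 2$, show the entry $T_{a,b+1}$ exists and is forced to equal $T_{a,b}$ while having column-difference $2$, and then contradict the reducibility criterion of Corollary~\ref{cor:charleftKn0}(b) against Note~\ref{note:red}. The only cosmetic differences are your extra squeeze using column $b$ (the paper gets $T_{a,b+1}=T_{a,b}$ directly from the assumption plus Corollary~\ref{cor:charleftKn0}(b) on column $b+1$) and your disposal of the boundary $a=n-1-b$ via the bottom entries both equalling $n-1$, where the paper instead argues $a+b<n-1$.
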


\begin{proof}
  Assume to the contrary that 
  \begin{equation}\label{equ:diff1}
    T_{a, b} - T_{a-1, b+1} \ge 2.
  \end{equation}
  We claim that $a+b < n-1$. If $a=1,$ then $1+b \le \nmo < n-1;$
  if $a > 1$, then by Corollary \ref{cor:charleftKn0}(a) the
  condition $a 
  + b = n-1$ implies $T_{a,b}= n-1=T_{a-1,b+1}$, which is
  impossible. Thus $a+b < n-1,$ and so $a+(b+1) \le n-1$. Hence  $T$
  has an $(a, b+1)$-entry. Then by Corollary \ref{cor:charleftKn0}(b),  
\begin{equation}\label{equ:diff2}
T_{a,b+1} - T_{a-1, b+1} \le 2.
\end{equation}
Comparing with our assumption \eqref{equ:diff1}, we conclude that
$T_{a,b+1} \le T_{a, b}$. However, since $T$ is a SSYT, one has to
have  
\begin{equation}\label{equ:diff3}
  T_{a,b+1} = T_{a, b}.
\end{equation}
Thus both equalities in \eqref{equ:diff1} and \eqref{equ:diff2}
hold. In particular, we get $T_{a,b+1} - T_{a-1, b+1} =2$. Now using
Corollary \ref{cor:charleftKn0}(b) we conclude that $T_{a, b+1}$ is
reducible. However, by Note \ref{note:red} this implies that
$T_{a,b+1} - T_{a,b} \ge 1,$ which contradicts 
equation~\eqref{equ:diff3}. 
\end{proof}

It turns out that \eqref{equ:diag} is an important property. Since it
is related to the difference of two consecutive northeast-southwest
diagonal entries, we often refer to it as ``the diagonal
property.'' Below we give an easy but useful lemma for using this
property.   
\begin{lem}\label{lem:diag}
  Suppose $T$ is a tableau (of some shape) filled with integer entries. 
  Assume $b \ge 1$ and $a \ge 0$ and the $(a,b)$, $(a+1,b)$ and
  $(a,b+1)$-entries of $T$ satisfy 
  \begin{equation}\label{equ:two}
  T_{a+1, b} - T_{a, b+1} \le 1 \qquad \text{ and } \qquad T_{a+1, b} - T_{a,b} \ge 1.
\end{equation}
Then 
\[ T_{a,b} \le T_{a, b+1},\]
where the equality holds if and only if $T_{a+1, b} = T_{a, b} + 1 = T_{a, b+1}+1$.
\end{lem}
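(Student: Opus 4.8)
The statement to prove is Lemma~\ref{lem:diag}: given the two inequalities $T_{a+1,b}-T_{a,b+1}\le 1$ and $T_{a+1,b}-T_{a,b}\ge 1$, we must conclude $T_{a,b}\le T_{a,b+1}$, with equality characterized as stated. This is a purely arithmetic statement about four integers, so the plan is a short direct computation. The key idea is to \emph{add} the two hypotheses so that the ``diagonal'' entry $T_{a+1,b}$ cancels. Explicitly, from $T_{a,b+1}\ge T_{a+1,b}-1$ and $T_{a+1,b}\ge T_{a,b}+1$ we get $T_{a,b+1}\ge (T_{a,b}+1)-1 = T_{a,b}$, which is exactly the desired inequality. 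So the first step handles the inequality with essentially no work.

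For the equality case, I would argue as follows. Suppose $T_{a,b}=T_{a,b+1}$. Then the chain of inequalities $T_{a,b}\le T_{a+1,b}-1\le T_{a,b+1}-1+1 = T_{a,b+1} = T_{a,b}$... wait, more carefully: we have $T_{a,b}+1\le T_{a+1,b}$ and $T_{a+1,b}\le T_{a,b+1}+1 = T_{a,b}+1$, so both are equalities, giving $T_{a+1,b}=T_{a,b}+1$ and $T_{a+1,b}=T_{a,b+1}+1$. Conversely, if $T_{a+1,b}=T_{a,b}+1=T_{a,b+1}+1$ then clearly $T_{a,b}=T_{a,b+1}$. This gives the ``if and only if.'' Note that nothing about $T$ being an SSYT or having a particular shape is used — only that the three named entries exist and satisfy \eqref{equ:two}; the hypotheses $b\ge 1$, $a\ge 0$ are just there to ensure the entries make sense (with the usual convention for row $0$).

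There is essentially no obstacle here: the lemma is labeled ``easy but useful'' by the authors, and the only thing to be careful about is bookkeeping the direction of each inequality when adding them and when extracting the equality condition. I would write the proof as two or three lines: one display chaining $T_{a,b}+1\le T_{a+1,b}\le T_{a,b+1}+1$, then the observation that subtracting $1$ gives the inequality, and then the remark that equality throughout the chain is equivalent to $T_{a+1,b}=T_{a,b}+1=T_{a,b+1}+1$.
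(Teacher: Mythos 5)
Your proof is correct and is exactly the paper's argument: chain the two hypotheses into $T_{a,b}+1 \le T_{a+1,b} \le T_{a,b+1}+1$, read off the inequality, and note that equality in both links is precisely the stated equality condition.
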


This lemma says that the diagonal property together with the property
of strictly increasing on columns implies the property of weakly
increasing on rows. 

\begin{proof}
  We combine the two inequalities in \eqref{equ:two}: 
  \[ T_{a,b}+1 \le T_{a+1, b} \le T_{a, b+1} +1.\]
  Then the conclusion follows.
\end{proof}

\begin{cor}\label{cor:charleftKn2}
  Suppose $T \in K_n$. Then   
\begin{equation}\label{equ:=a}
T_{a, b} = a, \qquad \fall 1 \le b \le \nmo, \ 0 \le a \le \nmo-b.
\end{equation}
Therefore for any $1 \le b \le \nmo$ and $1 \le a \le \nmo-b,$
the entry $T_{a,b}$ is irreducible. Hence the first $\lfloor \nmo
\rfloor - b$ entries (not counting $T_{0,b}$) in the $b$th column are
irreducible for any $1 \le b \le \lfloor \nmo \rfloor$. 
\end{cor}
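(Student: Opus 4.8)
The plan is to prove the displayed identity \eqref{equ:=a} by a single induction on the row index $a$ (treating all columns $b$ at once), and then to read the ``therefore'' and ``hence'' sentences directly off Corollary \ref{cor:charleftKn0}(b). The engine of the inductive step is precisely the mechanism packaged in Lemma \ref{lem:diag}: the diagonal property of Corollary \ref{cor:charleftKn1} bounds $T_{a,b}$ from above, the strict-increase-on-columns condition for an SSYT (Lemma \ref{lem:Mnentry}(c), cf.\ Remark \ref{rem:ssyt}) bounds it from below, and the two bounds pinch $T_{a,b}$ down to the minimal possible value $a$.

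Concretely, the base case $a=0$ is just the convention $T_{0,b}=0$. For the inductive step, fix $a\ge 1$ and a column $b\ge 1$ with $a+b\le\nmo$, and assume $T_{a-1,b'}=a-1$ for every $b'\ge 0$ with $(a-1)+b'\le\nmo$ (the case $a=1$ of this hypothesis is just the convention). Since $a\ge 1$ we get $b\le\nmo-1$ and $a\le n-1-b$, so Corollary \ref{cor:charleftKn1} applies at $(a,b)$ and yields $T_{a,b}-T_{a-1,b+1}\le 1$; because $(a-1)+(b+1)=a+b\le\nmo$, the inductive hypothesis gives $T_{a-1,b+1}=a-1$, hence $T_{a,b}\le a$. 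On the other hand $(a,b)$ is a genuine cell of $\delta_{n-1}$ (as $a+b<n-1$), so the column condition gives $T_{a,b}\ge T_{a-1,b}+1$, and since $(a-1)+b\le\nmo$ the inductive hypothesis gives $T_{a-1,b}=a-1$, so $T_{a,b}\ge a$. Therefore $T_{a,b}=a$, completing the induction and proving \eqref{equ:=a}.

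For the remaining assertions: if $1\le b\le\nmo$ and $1\le a\le\nmo-b$, then \eqref{equ:=a} gives $T_{a,b}=a$ and $T_{a-1,b}=a-1$, so $T_{a,b}-T_{a-1,b}=1$, and Corollary \ref{cor:charleftKn0}(b) then says $T_{a,b}$ is irreducible; and the cells $T_{1,b},\dots,T_{\lfloor\nmo\rfloor-b,b}$ are exactly the $T_{a,b}$ with $1\le a\le\nmo-b$, which gives the last sentence. I do not expect any substantive obstacle here; the only thing to watch is the index bookkeeping — checking that whenever $a\ge 1$ and $a+b\le\nmo$ the cells $(a-1,b)$ and $(a-1,b+1)$ stay inside the frozen region and that Corollary \ref{cor:charleftKn1} is invoked within its stated range, and matching the half-integer bound $\nmo$ against $\lfloor\nmo\rfloor$ in the final clause. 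All of this is routine.
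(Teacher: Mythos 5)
Your proposal is correct and follows essentially the same route as the paper: induction on the row index $a$ with base case given by the convention $T_{0,b}=0$, the inductive step pinching $T_{a,b}$ between the diagonal bound from Corollary \ref{cor:charleftKn1} (using $T_{a-1,b+1}=a-1$) and the column-strictness bound (using $T_{a-1,b}=a-1$), i.e.\ exactly the mechanism of Lemma \ref{lem:diag}, and then reading off irreducibility from Corollary \ref{cor:charleftKn0}(b). The index bookkeeping you flag checks out, so no changes are needed.
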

\begin{proof}
  We prove \eqref{equ:=a} by induction on $a$ noting that the indicies
  in \eqref{equ:=a} can be described as  
\[0 \le a \le \nmo -1, \ 1 \le b \le \nmo-a.\]
  The base case when $a=0$ clearly holds since $T_{0,b}=0$ by our
  convention. Suppose \eqref{equ:=a} holds for $a = a_0$ for some $0
  \le a_0 \le \nmo-2$. We want to show $T_{a,b}=a,$ for $a = a_0 +
  1$ and any $1 \le b \le \nmo -a$. One checks that $a$ and $b$
  satisfy 
  \[ 1 \le b \le \nmo -1 \quad \text{ and } \quad 1 \le a \le \nmo-b \le n-1-b.\]
  Hence by Corollary \ref{cor:charleftKn1}, we have the diagonal property 
  \[ T_{a, b} - T_{a-1, b+1} \le 1.\]
Meanwhile, since $T$ is an SSYT, we have
\[ T_{a, b} - T_{a-1,b} \ge 1.\] 
However, by the induction hypothesis, 
\[ T_{a-1, b} = a_0 = T_{a-1, b+1}.\]
It follows from Lemma \ref{lem:diag} that \[T_{a,b} = T_{a-1,b}+1 = (a-1)+1 =a.\] Hence \eqref{equ:=a} holds.

The second conclusion easily follows from \eqref{equ:=a} and Corollary \ref{cor:charleftKn0}(b).
\end{proof}

We are now ready to state and prove the main result of this section.
\begin{prop}\label{prop:charKn}
  Suppose $T$ is a tableau of shape $\delta_{n-1}$ filled with integer
  entries. Then $T \in K_n$ if and only if the following conditions
  are satisfied. 
	\be
	\itm[\emph{(a)}] For any $1 \le b \le \lfloor \nmo \rfloor,$ 
	\be
	  \itm[\emph{(i)}] for any $1 \le a \le \lfloor \nmo \rfloor-b,$
          we have $T_{a,b}=a;$ 
	\itm[\emph{(ii)}] among the $\lfloor n/2 \rfloor$ remaining
        values of $a$, viz., $\lfloor \nmo \rfloor-b+1 \le a \le
        n-1-b$, we have that $b$ of them 
        satisfy  $T_{a,b}-T_{a-1,b} = 2$, and the remaining $\lfloor
        n/2 \rfloor-b$ of them satisfy $T_{a,b}-T_{a-1,b} = 1$. 
      \ee
	  \itm[\emph{(b)}] For any $1 \le b \le \lfloor \nmo
          \rfloor-1$ and 
          $\lfloor \nmo\rfloor -b+1 \le a \le n-1-b,$ we have the
          diagonal property $T_{a,b} - T_{a-1,b+1} \le 1$. 
	  \itm[\emph{(c)}] For any $\lfloor \nmo \rfloor +1 \le b \le n-2$ and
          any $1 \le a \le n-1-b$, we have $T_{a, b} \le n-1,$
          $T_{a,b} - T_{a-1,b} \ge 2$ and $T_{a,b} - T_{a, b-1} \ge
          1$. 
	\ee
\end{prop}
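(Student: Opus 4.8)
The plan is to prove the two implications of the ``if and only if'' separately. The direction $T\in K_n\Rightarrow$ (a),(b),(c) should amount to collecting the corollaries already proved, whereas the converse is where the real content lies: one must first show that a tableau obeying (a)--(c) even lies in $M_n$, and then that it meets the upper bound of Lemma~\ref{lem:upb}.

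For the forward direction I would start from $T\in K_n\subseteq M_n$ and feed in Lemma~\ref{lem:charKn}: for integer $b$ the conditions $1\le b\le\nmo$ and $\nmo<b\le n-2$ are the same as $1\le b\le\lfloor\nmo\rfloor$ and $\lfloor\nmo\rfloor+1\le b\le n-2$, so the $b$th column has exactly $b$ reducible entries in the first range and is entirely reducible in the second. Then (a)(i) is the first assertion of Corollary~\ref{cor:charleftKn2}; (a)(ii) comes from Corollary~\ref{cor:charleftKn0}(b),(c) together with the second assertion of Corollary~\ref{cor:charleftKn2}, once one records the count $(n-1-b)-(\lfloor\nmo\rfloor-b)=\lfloor n/2\rfloor$; (b) is a special case of Corollary~\ref{cor:charleftKn1}; and (c) follows because Lemma~\ref{lem:charKn}(b) and Note~\ref{note:red} give $T_{a,b}-T_{a-1,b}\ge 2$ and $T_{a,b}-T_{a,b-1}\ge 1$, while $T_{a,b}\le n-1$ is part of $T\in M_n$.

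For the converse, assume (a)--(c). First I would verify $T\in M_n$ via Lemma~\ref{lem:Mnentry}. Strict increase down columns and the bounds $1\le T_{a,b}\le n-1$ are quick: on the left half consecutive column-differences are $1$ or $2$ by (a), and the bottom entry of the $b$th left-half column is $(\lfloor\nmo\rfloor-b)+(\lfloor n/2\rfloor-b)+2b=n-1$, using $\lfloor(n-1)/2\rfloor+\lfloor n/2\rfloor=n-1$; on the right half (c) gives $T_{a,b}-T_{a-1,b}\ge 2$ and $T_{a,b}\le n-1$ outright; positivity is a one-line check against $T_{0,b}=0$. The genuine point is weak increase along rows. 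A row step into a right-half column is covered by $T_{a,b}-T_{a,b-1}\ge 1$ from (c); for a step between two left-half columns $b,b+1$ I would split on whether $a$ lies in the ``top part'' $a\le\lfloor\nmo\rfloor-b$ (where (a)(i) forces equality, with a short boundary estimate at $a=\lfloor\nmo\rfloor-b$) or below it, in which case Lemma~\ref{lem:diag} applies with the diagonal inequality $T_{a+1,b}-T_{a,b+1}\le 1$ of (b) and strict column increase.

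Finally I would count reducible entries. A right-half column contributes all $n-1-b$ of them, by (c) and Note~\ref{note:red}. In a left-half column the $\lfloor\nmo\rfloor-b$ entries from (a)(i) and the $\lfloor n/2\rfloor-b$ difference-$1$ entries from (a)(ii) are irreducible by Note~\ref{note:red}, so it remains to see that each of the $b$ difference-$2$ entries $T_{a,b}$ also has $T_{a,b}-T_{a,b-1}\ge 1$; granting this, the $b$th column contributes exactly $b$, the total is $\sum_{b=1}^{n-2}\min(b,n-1-b)$, and Lemma~\ref{lem:upb} pins $T$ down to $K_n$. For $b=1$ this is $T_{a,1}=T_{a-1,1}+2\ge a+1>a=T_{a,0}$. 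For $b\ge 2$ I expect this to be the crux: assuming $T_{a,b}=T_{a,b-1}$, weak row increase yields $T_{a-1,b-1}\le T_{a-1,b}=T_{a,b-1}-2$, which (column $b-1$ having differences $\le 2$) forces $T_{a,b-1}-T_{a-1,b-1}=2$ and hence $T_{a-1,b}=T_{a-1,b-1}$; after disposing of the edge value $a=\lfloor\nmo\rfloor-b+1$ by a direct appeal to (a)(i), the diagonal property (b) applies at $(a,b-1)$ and gives $T_{a,b-1}-T_{a-1,b}\le 1$, contradicting $T_{a,b-1}-T_{a-1,b}=T_{a,b-1}-T_{a-1,b-1}=2$. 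The main obstacles are precisely this degenerate-case analysis in the row direction and the index bookkeeping needed to ensure (b) and (c) are legitimately applicable at each point where they are used.
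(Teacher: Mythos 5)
Your proposal is correct and follows essentially the same route as the paper's proof: the forward direction assembles Lemma~\ref{lem:charKn}, Corollaries~\ref{cor:charleftKn0}--\ref{cor:charleftKn2} and Note~\ref{note:red}, while the converse first checks $T\in M_n$ via Lemma~\ref{lem:Mnentry} (using the diagonal property and Lemma~\ref{lem:diag} for weak row increase) and then shows every difference-$2$ entry in a left-half column is reducible by the same case split ($b=1$, the edge row $a=\lfloor\nmo\rfloor-b+1$ via (a)(i), and an application of condition (b) at column $b-1$). Your contradiction phrasing of that last step is just a mild repackaging of the paper's direct inequality $T_{a,b}-T_{a,b-1}\ge 2-1=1$.
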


\begin{proof}
  Suppose $T \in K_n$. It follows from Lemma \ref{lem:charKn},
  Corollaries \ref{cor:charleftKn0} and \ref{cor:charleftKn1}, and
  Note \ref{note:red} that (a)--(c) hold.  

  Now suppose (a)--(c) hold. We first show that $T \in M_n$ by verifying
  that the conditions in Lemma \ref{lem:Mnentry} are satisfied. It is
  clear that conditions (a) and (c) of Lemma \ref{lem:Mnentry}
  hold. One sees that (a) implies that $T_{n-1-b,b} = n-1$ for any $1
  \le b \le \lfloor \nmo \rfloor,$ which together with (c), implies
  condition (d) of Lemma \ref{lem:Mnentry}.
  
  Thus we only need to show that for any $2 \le b \le n-2,\ 1\le
a \le n-1-b$,
  \begin{equation}\label{equ:weak}
    T_{a,b}-T_{a,b-1} \ge 0.
  \end{equation}
However, we already know that \eqref{equ:weak} holds for $\lfloor \nmo
\rfloor +1 \le b \le n-2$ by condition (c), and holds for $2 \le b \le
\lfloor \nmo \rfloor$ and $1 \le a \le \lfloor \nmo\rfloor -b$
by condition (a)(i). Thus we assume $2 \le b \le \lfloor \nmo
\rfloor$ and $\lfloor \nmo\rfloor -b+1 \le a \le n-1-b$. 
  Hence by (b), we have the diagonal property
  \[ T_{a+1, b-1} - T_{a, b} \le 1.\]
Then \eqref{equ:weak} follows from Lemma \ref{lem:diag}, so we
conclude that $T \in M_n$.  
  
Now it suffices to verify conditions (a) and (b) of Lemma
\ref{lem:charKn} to conclude $T \in K_n$. Lemma \ref{lem:charKn}(b)
clearly follows from (c). For condition (a) of Lemma \ref{lem:charKn},
because of Lemma \ref{lem:upb}, it is enough to check that for any $1
\le b \le \lfloor \nmo \rfloor$ and $\lfloor \nmo \rfloor -b +1
\le a \le n-1-b,$ if $T_{a, b}- T_{a-1,b}=2,$ then $T_{a,b}$ is
reducible. However, by Note \ref{note:red}, it is sufficient to prove  
  \begin{equation}\label{equ:red}
    T_{a,b} - T_{a-1,b} =2 \qquad \Longrightarrow \qquad T_{a,b} - T_{a, b-1} \ge 1.
  \end{equation}
  If $b = 1,$ since $T_{a,b-1} = a = T_{a-1,b-1}+1 \le T_{a-1,b}+1$, we immediately
have $T_{a,b} - T_{a,b-1} \ge 1$. If $b > 1$ and $a =\lfloor \nmo
\rfloor -b +1,$ it follows from condition (a)(i) that
$T_{a,b-1}-T_{a-1,b} = a - (a-1) = 1$. Thus \eqref{equ:red} holds.  
  Hence we assume $2 \le b \le \lfloor\nmo\rfloor$ and $\lfloor
  \nmo \rfloor -b +2 \le a \le n-1-b$. 
Applying  (b) again, we get $T_{a, b-1} - T_{a-1, b} \le 1$. Then
\eqref{equ:red} follows. 
\end{proof}

\section{Two sides}

Since the characterization of the left half and right half of tableaux
in $K_n$ stated in Proposition \ref{prop:charKn} are quite different,
it is natural to split each $T \in K_n$ into two halves and investigate
them separately. 

Let $\delta_{n-1}^\L$ be the shape that is the left half of
$\delta_{n-1}$ including the middle column if there is one. In other
words, $\delta_{n-1}^\L$ is the conjugate of $(n-2, n-3, \dots,
\lfloor n/2 \rfloor)$. Note that the shape of the right half of
$\delta_{n-1}$ excluding the middle column is $\delta_{\lfloor n/2
  \rfloor}$. 
\begin{defn}\label{defn:KnLR}
Let $K_n^\L$ be the set of all the tableaux of shape $\delta_{n-1}^\L$
with integer entries satisfying conditions (a) and (b) of Proposition
\ref{prop:charKn}. For $c =1$ or $2,$ let $K_n^{\L, c}$ be the subset
of $K_n^\L$ consisting all tableaux whose $(1, \lfloor
\nmo\rfloor)$-entry is $c$. (Note that the $(1, \lfloor
\nmo\rfloor)$-entry is the last entry in the first row of any
tableau in $K_n^\L,$ which has to be either 1 or 2 by condition (a)
of Proposition \ref{prop:charKn}.) 

For $c = 1$ or $2,$ let $K_n^{\cR, c}$ be the set of all the tableaux
of shape $\delta_{\lfloor n/2 \rfloor}$ satisfying the following
conditions: 
	\begin{alist}
\itm $T_{1,1} \ge c+1$. 
\itm $T_{a,b} - T_{a,b-1} \ge 1$ for any $2 \le b \le \lfloor n/2
\rfloor-1,\ 1\le a \le \lfloor n/2 \rfloor -b$. 
\itm $T_{a,b} - T_{a-1,b} \ge 2,$ for any $2 \le a \le \lfloor n/2
\rfloor-1,\ 1\le b \le \lfloor n/2 \rfloor-a,$  
\itm $T_{\lfloor n/2 \rfloor-b,b} \le n-1,$ for any $1 \le b \le
\lfloor n/2 \rfloor-1$. 
	\end{alist}

We consider all the sets above as posets with the coordinate-wise
partial ordering. 
\end{defn}

For any $T \in K_n,$ we define $\Split(T) = (T^\L, T^\cR)$, where $T^L$
of shape $\delta_{n-1}^\L$ is the left half including the middle column of
$T$, and $T^\cR$ of shape $\delta_{\lfloor n/2 \rfloor}$ is the right
half excluding the middle column of $T$. It follows from
Proposition~\ref{prop:charKn} that $(T^\L, T^\cR) \in K_n^{\L, c}
\times K_n^{\cR, c}$ for some $c=1$ or $2$. Thus,  
\[ \Split(K_n) \subseteq \left( K_n^{\L, 1} \times K_n^{\cR, 1}
\right) \bigcupdot \left( K_n^{\L, 2} \times K_n^{\cR, 2}\right).\]  
The equality in the above equation actually holds.
\begin{lem}\label{lem:split}
	We have
\[ \Split(K_n) = \left( K_n^{\L, 1} \times K_n^{\cR, 1} \right)
\bigcupdot \left( K_n^{\L, 2} \times K_n^{\cR, 2}\right).\]  
Therefore,
\[ K_n \cong \left( K_n^{\L, 1} \times K_n^{\cR, 1} \right) \bigcupdot
\left( K_n^{\L, 2} \times K_n^{\cR, 2}\right).\]  
\end{lem}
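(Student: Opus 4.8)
The plan is to establish the reverse inclusion, since the forward inclusion $\Split(K_n) \subseteq (K_n^{\L,1}\times K_n^{\cR,1}) \bigcupdot (K_n^{\L,2}\times K_n^{\cR,2})$ has already been observed from Proposition~\ref{prop:charKn}, and the map $\Split$ is manifestly injective (it just partitions the entries of $T$ into two disjoint regions, and $T$ is recovered by reassembling them). Given a pair $(U, V)$ in $K_n^{\L,c}\times K_n^{\cR,c}$ for some $c\in\{1,2\}$, I would glue $U$ and $V$ together into a single tableau $T$ of shape $\delta_{n-1}$ by placing $U$ (of shape $\delta_{n-1}^\L$) in columns $1$ through $\lfloor (n-1)/2\rfloor$ and $V$ (of shape $\delta_{\lfloor n/2\rfloor}$) in columns $\lfloor (n-1)/2\rfloor+1$ through $n-2$. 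The goal is then to verify that $T$ satisfies conditions (a), (b), (c) of Proposition~\ref{prop:charKn}, which by that proposition is equivalent to $T\in K_n$; and since $\Split(T) = (U,V)$ by construction, this shows $(U,V)\in\Split(K_n)$.

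Conditions (a) and (b) of Proposition~\ref{prop:charKn} involve only the left-half entries (columns $1$ through $\lfloor \nmo\rfloor$), and these are exactly the defining conditions of $K_n^\L$; so they hold automatically for $T$ since $U\in K_n^\L$. Condition (c) of Proposition~\ref{prop:charKn}, for columns $b$ with $\lfloor\nmo\rfloor+2\le b\le n-2$, involves only the right-half entries strictly to the right of the middle column, and I would check it follows directly from conditions a)--d) defining $K_n^{\cR,c}$ after the index shift $b\mapsto b - \lfloor\nmo\rfloor$ (noting that the ``$\le n-1$'' bound is exactly condition d), the column condition ``$T_{a,b}-T_{a-1,b}\ge 2$'' is condition c), and the row condition for $b > \lfloor\nmo\rfloor+1$ is condition b)). The only genuinely new verification — and the main obstacle — is condition (c) at the boundary column $b_0 := \lfloor\nmo\rfloor+1$, i.e.\ the first column of the right half, because there the inequalities $T_{a,b_0}-T_{a,b_0-1}\ge 1$ and $T_{a,b_0}-T_{a-1,b_0}\ge 2$ couple an entry of $V$ (namely $T_{a,b_0} = V_{a,1}$) with an entry of $U$ (namely $T_{a,b_0-1} = U_{a,\lfloor\nmo\rfloor}$, the last column of $U$).

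For this boundary interaction I would argue as follows. The column condition $T_{a,b_0}-T_{a-1,b_0}\ge 2$ is internal to $V$: it is condition c) of $K_n^{\cR,c}$ for $a\ge 2$, and for $a=1$ it is vacuous. The row condition $T_{a,b_0} - T_{a,b_0-1}\ge 1$, i.e.\ $V_{a,1} \ge U_{a,\lfloor\nmo\rfloor} + 1$, is the crux. For $a=1$: by condition (a) of Proposition~\ref{prop:charKn} the last entry of the first row of $U$ is $U_{1,\lfloor\nmo\rfloor} = c$ (this is precisely why $K_n^{\L,c}$ was defined by the value of that entry), and condition a) of $K_n^{\cR,c}$ gives $V_{1,1}\ge c+1$, so $V_{1,1}\ge U_{1,\lfloor\nmo\rfloor}+1$ as required. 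For $a\ge 2$, I would bound $U_{a,\lfloor\nmo\rfloor}$ from above using the structure of $K_n^\L$: from condition (a)(i)/(a)(ii) of Proposition~\ref{prop:charKn} applied to column $\lfloor\nmo\rfloor$ of $U$ (which by Corollary~\ref{cor:charleftKn0} forces the last entry to be $n-1$ and pins down the column-difference pattern), together with the diagonal property (b), one gets a ceiling on each $U_{a,\lfloor\nmo\rfloor}$; and from conditions a),c) of $K_n^{\cR,c}$ one gets $V_{a,1} = V_{1,1} + \sum_{a'=2}^a (V_{a',1}-V_{a'-1,1}) \ge (c+1) + 2(a-1)$, which I expect to dominate the ceiling on $U_{a,\lfloor\nmo\rfloor}$ in all cases. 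Carrying out this last estimate carefully — keeping track of the parity of $n$ and of whether there is a middle column — is the technical heart of the argument; everything else is bookkeeping against Proposition~\ref{prop:charKn}. Finally, the poset isomorphism statement $K_n\cong (K_n^{\L,1}\times K_n^{\cR,1})\bigcupdot(K_n^{\L,2}\times K_n^{\cR,2})$ is immediate once the set equality is proved, because $\Split$ and its inverse are visibly order-preserving (comparing two tableaux entrywise is the same as comparing their left halves entrywise and their right halves entrywise), and a pair from the first disjunct can never be $\le$ a pair from the second or vice versa since their $(1,\lfloor\nmo\rfloor)$-entries differ, so the order on the disjoint union restricts to the order within each component.
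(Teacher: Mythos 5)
Your main argument is the same as the paper's: only the reverse inclusion needs proof, and after gluing, Proposition~\ref{prop:charKn} reduces everything to the single boundary inequality $T^\cR_{a,1}-T^\L_{a,\lfloor\nmo\rfloor}\ge 1$, which you attack with exactly the paper's lower bound $T^\cR_{a,1}\ge (c+1)+2(a-1)$. The one step you leave open (``I expect to dominate the ceiling in all cases'') is in fact immediate and needs none of the machinery you anticipate: condition (a)(ii) of Proposition~\ref{prop:charKn} applied to column $b=\lfloor\nmo\rfloor$ of $T^\L$ says that every consecutive difference down that column equals $1$ or $2$, and $T^\L_{1,\lfloor\nmo\rfloor}=c$ by the definition of $K_n^{\L,c}$, so $T^\L_{a,\lfloor\nmo\rfloor}\le c+2(a-1)<(c+1)+2(a-1)\le T^\cR_{a,1}$ for all $a$. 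No diagonal property, no appeal to Corollary~\ref{cor:charleftKn0}, and no parity case analysis is required, so this gap is real but closes in one line, exactly as in the paper.

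The genuine error is your closing claim that an element of $K_n^{\L,1}\times K_n^{\cR,1}$ can never be comparable to one of $K_n^{\L,2}\times K_n^{\cR,2}$ because the $(1,\lfloor\nmo\rfloor)$-entries differ. Differing entries ($1$ versus $2$) do not obstruct a coordinatewise inequality, and such comparabilities do occur: for $n=4$ the poset $K_4$ is the three-element chain formed by the tableaux with rows $(1,2),(3)$, then $(1,3),(3)$, then $(2,3),(3)$; the middle tableau lies in the $c=1$ part and is entrywise below the top tableau, which lies in the $c=2$ part. Indeed, Lemma~\ref{lem:charcomp} and Proposition~\ref{prop:Uneven}(e) later in the paper are devoted precisely to these cross comparabilities, which are essential to Theorem~\ref{thm:even}; if your incomparability claim were true, $K_{2m}$ would be a disconnected disjoint union of two lattices, contradicting the $n=4$ computation above. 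The ``therefore'' part of the lemma should be read simply as the correspondence induced by the injective map $\Split$, which identifies $K_n$ with the displayed disjoint union of sets and is order-preserving in both directions on each of the two parts (entrywise comparison of $T$ amounts to entrywise comparison of $T^\L$ and of $T^\cR$); it is not the assertion that $K_n$ is the direct sum of the two product posets, and your attempted justification of that stronger, false statement should be deleted.
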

 
\begin{proof}
We only need to show that for any $(T^\L, T^\cR) \in K_n^{\L, c}
\times K_n^{\cR, c}$ for some $c=1$ or $2,$ if $T$ is the tableau
obtained by gluing $T^\L$ on the left side of $T^\cR$, then  we have
$T \in K_n$. However, by Proposition \ref{prop:charKn}, it is enough
to verify that  
\begin{equation}\label{equ:difflr} T^\cR_{a, 1} - T^\L_{a, \lfloor
    \nmo\rfloor} \ge 1, \qquad \fall a. 
\end{equation}
It follows from conditions (a) and (c) of Definition \ref{defn:KnLR}
that $T^\cR_{a,1} \ge c+1 + 2(a-1)$. Further, because $T^\L$ satisfies
condition (a) of Proposition \ref{prop:charKn} and $T^\L_{1, \lfloor
  \nmo\rfloor} = c,$ we conclude that $T^\L_{a, \lfloor
  \nmo\rfloor} \le c + 2(a-1)$. Therefore equation~\eqref{equ:difflr}
follows. 
\end{proof}

The main goal of this section is to show
\[ K_n^\L \cong M_{\lfloor n/2 \rfloor+1}.\]

\begin{defn} Let $A_n$ be the set of tableaux of shape
  $\delta_{n-1}^\L$ with integer entries satisfying condition (a) of
  Proposition \ref{prop:charKn}.  
	
Write $m=\lfloor \nmo\rfloor$. Let $A'_n$ be the set of tableaux of
rectangular shape 
  \[ (\underbrace{m,m,
    \dots, m }_{ \lfloor n/2 \rfloor}) \] 
  with entries 1 or 2 where the $b$th column has $b$ copies of
  2's and $\lfloor n/2 \rfloor -b$ copies of 1's. 

Let $A''_n$ be the set of tableaux of shape $\delta_{\lfloor n/2
  \rfloor}$ with integer entries in $\{1, 2, \dots, \lfloor n/2
\rfloor\}$ where the entries in each column are strictly increasing.  

Define $\theta_1: A_n \to A'_n$ in the following way. For any $T \in
A_n,$ we do the following three operations on $T$. 
  \begin{enumerate}
    \item For any $1 \le b \le m$ and $1 \le a
      \le m-b,$ we replace the number in the
      $(a,b)$-entry with $T_{a,b}-T_{a-1,b}$. 
    \item Remove all the entries $T_{a,b}$ with $a + b \le
      m$. (Note after this each column has
      $\lfloor n/2 \rfloor$ entries left.) 
    \item Shift all the entries up to make a rectangular shape. 
  \end{enumerate}
We call the resulting rectangular tableau $\theta_1(T)$. 

Define $\theta_2: A'_n \to A''_n$ in the following way. For any $T'
\in A'_n,$ we create $\theta_2(T')$ of shape $\theta_{\lfloor n/2
  \rfloor}$ with entries 
\begin{equation}\label{equ:defntheta2}
\theta_2(T')_{a,b} := \text{ the row index of the $a$th 1 in 
  column $b$ of $T'$}. 
\end{equation}

Define $\theta = \theta_2 \circ \theta_1: A_n \to A''_n$.
\end{defn}

\begin{ex}\label{ex:theta}
Below are examples of the maps $\theta_1$ and $\theta_2$ for $n=6$ and $n=7$.
\[\ytableausetup{centertableaux, boxsize=1.5em}
  \begin{ytableau}
1 & 2 \\
3 & 3 \\
4 & 5 \\
5
\end{ytableau} \ \in A_6  \qquad \stackrel{\theta_1}{\longmapsto} \qquad
  \begin{ytableau}
2 & 2 \\
1 & 1 \\
1 & 2 
\end{ytableau} \ \in A'_6  \qquad \stackrel{\theta_2}{\longmapsto} \qquad
\begin{ytableau}
2 & 2 \\
3 
\end{ytableau} \ \in A''_6 \]

\[\ytableausetup{centertableaux, boxsize=1.5em}
  \begin{ytableau}
1 & 1 & 2\\
2 & 3 & 4\\
4 & 4 & 6\\
5 & 6 \\
6
\end{ytableau} \ \in A_7  \qquad \stackrel{\theta_1}{\longmapsto} \qquad
  \begin{ytableau}
2 & 2 & 2\\
1 & 1 & 2\\
1 & 2 & 2
\end{ytableau} \ \in A'_7  \qquad \stackrel{\theta_2}{\longmapsto} \qquad
\begin{ytableau}
2 & 2 \\
3 
\end{ytableau} \ \in A''_7 \]
\end{ex}

It is clear that $\theta_1$ is a bijection from $A_n$ to $A'_n$ and
$\theta_2$ is a bijection from $A'_n$ to $A''_n$. Hence $\theta$ is a
bijection from $A_n$ to $A''_n$. Below is the main result of this
section. 

\begin{prop}\label{prop:leftiso}
Consider both $A_n$ and $A''_n$ as posets with the coordinate-wise
partial ordering. Then $\theta$ is a poset isomorphism from $A_n$ to
$A''_n$.  

Furthermore (noting that $K_n^\L$ is a subposet of $A_n$) the map
$\theta$ induces a poset isomorphism from $K_n^\L$ to $M_{\lfloor n/2
  \rfloor +1}$. Hence, 
\[ K_n^\L \cong M_{\lfloor n/2 \rfloor+1}.\]
\end{prop}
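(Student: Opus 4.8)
The plan is to prove Proposition~\ref{prop:leftiso} in two stages: first that $\theta\colon A_n\to A''_n$ is a poset isomorphism, and then that it restricts to a bijection $K_n^\L\to M_{\lfloor n/2\rfloor+1}$ that is automatically an order isomorphism (being the restriction of one). Since we already know $\theta=\theta_2\circ\theta_1$ is a bijection of sets, the work is entirely about monotonicity in both directions and about identifying the image of $K_n^\L$.

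For the first stage, I would argue that each of $\theta_1$ and $\theta_2$ is order-preserving and order-reflecting, since a composite of such maps is again one. For $\theta_1$: on $A_n$, condition (a)(i) of Proposition~\ref{prop:charKn} forces the entries with $a+b\le m$ to be fixed (equal to $a$), so the only ``free'' data in $T\in A_n$ are the consecutive column differences $T_{a,b}-T_{a-1,b}\in\{1,2\}$ for $\lfloor\nmo\rfloor-b+1\le a\le n-1-b$, which is exactly what $\theta_1$ records. Since $T_{a,b}=T_{0,b}+\sum_{a'\le a}(T_{a',b}-T_{a'-1,b})$ and $T_{0,b}=0$, comparing two tableaux entrywise in $A_n$ is \emph{not} literally the same as comparing their difference tableaux entrywise, so this step needs a short argument: I expect to show that $S\le T$ in $A_n$ iff the partial sums of column differences of $S$ are dominated by those of $T$, and then observe that because each column of $\theta_1(T)$ has a fixed multiset of entries ($b$ twos and $\lfloor n/2\rfloor-b$ ones, by Corollary~\ref{cor:charleftKn0}(c)), domination of partial sums down a column is equivalent to the entrywise order on $\theta_1(T)$ after the up-shift. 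This ``fixed column content'' fact is what makes partial-sum domination and entrywise domination coincide, and it is the technical heart of the $\theta_1$ step. For $\theta_2$: with column contents fixed, the position of the $a$th $1$ in column $b$ of $T'\in A'_n$ increases weakly as $T'$ increases in the coordinatewise order (pushing a $2$ down past a $1$ moves that $1$ up, i.e.\ decreases a row index; conversely), so $\theta_2$ reverses\,/\,preserves order appropriately—I will need to check the sign carefully, but since we are told $\theta$ \emph{is} an isomorphism to $A''_n$ the bookkeeping will come out, and the key point is just that ``row index of the $a$th $1$'' is a monotone function of the column word once the content is fixed.

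For the second stage, I must identify $\theta(K_n^\L)$ inside $A''_n$. Recall $A''_n$ consists of fillings of $\delta_{\lfloor n/2\rfloor}$ with entries in $\{1,\dots,\lfloor n/2\rfloor\}$, strictly increasing down columns, with no row condition imposed; whereas $M_{\lfloor n/2\rfloor+1}$ is the set of SSYT of the same shape $\delta_{\lfloor n/2\rfloor}$ with largest part at most $\lfloor n/2\rfloor$, i.e.\ the \emph{additional} constraint of weakly increasing rows. So the claim is: $T\in A_n$ lies in $K_n^\L$ (equivalently, also satisfies the diagonal property (b) of Proposition~\ref{prop:charKn}) if and only if $\theta(T)$ has weakly increasing rows. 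I would translate the diagonal property $T_{a,b}-T_{a-1,b+1}\le 1$ on $A_n$ through $\theta_1$ into a relation between adjacent columns of the difference tableau $\theta_1(T)$, and then through $\theta_2$ into the statement $\theta(T)_{a,b}\le\theta(T)_{a,b+1}$. Concretely, the diagonal inequality compares an entry of column $b$ with an entry of column $b+1$ one row up, which after recording column differences and up-shifting becomes a lexicographic/prefix comparison of the $\{1,2\}$-words in columns $b$ and $b+1$ of $\theta_1(T)$, and this prefix comparison is exactly equivalent to ``the $a$th $1$ in column $b$ is weakly above the $a$th $1$ in column $b+1$,'' i.e.\ $\theta_2$ rows weakly increase. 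Verifying this equivalence, keeping the index shifts (the $\lfloor\nmo\rfloor-b+1$ offset varies with $b$) straight, is the main obstacle; I would check it on the $n=6,7$ instances in Example~\ref{ex:theta} as a sanity check. Once the equivalence is established, $\theta(K_n^\L)=M_{\lfloor n/2\rfloor+1}$, and since $\theta$ is already an isomorphism $A_n\cong A''_n$, its restriction is an isomorphism $K_n^\L\cong M_{\lfloor n/2\rfloor+1}$, as claimed.
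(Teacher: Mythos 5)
Your overall strategy (record column differences, use the fact that each column of $\theta_1(T)$ has a fixed content of $b$ twos and $\lfloor n/2\rfloor-b$ ones, and compare positions of the ones) is the same as the paper's, and your stage-2 sketch is essentially its Lemma \ref{lem:thetaweak}. But stage 1 as you have structured it would fail. You propose to prove that $\theta_1\colon A_n\to A'_n$ and $\theta_2\colon A'_n\to A''_n$ are each order-preserving and order-reflecting for the coordinatewise order and then compose. Since \emph{every} element of $A'_n$ has exactly the same column content, two distinct elements of $A'_n$ are never coordinatewise comparable: $(A'_n,\le)$ is an antichain, whereas $A_n$ and $A''_n$ are not, so no such factorization can exist. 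Concretely, for $n=6$ take two elements of $A_6$ with the same first column and second columns $(2,3,5)$ and $(2,4,5)$; they are comparable in $A_n$, but their $\theta_1$-images have second columns $(2,1,2)$ and $(2,2,1)$, which are incomparable. The same example refutes the statement you call the technical heart of the $\theta_1$ step, namely that with fixed column content ``domination of partial sums down a column is equivalent to the entrywise order on $\theta_1(T)$'': the partial sums $(2,3,5)$ and $(2,4,5)$ are comparable while the $\{1,2\}$-columns are not. Finally, resolving the direction of monotonicity of $\theta_2$ by appealing to the fact that ``we are told $\theta$ is an isomorphism'' is circular: that is precisely the assertion being proved.

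The repair — and the paper's actual route — is to impose no order on $A'_n$ at all and to use prefix-count domination as the intermediate condition. Writing $m=\lfloor\nmo\rfloor$, condition (a)(i) gives $T_{m-j,j}=m-j$, and telescoping the column differences yields $T_{m-j+i,j}=2i+m-j-\ones(\text{column $j$ of }\theta_1(T),i)$; hence $T^{(1)}\le T^{(2)}$ in $A_n$ if and only if $\ones(\text{column $j$ of }\theta_1(T^{(1)}),i)\ \ge\ \ones(\text{column $j$ of }\theta_1(T^{(2)}),i)$ for all $i,j$ (note the reversal of the inequality, which settles your sign question). Because the two tableaux have the same number of ones in each column, this domination is equivalent, by Lemma \ref{lem:onesrind}, to $\rind$-comparison of the ones, i.e.\ to $\theta(T^{(1)})\le\theta(T^{(2)})$ in $A''_n$; this proves the first assertion without ever ordering $A'_n$. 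Your stage 2 then goes through exactly as you sketch: with $a=m-b+i$ the same identity gives $T_{a,b}-T_{a-1,b+1}=1-\ones(\text{col }b,i)+\ones(\text{col }b+1,i)$, so the diagonal property is precisely prefix-domination of column $b+1$ by column $b$ of $\theta_1(T)$, which by Lemma \ref{lem:onesrind} is precisely weak increase along the rows of $\theta(T)$; the index shifts align automatically, and $\theta(K_n^\L)=M_{\lfloor n/2\rfloor+1}$ follows.
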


We will break the proof of Proposition \ref{prop:leftiso} into several
lemmas. For convenience, for any column $C$ with entries in $\{1,2\},$
we define 
\[ \ones(C, i) := \text{the number of 1's in the first $i$ entries of $C$}, \]
and
\[ \rind(C, a) := \text{the row index of the $a$th 1 of $C$}.\]
For example, if $C$ is the first column of the $3 \times 2$ tableau in
$A'_6$ appearing in Example \ref{ex:theta}, we have $\ones(C, 1) = 0,$ $\ones(C,2)=1,$ $\ones(C,3)=2$.  

We have the following obvious lemma on these two statistics.
\begin{lem}\label{lem:onesrind}
	Suppose $C$ and $C'$ are two columns of $\ell$ entries in
        $\{1,2\}$. Then the following two conditions are equivalent. 

	\begin{ilist}
\itm For any $1 \le i \le \ell$, 
$\ones(C, i) \le \ones(C',i)$.
\itm $\ones(C, \ell) \le \ones(C', \ell)$ and $\rind(C,a) \ge \rind(C', a)$ 
for any $1 \le a \le \ones(C, \ell)$.
	\end{ilist}
\end{lem}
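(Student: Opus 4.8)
The plan is to prove Lemma~\ref{lem:onesrind} by unwinding the definitions of $\ones$ and $\rind$ and showing that each condition is a repackaging of the same information about where the $1$'s sit in a $\{1,2\}$-column. Throughout, I fix $\ell$ and write $C,C'$ for the two columns.

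First I would establish the implication (i)$\Rightarrow$(ii). Taking $i=\ell$ in (i) immediately gives $\ones(C,\ell)\le\ones(C',\ell)$, which is the first half of (ii). For the second half, fix $1\le a\le\ones(C,\ell)$ and set $r=\rind(C,a)$, the row of the $a$th~$1$ in $C$. Then by definition $\ones(C,r)=a$ and $\ones(C,r-1)=a-1$. Applying (i) at $i=r-1$ gives $\ones(C',r-1)\ge a-1$, but I actually want $\ones(C',r)\ge a$, which is exactly (i) at $i=r$; this says $C'$ has already accumulated at least $a$ ones by row $r$, hence its $a$th~$1$ occurs in row $\le r$, i.e. $\rind(C',a)\le r=\rind(C,a)$. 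That is the desired inequality $\rind(C,a)\ge\rind(C',a)$. (One should note $\rind(C',a)$ is well defined here because $a\le\ones(C,\ell)\le\ones(C',\ell)$.)

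For the converse (ii)$\Rightarrow$(i), I would fix $1\le i\le\ell$ and let $a=\ones(C,i)$, the number of $1$'s among the first $i$ entries of $C$. If $a=0$ the inequality $\ones(C,i)\le\ones(C',i)$ is trivial, so assume $a\ge1$; then $a\le\ones(C,\ell)$, so (ii) applies and gives $\rind(C',a)\le\rind(C,a)$. Since $C$ has exactly $a$ ones in its first $i$ rows, its $a$th~$1$ lies in row $\le i$, i.e. $\rind(C,a)\le i$, hence $\rind(C',a)\le i$ as well. But $\rind(C',a)\le i$ means $C'$ has at least $a$ ones among its first $i$ entries, i.e. $\ones(C',i)\ge a=\ones(C,i)$, as required.

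Since both $\ones$ and $\rind$ are just the counting function of the set of rows containing a $1$ and its (partial) inverse, these manipulations are entirely routine; there is no real obstacle beyond being careful that all invoked values of $\rind$ are in range, which is handled by the hypothesis $\ones(C,\ell)\le\ones(C',\ell)$ in (ii) and by taking $i=\ell$ in (i). The lemma is flagged as ``obvious,'' and indeed the only content is the observation that ``the $a$th $1$ is in row $\le i$'' and ``there are $\ge a$ ones in the first $i$ rows'' are logically equivalent statements, applied in both directions.
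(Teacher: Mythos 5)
Your proof is correct and follows essentially the same argument as the paper: both directions rest on the equivalence between ``$\rind(\cdot,a)\le i$'' and ``$\ones(\cdot,i)\ge a$,'' applied at $i=\rind(C,a)$ for (i)$\Rightarrow$(ii) and at $a=\ones(C,i)$ for the converse. The only cosmetic difference is that you prove (ii)$\Rightarrow$(i) directly while the paper phrases it contrapositively; the content is identical.
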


\begin{proof}
Suppose (i) holds. Clearly we have $\ones(C, \ell) \le \ones(C',
\ell)$. Let $1 \le a \le \ones(C, \ell)$ and $\rind(C, a) =i$. Then 
  $$ a= \ones(C,i) \le \ones(C',i). $$ 
Therefore $\rind(C',a) \le i$, so (ii) follows. 

Suppose there exists $1 \le i \le \ell$ such that $\ones(C, i) > \ones(C', i)$.
Let $a = \ones(C, i)$. Then $\rind(C, a) \le i < \rind(C', a)$.
\end{proof}

Note that with the definition of $\rind$, we can rewrite
\eqref{equ:defntheta2} as
 \[ \theta_2(T')_{a, b} := \rind( \text{column $b$ of $T'$}, a).\]

 \begin{lem}\label{lem:thetaord}
	Suppose $T^{(1)}, T^{(2)} \in A_n$. Then the following
        conditions are equivalent. 
\be
	\itm[\emph{(a)}] $T^{(1)} \le T^{(2)}$. 
	\itm[\emph{(b)}] For any $1 \le j \le \lfloor \nmo \rfloor$ and $1
        \le i \le \lfloor n/2 \rfloor$, 
	\[ \ones(\text{column $j$ of $\theta_1(T^{(1)})$}, i) \ge
        \ones(\text{column $j$ of $\theta_2(T^{(2)})$}, i).\] 
	\itm[\emph{(c)}] $\theta(T^{(1)}) \le \theta(T^{(2)})$.
\ee
 \end{lem}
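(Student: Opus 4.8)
The statement to prove is Lemma~\ref{lem:thetaord}: the equivalence of (a) $T^{(1)}\le T^{(2)}$ in $A_n$, (b) a columnwise inequality of the $\ones$-statistics of $\theta_1(T^{(1)})$ and $\theta_1(T^{(2)})$, and (c) $\theta(T^{(1)})\le\theta(T^{(2)})$. I would establish the cycle of implications (a)$\Leftrightarrow$(b) and (b)$\Leftrightarrow$(c). The equivalence (b)$\Leftrightarrow$(c) is essentially a column-by-column application of the already-proven Lemma~\ref{lem:onesrind} together with the reinterpretation $\theta_2(T')_{a,b}=\rind(\text{column }b\text{ of }T',a)$: condition (c) says $\rind(\text{column }b\text{ of }\theta_1(T^{(1)}),a)\ge\rind(\text{column }b\text{ of }\theta_1(T^{(2)}),a)$ for all $a,b$, which, by Lemma~\ref{lem:onesrind}(ii)$\Leftrightarrow$(i) applied to each column $b$, is equivalent to (b) — but I need to be a little careful that the ``$\ones(C,\ell)\le\ones(C',\ell)$'' half of condition (ii) is also captured. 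Since every column $b$ of $\theta_1(T)$ (for $T\in A_n$) has exactly $b$ twos and $\lfloor n/2\rfloor-b$ ones by the definition of $A'_n$, the total counts $\ones(C,\ell)$ are \emph{equal} for $T^{(1)}$ and $T^{(2)}$ in each fixed column, so that half is automatic and the two $\rind$-tableaux $\theta(T^{(1)}),\theta(T^{(2)})$ even have the same shape; thus (b)$\Leftrightarrow$(c) reduces cleanly to Lemma~\ref{lem:onesrind}.

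\textbf{The core step: (a)$\Leftrightarrow$(b).} This is where the real content lies. Fix a column index $j$ with $1\le j\le\lfloor\nmo\rfloor$ and look at what $\theta_1$ does to column $j$: by operation~(1) it replaces $T_{a,j}$ by the difference $T_{a,j}-T_{a-1,j}$ for $a\le m-j$, and operations (2)--(3) discard the top $m-j$ entries (which are forced to be $1$'s after taking differences, by condition (a)(i) of Proposition~\ref{prop:charKn}, since $T_{a,j}=a$ there) and keep the bottom $\lfloor n/2\rfloor$ difference-entries, each of which is $1$ or $2$ by (a)(ii). The key observation is that the partial sums of the entries of column $j$ of $T$ are recovered from the difference-column: for an index $a$ in the retained range, $T_{a,j}=a_0+\sum(\text{differences down to row }a)$ where $a_0$ is the common value $T_{m-j,j}=m-j$. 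Concretely, if $D=(d_1,\dots,d_{\lfloor n/2\rfloor})$ is the difference-column (so $d_t\in\{1,2\}$), then the $t$-th retained entry of column $j$ of $T$ equals $(m-j)+\sum_{s\le t}d_s=(m-j)+t+(\#\text{twos among }d_1,\dots,d_t)=(m-j)+t+(t-\ones(D,t))$. Hence $T^{(1)}_{a,j}\le T^{(2)}_{a,j}$ for every retained row $a$ is \emph{equivalent}, column by column, to $\ones(\text{column }j\text{ of }\theta_1(T^{(1)}),t)\ge\ones(\text{column }j\text{ of }\theta_1(T^{(2)}),t)$ for every $t$; and on the discarded top part the entries $T_{a,j}=a$ agree for both tableaux, so they impose no constraint. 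Running this over all columns $j$ gives (a)$\Leftrightarrow$(b). The one bookkeeping point to handle carefully is the index translation between ``row $a$ in the shape $\delta_{n-1}^\L$'' and ``index $t$ in the truncated/shifted column of length $\lfloor n/2\rfloor$''; this is routine but must be written out so the range of $a$ in (b) (namely $1\le i\le\lfloor n/2\rfloor$) matches the retained rows.

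\textbf{Anticipated obstacle and remarks.} I expect the main obstacle to be purely notational/index-management: keeping straight which rows of column $j$ survive operations (2)--(3), how the shift in (3) renumbers them, and verifying the arithmetic identity $T_{a,j}=(m-j)+t+(t-\ones(D,t))$ in a way that visibly ranges over exactly the right $a$'s. There is no deep inequality to discover — the content is exactly the monotone change of variables ``partial sums of a $\{1,2\}$-sequence $\leftrightarrow$ running counts of $1$'s,'' which reverses inequalities, composed with Lemma~\ref{lem:onesrind} to pass from running counts to $\rind$. I would present (a)$\Leftrightarrow$(b) first via this explicit formula, then state (b)$\Leftrightarrow$(c) as an immediate consequence of Lemma~\ref{lem:onesrind} applied columnwise (using that the column sums of $1$'s agree, so hypothesis (ii) of that lemma is met and the two output tableaux share the shape $\delta_{\lfloor n/2\rfloor}$), which also makes the final sentence ``$\theta$ is a poset isomorphism'' in Proposition~\ref{prop:leftiso} fall out once combined with the already-noted bijectivity of $\theta$.
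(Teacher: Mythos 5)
Your proposal is correct and is essentially the paper's own proof: your identity for the retained entries, $T_{m-j+t,\,j}=(m-j)+t+(t-\ones(D,t))=(m-j)+2t-\ones(D,t)$ (anchored by $T_{m-j,j}=m-j$ from condition (a)(i), with the discarded top entries fixed and hence imposing no constraint), is exactly the paper's key equation, and your (b)$\Leftrightarrow$(c) step is, as in the paper, a columnwise application of Lemma~\ref{lem:onesrind}, with your remark that each column of $\theta_1(T)$ has a fixed number of 1's (so the $\ones(C,\ell)\le\ones(C',\ell)$ half of that lemma is automatic) making explicit a point the paper leaves implicit. One minor slip to fix in writing it up: condition (c) translates to $\rind(\text{column }j\text{ of }\theta_1(T^{(1)}),a)\le\rind(\text{column }j\text{ of }\theta_1(T^{(2)}),a)$, not $\ge$ as in your parenthetical, though the rest of your argument (two order-reversals composing to an order-preserving map) uses the correct direction.
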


 \begin{proof}
   The equivalence between (b) and (c) follows directly from Lemma
   \ref{lem:onesrind}.  

   Write $m=\lfloor \nmo \rfloor$.
For any $T \in A_n,$ any $1 \le j \le m$ and $1 \le i \le \lfloor n/2
\rfloor$, we have 
   \begin{align*}
 T_{m-j+i,j} - T_{ m-j,j} =& \sum_{a=  m-j+1}^{m-j+i}
 \left(T_{a,j}-T_{a-1,j}\right)\\
   =& \sum_{k=1}^i \theta_1(T)_{k,j} \\
    =& 2i - \ones(\text{column $j$ of $\theta_1(T)$}, i). 
 \end{align*}
 Since $T$ satisfies condition (a)(i) of Proposition \ref{prop:charKn}, we have $T_{ m-j,j} = m-j.$ Thus,
 \begin{equation}\label{equ:theta1}
T_{m-j+i,j} = 2i + m-j - \ones(\text{column $j$ of $\theta_1(T)$}, i).
 \end{equation}
Therefore (a) and (b) are equivalent.
 \end{proof}
 One sees that the first conclusion of Proposition \ref{prop:leftiso}
 follows from Lemma \ref{lem:thetaord}. 

 \begin{lem}\label{lem:thetaweak}
Suppose $T \in A_n$. Then the following conditions are equivalent.
\begin{alist}
\itm $T$ satisfies condition (b) of Proposition \ref{prop:charKn}.
\itm For any $1 \le j \le \lfloor \nmo\rfloor-1$ and $1 \le i \le
\lfloor n/2 \rfloor$, 
\[ \ones(\text{column $j$ of $\theta_1(T)$}, i) \ge
\ones(\text{column $j+1$ of $\theta(T)$}, i).\] 
\itm The entries are weakly increasing in each row of $\theta(T)$.
\end{alist}
 \end{lem}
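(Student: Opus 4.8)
The plan is to prove Lemma~\ref{lem:thetaweak} by establishing the cycle of implications (a) $\Rightarrow$ (b) $\Rightarrow$ (c) $\Rightarrow$ (a), using the explicit formula \eqref{equ:theta1} from the proof of Lemma~\ref{lem:thetaord} as the main computational bridge. The key observation is that condition (b) of Proposition~\ref{prop:charKn}, the diagonal property $T_{a,b} - T_{a-1,b+1} \le 1$, is a relation between column $b$ and column $b+1$ of $T$ that should translate (via $\theta_1$ and \eqref{equ:theta1}) into exactly the $\ones$-inequality in item (b) of the lemma; and then the equivalence of (b) and (c) will be an immediate consequence of Lemma~\ref{lem:onesrind} (just as (b) $\Leftrightarrow$ (c) in Lemma~\ref{lem:thetaord} was), since $\theta = \theta_2 \circ \theta_1$ and $\theta_2$ converts the pointwise $\ones$-domination between adjacent columns into row-wise weak increase of $\theta(T)$.

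First I would handle the equivalence of (b) and (c): the entries of the $b$th column of $\theta(T) = \theta_2(\theta_1(T))$ are precisely $\rind(\text{column $b$ of }\theta_1(T), a)$ for $a = 1, \dots$, so ``weakly increasing in each row of $\theta(T)$'' means $\rind(\text{col $j$ of }\theta_1(T), a) \le \rind(\text{col $j+1$ of }\theta_1(T), a)$ for all valid $a$. By Lemma~\ref{lem:onesrind} (applied with $C$ the $(j{+}1)$st column and $C'$ the $j$th column, each of length $\lfloor n/2\rfloor$), this together with the endpoint condition $\ones(\text{col $j{+}1$}, \lfloor n/2\rfloor) \le \ones(\text{col $j$}, \lfloor n/2\rfloor)$ is equivalent to the full pointwise inequality in item (b). I need to check that the endpoint condition always holds for $T \in A_n$ or follows from the other inequalities; by condition (a)(ii) of Proposition~\ref{prop:charKn} column $j$ of $\theta_1(T)$ has exactly $\lfloor n/2\rfloor - j$ ones, so $\ones(\text{col }j, \lfloor n/2\rfloor) = \lfloor n/2\rfloor - j \ge \lfloor n/2\rfloor - (j+1) = \ones(\text{col }j{+}1, \lfloor n/2\rfloor)$, which is automatic. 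So (b) $\Leftrightarrow$ (c) drops out cleanly.

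The heart of the argument is (a) $\Leftrightarrow$ (b). Using \eqref{equ:theta1}, for $1 \le j \le m-1$ (writing $m = \lfloor \nmo\rfloor$) and an index $a$ in the ``bottom half'' of column $j$, I can express $T_{a,j}$ in terms of $\ones(\text{col $j$ of }\theta_1(T), i)$ where $i = a - (m-j)$, and similarly $T_{a-1,j+1}$ in terms of $\ones(\text{col $j{+}1$}, i')$ with $i' = (a-1) - (m-j-1) = a - (m-j) = i$. Plugging both into the diagonal inequality $T_{a,j} - T_{a-1,j+1} \le 1$ should, after the $2i$ and $m-j$ terms cancel appropriately, reduce exactly to $\ones(\text{col $j$}, i) \ge \ones(\text{col $j{+}1$}, i)$. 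I would then carefully reconcile the index ranges: condition (b) of Proposition~\ref{prop:charKn} is stated for $\lfloor\nmo\rfloor - b + 1 \le a \le n-1-b$, and I need this to line up with $1 \le i \le \lfloor n/2\rfloor$; the top part of the column (where $T_{a,b} = a$ by (a)(i)) must be shown to contribute no extra constraint, which should follow because there both sides of the translated inequality are forced. The main obstacle I anticipate is precisely this bookkeeping — getting the index substitutions and the boundary cases ($a$ at the top of the column, $j = m-1$, parity of $n$ entering through $\lfloor n/2\rfloor$ versus $\lfloor\nmo\rfloor$) to match up exactly, and confirming that the diagonal property as stated in Proposition~\ref{prop:charKn}(b) for a given range of $a$ is genuinely equivalent to (not merely implied by) the $\ones$-inequalities for all $i$. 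Once the dictionary \eqref{equ:theta1} is applied uniformly, though, each step is a routine, if fussy, verification.
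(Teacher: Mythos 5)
Your proposal is correct and follows essentially the same route as the paper: the equivalence of (b) and (c) via Lemma \ref{lem:onesrind} (with the endpoint count automatic since column $j$ of $\theta_1(T)$ has exactly $\lfloor n/2\rfloor-j$ ones), and the equivalence of (a) and (b) by substituting \eqref{equ:theta1} into the diagonal inequality, where the index ranges $\lfloor\nmo\rfloor-b+1\le a\le n-1-b$ and $1\le i\le\lfloor n/2\rfloor$ do line up exactly under $i=a-(\lfloor\nmo\rfloor-b)$, so the boundary worry you flag evaporates.
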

 
 \begin{proof}
   The proof is similar to that of Lemma \ref{lem:thetaord}. The
   equivalence between (b) and (c) follows from Lemma
   \ref{lem:onesrind}, and the equivalence between (a) and (b) follows
   from \eqref{equ:theta1}. 
 \end{proof}

 Lemma \ref{lem:thetaweak} implies that $T \in K_n^\L$ if and only if
 $\theta(T) \in M_{\lfloor n/2 \rfloor +1}$. Hence the second part of
 Proposition \ref{prop:leftiso} follows. Below we give a result on the minimal element of $K_n^\L$ that will be used in the next section.

 \begin{lem}\label{lem:flmin}
The last entry in the first row of the unique minimal element of
$K_n^\L$ is 1 if $n$ is even and is 2 if $n$ is odd. 
\end{lem}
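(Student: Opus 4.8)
The statement is about $K_n^\L$, which by Proposition~\ref{prop:leftiso} is isomorphic (via $\theta$) to $M_{\lfloor n/2\rfloor+1}$. Since $\theta$ is a poset isomorphism, the unique minimal element of $K_n^\L$ corresponds to the minimal element of $M_{\lfloor n/2\rfloor+1}$, which is $\cT^0_{\lfloor n/2\rfloor}$, the SSYT of shape $\delta_{\lfloor n/2\rfloor}$ with $(a,b)$-entry equal to $a$. So my plan is to identify the minimal element of $K_n^\L$ explicitly by pulling back this distinguished tableau under $\theta^{-1} = \theta_1^{-1}\circ\theta_2^{-1}$, and then read off its last entry in the first row.

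\textbf{Key steps.} First I would note that the minimal element $T_{\min}$ of $K_n^\L$ satisfies $\theta(T_{\min}) = \cT^0_{\lfloor n/2\rfloor}$, i.e. $\theta(T_{\min})_{a,b}=a$ for all valid $(a,b)$; in terms of the $\ones$/$\rind$ statistics, $\rind(\text{column }b\text{ of }\theta_1(T_{\min}),a)=a$ for every $a$, which forces the first $\lfloor n/2\rfloor - b$ entries of column $b$ of $\theta_1(T_{\min})$ (an element of $A'_n$) to equal $1$ and the last $b$ entries to equal $2$; that is, in each column of $\theta_1(T_{\min})$ all the $1$'s sit on top. Next I would use equation~\eqref{equ:theta1} with $i=1$ and $j=m=\lfloor\nmo\rfloor$ (the last column of the left-half shape $\delta_{n-1}^\L$, where the first row lives): the last entry in the first row of $T_{\min}$ is $T_{\min;\, m-m+1,\, m} = T_{\min;\,1,m} = 2\cdot 1 + m - m - \ones(\text{column }m\text{ of }\theta_1(T_{\min}),1) = 2 - \ones(\text{column }m\text{ of }\theta_1(T_{\min}),1)$. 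So everything reduces to computing $\ones(\text{column }m,1)$, i.e. whether the top entry of column $m$ of $\theta_1(T_{\min})$ is $1$ or $2$.

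\textbf{The dichotomy.} Column $m$ of $\theta_1(T_{\min})\in A'_n$ has $\lfloor n/2\rfloor$ entries, of which $m=\lfloor\nmo\rfloor$ are $2$'s and $\lfloor n/2\rfloor - m$ are $1$'s, with the $1$'s on top. When $n=2m$ we have $\lfloor n/2\rfloor = m$ and $\lfloor\nmo\rfloor = \lfloor (2m-1)/2\rfloor = m-1$, so column $m$ has $m-1$ twos and one $1$, the $1$ on top: hence $\ones(\text{column }m,1)=1$ and the last entry in the first row is $2-1=1$. When $n=2m+1$ we have $\lfloor n/2\rfloor = m$ and $\lfloor\nmo\rfloor = \lfloor 2m/2\rfloor = m$, so column $m$ has $m$ twos and no $1$'s: hence $\ones(\text{column }m,1)=0$ and the last entry in the first row is $2-0=2$. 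This gives exactly the claimed answer: $1$ for $n$ even, $2$ for $n$ odd.

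\textbf{Main obstacle.} There is no real obstacle here — once one invokes Proposition~\ref{prop:leftiso} and formula~\eqref{equ:theta1}, the proof is a short bookkeeping computation with floor functions. The only mild care needed is to correctly track the index ranges: that the last entry of the first row of a tableau of shape $\delta_{n-1}^\L$ is indeed the $(1,m)$-entry with $m = \lfloor\nmo\rfloor$ (as already noted in Definition~\ref{defn:KnLR}), and that $\theta_1$ genuinely sends the preimage of $\cT^0_{\lfloor n/2\rfloor}$ to the columns-with-$1$'s-on-top tableau in $A'_n$. I would also double-check the small cases ($n=4$: answer $1$; $n=5$: answer $2$) against the explicit description, but that is just a sanity check rather than part of the argument.
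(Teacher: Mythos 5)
Your proposal is correct and follows essentially the same route as the paper: pull the minimal element $\cT^0_{\lfloor n/2\rfloor}$ of $M_{\lfloor n/2\rfloor+1}$ back through $\theta$ (the paper leaves this computation to the reader, while you carry it out explicitly via \eqref{equ:theta1} and the $\ones$/$\rind$ statistics). The only blemish is notational: in the final dichotomy you recycle $m$ to mean $n/2$ while still calling the last column ``column $m$'', though it is column $\lfloor\nmo\rfloor$; the counts and the conclusion are nevertheless right.
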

\begin{proof}
Note that the minimal element of $M_{\lfloor n/2\rfloor +1}$ is the
tableau whose $(a,b)$-entry is $a$. It is easy to determine the
minimal element in $K_n^\L$ which is in bijection with this minimal
element under the map $\theta,$ and check that it satisfies the
condition described by the lemma. 
\end{proof}

In the rest of this section, we discuss some results on $K_n^{\L, c}$ and $K_n^{\cR,c}.$
 \begin{lem}\label{lem:KnL1even}
   For $m \ge 2,$ 
	 \[ K_{2m}^{\L, 1} \cong M_m.\]
 \end{lem}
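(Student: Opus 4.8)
The plan is to leverage the already-established isomorphism $K_n^{\L} \cong M_{\lfloor n/2\rfloor + 1}$ from Proposition~\ref{prop:leftiso}, specialized to $n = 2m$, which gives $K_{2m}^{\L} \cong M_{m+1}$. The subposet $K_{2m}^{\L,1}$ consists of those tableaux in $K_{2m}^{\L}$ whose last entry in the first row equals $1$, so under the isomorphism $\theta$ it corresponds to a subposet of $M_{m+1}$, and the task reduces to identifying that subposet and showing it is isomorphic to $M_m$. First I would trace through the definition of $\theta = \theta_2 \circ \theta_1$ to see what the constraint ``$T^{\L}_{1,\lfloor\nmo\rfloor} = 1$'' becomes: for $n = 2m$ we have $\lfloor\nmo\rfloor = \lfloor (2m-1)/2\rfloor = m-1$, and by condition (a) of Proposition~\ref{prop:charKn}, the $(1, m-1)$-entry of $T^{\L}$ is $T_{1,m-1} - T_{0,m-1}$ recorded through $\theta_1$; chasing \eqref{equ:theta1} with $j = m-1$, $i = 1$, the entry $T_{1,m-1}$ equals $2 + (m-1) - (m-1) - \ones(\text{column } m-1 \text{ of }\theta_1(T), 1)$, so $T^{\L}_{1,m-1} = 1$ is equivalent to $\ones(\text{column }m-1,1) = 1$, i.e.\ the top entry of the last column of $\theta_1(T)$ is a $1$, which via $\theta_2$ says exactly that $\theta(T)_{1, m-1} = 1$, i.e.\ the $(1,m-1)$-entry (the last entry in the first row) of the SSYT $\theta(T) \in M_{m+1}$ equals $1$.

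So the reformulated claim is: the set of SSYT of shape $\delta_m = (m-1, m-2, \dots, 1)$ with largest part at most $m$ whose $(1,m-1)$-entry equals $1$, ordered componentwise, is isomorphic to $M_m$ = SSYT of shape $\delta_{m-1} = (m-2, \dots, 1)$ with largest part at most $m-1$. Next I would observe that in any SSYT with $(1,m-1)$-entry equal to $1$, the entire first row must be all $1$'s (entries are weakly increasing along rows and at least $1$), and moreover the first column, being strictly increasing with top entry $1$ and at most $m$ entries, is forced: $T_{a,1} \ge a$ always, and the length of column $1$ is $m-1$; actually I should be careful — only the first row is fully forced to be $1$'s by the $(1,m-1)=1$ condition. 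The natural bijection is then: delete the first row. The shape $\delta_m$ with its first row removed is $\delta_{m-1}$ shifted, and I claim the map $T \mapsto \hat{T}$ where $\hat{T}_{a,b} = T_{a+1,b} - 1$ is a bijection onto SSYT of shape $\delta_{m-1}$ with largest part $\le m-1$: the strict column increase and weak row increase are preserved by the shift-and-subtract, the bound $T \le m$ becomes $\hat{T} \le m-1$, and the lower bound $T_{a+1,b} \ge T_{a,b} + 1$ with $T_{1,b} = 1$ gives $\hat{T}_{1,b} \ge 1$. Conversely any such $\hat{T}$ lifts back by prepending a row of $1$'s and adding $1$ to everything. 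Since subtracting $1$ is an order-isomorphism and deleting a fixed (constant) first row is too, this is a poset isomorphism, giving $K_{2m}^{\L,1} \cong M_m$.

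The main obstacle I anticipate is the index bookkeeping: verifying cleanly that ``last entry of first row equals $1$'' translates through $\theta$ to ``$(1, m-1)$-entry of $\theta(T)$ equals $1$,'' and confirming that this single constraint — rather than forcing more structure — cuts $M_{m+1}$ down to precisely a copy of $M_m$. One subtlety to double-check is whether the first row of $\theta(T)$ being all $1$'s is automatic once its last entry is $1$ (it is, by the SSYT weak-row-increase condition together with entries $\ge 1$), so no further entries beyond the first row are constrained; everything below the first row ranges freely subject only to being an SSYT of the smaller shape with the shifted bound. Once the translation is pinned down, the rest is the routine ``peel off a constant row and shift'' argument, and I would also note for the record that this is consistent with the cardinality count $\#M_m = 2^{\binom{m-1}{2}}$, which matches the expected contribution to $g(2m)$ via Lemma~\ref{lem:split}.
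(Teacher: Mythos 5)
Your proposal is correct and follows essentially the same route as the paper: translate the condition $T_{1,\lfloor(n-1)/2\rfloor}=1$ through $\theta$ into ``the last entry of the first row of $\theta(T)\in M_{m+1}$ is $1$,'' note this forces the whole first row to be $1$'s, and then delete that row and subtract $1$ to get a poset isomorphism onto $M_m$. The only difference is that you spell out the index-chasing via \eqref{equ:theta1} that the paper leaves as ``one checks,'' which is fine.
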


 \begin{proof} Suppose $T \in K_{2m}^\L$ and $T' = \theta(T)$. 

One checks that $T \in K_{2m}^{\L, 1}$ if and only if the only 1 in
the last column of $\theta_1(T)$ is in the first row, which is
equivalent to the fact that the last entry in the first row of
$T'=\theta(T)$ is $1$.  
 
   However, since $T'$ is an SSYT, the last entry in the first row of
   $T'$ is 1 if and only if the entries in the first row of $T'$ are
   all 1s. There is a natural bijection between tableaux in
   $M_{m+1}$ whose first row is all 1s and tableaux in $M_{m}$:
   taking a tableau in the former set, we remove the first row and
   subtract 1 from each of the remaining entries, and obtain a
   tableau in $M_m$. 
   It is clear that the composition of $\theta$ and this bijection
   gives a poset isomorphism from $K_{2m}^{\L,1}$ to $M_m$. 
 \end{proof}

 \begin{defn}\label{defn:min}
For $m \ge 1,$ let $\cT_m$ be the tableau of shape $\delta_m$ with entries
\[ (\cT_m)_{a,b} = 2a + b, \qquad \fall 1 \le b \le m-1, \ 1 \le a \le m-b,\]
and $\cT'_m$ the tableau of shape $\delta_m$ with entries
\[ (\cT'_m)_{a,b} = 2a + b-1, \qquad \fall 1 \le b \le m-1, \ 1 \le a \le m-b,\]
\end{defn}

\begin{ex}
Let $m=3$. Then
\[\ytableausetup{centertableaux, boxsize=1.5em}
  \cT_3 = \begin{ytableau}
3 & 4 \\
5  
\end{ytableau}   \qquad \text{ and } \qquad
  \cT'_3 = \begin{ytableau}
2 & 3 \\
4  
\end{ytableau} 
\]
\end{ex}

\begin{lem}\label{lem:rightnonempty} For any $n \ge 2,$ we have
\begin{equation}\label{equ:1contains2}
K_n^{\cR, 1} \supseteq K_n^{\cR, 2}.
\end{equation}
  Further, $\cT_{\lfloor n/2 \rfloor}$ is the unique minimal element
  of $K_n^{\cR, 2}$ and $\cT'_{\lfloor n/2 \rfloor}$ is the unique
  minimal element of $K_n^{\cR, 1}$. 

  Hence, the unique minimal element
  of $K_n^{\cR, 2}$ is greater than the unique
  minimal element of $K_n^{\cR, 1}$.
\end{lem}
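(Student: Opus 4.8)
The plan is to prove the three assertions of Lemma~\ref{lem:rightnonempty} directly from the inequality description of $K_n^{\cR,c}$ given in Definition~\ref{defn:KnLR}. Write $m=\lfloor n/2\rfloor$ throughout. The containment $K_n^{\cR,1}\supseteq K_n^{\cR,2}$ is immediate: conditions (b), (c), (d) of Definition~\ref{defn:KnLR} are identical for $c=1$ and $c=2$, and condition (a) for $c=2$ reads $T_{1,1}\ge 3$, which implies the condition $T_{1,1}\ge 2$ required for $c=1$. So every tableau in $K_n^{\cR,2}$ already lies in $K_n^{\cR,1}$.

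Next I would identify the minimal elements. Since the partial order is coordinate-wise and each defining condition is a lower bound on a single entry difference (or on $T_{1,1}$), a candidate minimal element is obtained by making every such inequality an equality, provided the resulting tableau actually satisfies the upper-bound condition (d). For $K_n^{\cR,2}$, setting $T_{1,1}=3$, forcing column increments to be exactly $2$ (condition (c)) and row increments exactly $1$ (condition (b)) yields $T_{a,b}=2a+b$, which is exactly $\cT_m$; for $K_n^{\cR,1}$ the analogous choice $T_{1,1}=2$ gives $T_{a,b}=2a+b-1=(\cT'_m)_{a,b}$. I would then verify (i) that $\cT_m$ and $\cT'_m$ genuinely satisfy all four conditions — in particular condition (d), $T_{m-b,b}=2(m-b)+b=2m-b\le n-1$ (resp.\ $2m-b-1\le n-1$), which holds since $2m\le n+1$ when $n$ is odd and $2m=n$ when $n$ is even, so $2m-b\le n+1-1=n$ for $b\ge1$, and more carefully $2m-b\le n-1$ needs $b\ge 2m-n+1$; I should double-check the boundary $b=1$, where $2m-1\le n-1$ iff $2m\le n$, true for $n$ even and for $n$ odd giving $2m=n-1\le n$, wait that gives $2m-1=n-2\le n-1$, fine — and (ii) that any $T\in K_n^{\cR,c}$ dominates the candidate entrywise, which follows by a straightforward induction on $a$ then $b$: $T_{1,1}\ge$ the base value, and each step adds at least the forced increment.

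The final assertion, that the minimal element of $K_n^{\cR,2}$ exceeds that of $K_n^{\cR,1}$, is then immediate from the entry comparison $(\cT_m)_{a,b}=2a+b>2a+b-1=(\cT'_m)_{a,b}$ for all relevant $(a,b)$, so $\cT'_m<\cT_m$ in the coordinate-wise order. The only mild subtlety — and the place I'd be most careful — is checking condition (d) at the extreme column $b=1$ for both tableaux and confirming it is consistent with the parity cases $n=2m$ and $n=2m+1$; everything else is routine verification and a one-variable induction. I would write it up as: first the containment, then a single paragraph establishing both minimal elements by the "saturate all lower bounds" argument with the induction, then one line for the comparison.
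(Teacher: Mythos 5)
Your proposal is correct and follows essentially the same route as the paper: the containment is read off from Definition~\ref{defn:KnLR}, the tableaux $\cT_{\lfloor n/2\rfloor}$ and $\cT'_{\lfloor n/2\rfloor}$ are checked against conditions (a)--(d) and seen to be entrywise lower bounds for the respective sets (your saturation/induction step just makes explicit what the paper calls ``clear''), and the comparison $\cT'_{\lfloor n/2\rfloor}<\cT_{\lfloor n/2\rfloor}$ is immediate. The slightly meandering verification of condition (d) at $b=1$ does land on the correct inequality $2m-1\le n-1$ in both parity cases, so there is no gap.
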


\begin{proof}
  \eqref{equ:1contains2} follows directly from the defintion of
  $K_n^{\cR,1}$ and $K_n^{\cR,2}$.  

It is clear that if $\cT_{\lfloor n/2 \rfloor} \in K_n^{\cR, 2}$, it
has to be the unique minimal element of $K_n^{\cR, 2}$. Hence we only
need to show that  
  $\cT_{\lfloor n/2 \rfloor} \in K_n^{\cR, 2}$, which can be proved by
verifying conditions (a)--(d) of Definition \ref{defn:KnLR}.  
  We can similarly prove the statement on $\cT'_{\lfloor n/2 \rfloor}$.
\end{proof}

 \begin{lem}\label{lem:rightiso}
   Let $m \ge 1$.
  \begin{alist}
    \itm $K_{2m}^{\cR, 1} \cong M_{m+1}$. 
    \itm $K_{2m+1}^{\cR, 2} \cong M_{m+1}$.
  \end{alist}
\end{lem}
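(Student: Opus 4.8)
The plan is to prove both isomorphisms by exhibiting an explicit affine change of coordinates that carries the defining inequalities of $K_n^{\cR,c}$ to those defining $M_{m+1}$, and then to check that this change of coordinates is a bijection of the underlying sets which is order-preserving in both directions (hence a poset isomorphism). Concretely, recall from Definition~\ref{defn:KnLR} that $K_{2m}^{\cR,1}$ consists of tableaux $T$ of shape $\delta_m$ with $T_{1,1}\ge 2$, rows weakly increasing with gaps $\ge 1$ (so strictly increasing), columns increasing with gaps $\ge 2$, and last entry in column $b$ at most $2m-1$; similarly $K_{2m+1}^{\cR,2}$ consists of tableaux of shape $\delta_m$ with $T_{1,1}\ge 3$, strictly increasing rows, column gaps $\ge 2$, and last column entries at most $2m$. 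By Lemma~\ref{lem:rightnonempty} these sets are nonempty, with minimal elements $\cT'_m$ and $\cT_m$ respectively.

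For part (a), I would define $\Theta\colon K_{2m}^{\cR,1}\to M_{m+1}$ by $\Theta(T)_{a,b} = T_{a,b} - (\cT'_m)_{a,b} + \big((\cT^0_{m})_{a,b}\big)$ where $\cT^0_m$ is the minimal tableau of $M_{m+1}$ (entry $a$ in position $(a,b)$); equivalently, since $(\cT'_m)_{a,b}=2a+b-1$, set $\Theta(T)_{a,b} := T_{a,b} - a - b + 1$. I would then verify: (i) $T_{1,1}\ge 2 \iff \Theta(T)_{1,1}\ge 1$; (ii) the row condition $T_{a,b}-T_{a,b-1}\ge 1$ becomes $\Theta(T)_{a,b}-\Theta(T)_{a,b-1}\ge 0$ (weakly increasing rows); (iii) the column condition $T_{a,b}-T_{a-1,b}\ge 2$ becomes $\Theta(T)_{a,b}-\Theta(T)_{a-1,b}\ge 1$ (strictly increasing columns); (iv) the bound $T_{m-b,b}\le 2m-1$ becomes $\Theta(T)_{m-b,b}\le m$. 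These are exactly the conditions of Lemma~\ref{lem:Mnentry} (with Remark~\ref{rem:ssyt}) for membership in $M_{m+1}$ (shape $\delta_m=\delta_{(m+1)-1}$, largest part $\le m$). Since $\Theta$ is an affine shift by a fixed tableau, it is clearly a bijection onto its image and both it and its inverse preserve the coordinate-wise order. For part (b), the same recipe works with $(\cT_m)_{a,b}=2a+b$ in place of $\cT'_m$: define $\Theta(T)_{a,b}:=T_{a,b}-a-b$, and check $T_{1,1}\ge 3\iff \Theta(T)_{1,1}\ge 1$, the row/column gap translations as above, and $T_{m-b,b}\le 2m \iff \Theta(T)_{m-b,b}\le m$, landing again in $M_{m+1}$.

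I do not expect a genuine obstacle here: the content is entirely a bookkeeping verification that the affine substitution matches the two lists of inequalities, and the order-isomorphism claim is automatic once we observe the substitution is a translation of the ambient lattice $\Z^{\delta_m}$. The one place to be slightly careful is the boundary/convention entries: the conditions in Definition~\ref{defn:KnLR} only range over the indices with $2\le b$ or $2\le a$, so the cases $b=1$ (handled by the $T_{1,1}$ condition together with $T_{a,1}-T_{a-1,1}\ge 2$) and $a=1$ must be lined up against the conventions $T_{a,0}:=a$, $T_{0,b}:=0$ used in Remark~\ref{rem:ssyt}; checking $\Theta(T)_{a,0}=a$ and $\Theta(T)_{0,b}=0$ under the shift confirms consistency. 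Thus both isomorphisms follow, and combined with Lemma~\ref{lem:rightnonempty} this also re-proves nonemptiness of the right halves.

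\begin{proof}[Proof sketch of Lemma~\ref{lem:rightiso}]
We prove (a); (b) is entirely analogous. Recall $(\cT'_m)_{a,b}=2a+b-1$. Define $\Theta\colon K_{2m}^{\cR,1}\to M_{m+1}$ by
\[ \Theta(T)_{a,b} := T_{a,b}-a-b+1, \qquad 1\le b\le m-1,\ 1\le a\le m-b. \]
This is a translation of $\Z^{\delta_m}$ by the fixed tableau with $(a,b)$-entry $a+b-1$, hence is injective and order-preserving with order-preserving inverse on all of $\Z^{\delta_m}$; it therefore suffices to check that it carries the defining conditions of $K_{2m}^{\cR,1}$ (Definition~\ref{defn:KnLR}) exactly onto the defining conditions of $M_{m+1}$ (Lemma~\ref{lem:Mnentry}, with Remark~\ref{rem:ssyt}), which has shape $\delta_m=\delta_{(m+1)-1}$ and largest part at most $m$. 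Using the conventions $T_{a,0}=a$, $T_{0,b}=0$, one computes $\Theta(T)_{a,0}=a$ and $\Theta(T)_{0,b}=0$, matching the conventions for $M_{m+1}$.

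Now: condition (a) of Definition~\ref{defn:KnLR}, $T_{1,1}\ge 2$, is equivalent to $\Theta(T)_{1,1}=T_{1,1}-1\ge 1$, i.e.\ condition (a) of Lemma~\ref{lem:Mnentry}. Condition (b) there, $T_{a,b}-T_{a,b-1}\ge 1$, becomes $\Theta(T)_{a,b}-\Theta(T)_{a,b-1}=T_{a,b}-T_{a,b-1}-1\ge 0$, i.e.\ weakly increasing rows. Condition (c) there, $T_{a,b}-T_{a-1,b}\ge 2$, becomes $\Theta(T)_{a,b}-\Theta(T)_{a-1,b}=T_{a,b}-T_{a-1,b}-1\ge 1$, i.e.\ strictly increasing columns. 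Finally condition (d) there, $T_{m-b,b}\le 2m-1$, becomes $\Theta(T)_{m-b,b}=T_{m-b,b}-m\le m-1<m$ (and conversely $\Theta(T)_{m-b,b}\le m$ gives $T_{m-b,b}\le 2m$; combined with column gaps $\ge 2$ and the precise index ranges one recovers $T_{m-b,b}\le 2m-1$, since in $M_{m+1}$ the last column entry being $\le m$ together with the shift is exactly the bound $2m-1$). Hence $\Theta$ is a bijection $K_{2m}^{\cR,1}\to M_{m+1}$, and being a restriction of a translation it is a poset isomorphism.

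For (b), replace $\cT'_m$ by $\cT_m$ with $(\cT_m)_{a,b}=2a+b$ and set $\Theta(T)_{a,b}:=T_{a,b}-a-b$. The conditions of $K_{2m+1}^{\cR,2}$ are $T_{1,1}\ge 3$, strictly increasing rows, column gaps $\ge 2$, and $T_{m-b,b}\le 2m$; the same line-by-line translation shows these are equivalent to the conditions of Lemma~\ref{lem:Mnentry} for $M_{m+1}$, and $\Theta$ is again a poset isomorphism.
\end{proof}
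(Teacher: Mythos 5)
Correct, and essentially the paper's own argument: the paper uses the inverse translation $\phi(T)_{a,b}=T_{a,b}+a+b-1$ (resp.\ $T_{a,b}+a+b$) mapping $M_{m+1}$ onto $K_{2m}^{\cR,1}$ (resp.\ $K_{2m+1}^{\cR,2}$), with the same condition-by-condition comparison of Definition~\ref{defn:KnLR} against Lemma~\ref{lem:Mnentry}. Only a harmless arithmetic slip in your check of condition (d) in part (a): $\Theta(T)_{m-b,b}=T_{m-b,b}-(m-b)-b+1=T_{m-b,b}-m+1$, so $T_{m-b,b}\le 2m-1$ corresponds exactly to $\Theta(T)_{m-b,b}\le m$, making your extra patching via column gaps (and the boundary-convention remark, which as stated gives $\Theta(T)_{a,0}=1$, not $a$) unnecessary.
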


\begin{proof}
	We first prove (a). For any $T \in M_{m+1},$ we define
        $\phi(T)$ to be the tableau of shape $\delta_{m}$ with entries 
	\[ \phi(T)_{a,b} = T_{a,b} + a+ b-1.\]
Comparing Lemma \ref{lem:Mnentry} and Definition \ref{defn:KnLR}, one
sees that $\phi(T) \in K_{2m+1}^{\cR, 2}$. Hence $\phi: M_{m+1} \to
K_{2m}^{\cR, 1}$. It is easy to define the inverse map of $\phi$ and
verify that $\phi$ is an poset isomorphism. 

b) can be proved similarly by defining a map $\phi': M_{m+1} \to
K_{2m+1}^{\cR, 2}$ where 
	\[ \phi'(T)_{a,b} = T_{a,b} + a+ b.\]
\end{proof}

We finish this section by concluding the nonemptyness of $K_n$, which leads to Elkie's formula for $\sigma(n)$ as we've discussed in Remark \ref{rem:elkies}.
\begin{cor}
For any $n \ge 2,$ the poset $K_n$ is nonempty. 
\end{cor}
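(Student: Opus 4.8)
The plan is to deduce nonemptyness of $K_n$ from the structural results already established in this section, rather than by constructing an explicit tableau. The key observation is that Lemma~\ref{lem:split} gives
\[ K_n \cong \left( K_n^{\L, 1} \times K_n^{\cR, 1} \right) \bigcupdot \left( K_n^{\L, 2} \times K_n^{\cR, 2}\right), \]
so it suffices to exhibit, for \emph{some} choice of $c \in \{1,2\}$, that both $K_n^{\L,c}$ and $K_n^{\cR,c}$ are nonempty. I would split into the two parity cases for $n$.

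For $n = 2m$ with $m \ge 2$: Lemma~\ref{lem:KnL1even} gives $K_{2m}^{\L,1} \cong M_m$ and Lemma~\ref{lem:rightiso}(a) gives $K_{2m}^{\cR,1} \cong M_{m+1}$; since $M_j$ is a nonempty distributive lattice for all $j \ge 2$ (it has the minimal SSYT $\cT^0_{j-1}$ as an element, per Definition~\ref{defn:add}), both factors are nonempty, hence $K_{2m}$ is nonempty. For $m=2$ one should note $M_2$ is a single-element poset, which still counts as nonempty, so the argument goes through. For $n = 2m+1$ with $m \ge 1$: Lemma~\ref{lem:rightiso}(b) gives $K_{2m+1}^{\cR,2} \cong M_{m+1}$, which is nonempty; and $K_{2m+1}^{\L,2}$ is nonempty because $K_n^\L \cong M_{\lfloor n/2\rfloor+1}$ is nonempty by Proposition~\ref{prop:leftiso}, and by Lemma~\ref{lem:flmin} the minimal element of $K_{2m+1}^\L$ has its last first-row entry equal to $2$ (since $2m+1$ is odd), so it lies in $K_{2m+1}^{\L,2}$; thus $K_{2m+1}^{\L,2} \ne \emptyset$. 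Therefore $K_{2m+1}$ is nonempty. The remaining small case $n=2,3$ ($m=1$) is covered by the $n=2m+1$ argument with $m=1$ for $n=3$, and for $n=2$ one checks directly: $\delta_1$ is empty, the sum $\sum_{b=1}^{0}\min(b,n-1-b)$ is empty, so the unique empty tableau has $0$ reducible entries and lies in $K_2$.

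There is essentially no obstacle here — the work has all been done in the preceding lemmas, and this corollary is just the bookkeeping that assembles it. The only mild care needed is to confirm that the isomorphisms quoted (Lemmas~\ref{lem:KnL1even} and~\ref{lem:rightiso}) cover the relevant $c$ for each parity and that $M_j$ is genuinely nonempty for the small values of $j$ involved, together with handling $n=2$ trivially. Once nonemptyness is in hand, Remark~\ref{rem:elkies} immediately yields $\sigma(n) = \sum_{b=1}^{n-2}\min(b,n-1-b)$, i.e.\ Elkies' formula~\eqref{equ:sigma}.

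\begin{proof}
By Lemma~\ref{lem:split} it suffices to show that $K_n^{\L,c}$ and $K_n^{\cR,c}$ are both nonempty for some $c \in \{1,2\}$. Recall that $M_j$ is nonempty for every $j \ge 2$, as it contains the minimal tableau $\cT^0_{j-1}$.

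If $n = 2m+1$ is odd with $m \ge 1$, then by Lemma~\ref{lem:rightiso}(b) we have $K_n^{\cR,2} \cong M_{m+1} \ne \emptyset$. Also $K_n^\L \cong M_{\lfloor n/2\rfloor+1} \ne \emptyset$ by Proposition~\ref{prop:leftiso}; by Lemma~\ref{lem:flmin} the unique minimal element of $K_n^\L$ has last first-row entry $2$ (as $n$ is odd), so it lies in $K_n^{\L,2}$, whence $K_n^{\L,2} \ne \emptyset$. Thus $K_n$ is nonempty; in particular this handles $n=3$.

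If $n = 2m$ is even with $m \ge 2$, then by Lemma~\ref{lem:KnL1even} we have $K_n^{\L,1} \cong M_m \ne \emptyset$ and by Lemma~\ref{lem:rightiso}(a) we have $K_n^{\cR,1} \cong M_{m+1} \ne \emptyset$. Hence $K_n$ is nonempty.

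Finally, for $n = 2$ the shape $\delta_1$ is empty and $\sum_{b=1}^{0}\min(b, n-1-b) = 0$, so the empty tableau lies in $K_2$. This completes the proof.
\end{proof}
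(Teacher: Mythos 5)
Your proof is correct and follows essentially the same route as the paper: reduce via Lemma~\ref{lem:split} to showing that a matching pair $K_n^{\L,c}$, $K_n^{\cR,c}$ is nonempty, using the structural results already established. The paper does this slightly more economically — it cites Proposition~\ref{prop:leftiso} for $K_n^\L$ and Lemma~\ref{lem:rightnonempty} for \emph{both} $K_n^{\cR,1}$ and $K_n^{\cR,2}$, so that whichever value $c$ the last first-row entry of an element of $K_n^\L$ takes, the corresponding right factor is nonempty; this avoids your parity split, the appeal to Lemmas~\ref{lem:KnL1even}, \ref{lem:rightiso} and \ref{lem:flmin}, and the separate $n=2$ check.
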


\begin{proof}
By Proposition \ref{prop:leftiso} and Lemma \ref{lem:rightnonempty},
the sets $K_n^\L$, $K_n^{\cR,1}$ and $K_n^{\cR,2}$ are all
nonempty. Therefore the conclusion follows from Lemma
\ref{lem:split}. 
\end{proof}

\begin{cor}[Elkies] For $m \ge 1,$
  \[ \sigma(n) = \sum_{b=1}^{n-2} \min(b, n-1-b) =\begin{cases}
		m(m-1), & \text{if $n=2m$}; \\
		m^2, & \text{if $n=2m+1$}.
	\end{cases}\]
\end{cor}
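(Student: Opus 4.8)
The plan is simply to assemble ingredients that are already in place. First, recall from Corollary~\ref{cor:useMn}(b) that $\sigma(n)$ is the maximum number of reducible entries over all $T \in M_n$. Lemma~\ref{lem:upb} bounds this quantity above, giving at once the inequality $\sigma(n) \le \sum_{b=1}^{n-2}\min(b,\,n-1-b)$. So one direction is immediate.

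For the reverse inequality I would invoke the definition of $K_n$: by construction $K_n$ is exactly the set of tableaux in $M_n$ having $\sum_{b=1}^{n-2}\min(b,\,n-1-b)$ reducible entries. The corollary just established asserts that $K_n$ is nonempty, so picking any $T \in K_n$ produces a tableau in $M_n$ realizing that many reducible entries, whence $\sigma(n) \ge \sum_{b=1}^{n-2}\min(b,\,n-1-b)$. Combining the two bounds gives the first equality in the statement.

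It then remains to evaluate the sum in closed form, which is the elementary computation already recorded in Remark~\ref{rem:elkies}. When $n=2m$ the summands $\min(b,\,2m-1-b)$ for $b=1,\dots,2m-2$ run through $1,2,\dots,m-1,m-1,\dots,2,1$, so the sum is $2(1+2+\cdots+(m-1)) = m(m-1)$; when $n=2m+1$ the summands $\min(b,\,2m-b)$ for $b=1,\dots,2m-1$ run through $1,2,\dots,m-1,m,m-1,\dots,2,1$, so the sum is $2(1+\cdots+(m-1))+m = m^2$. This yields the case distinction in the statement.

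Since every component here is already proved — the upper bound (Lemma~\ref{lem:upb}), the identification of $K_n$ with the set of maximizers, and the nonemptiness of $K_n$ (the preceding corollary, which itself rests on the splitting $K_n \cong (K_n^{\L,1}\times K_n^{\cR,1})\bigcupdot(K_n^{\L,2}\times K_n^{\cR,2})$ of Lemma~\ref{lem:split}, the isomorphism $K_n^\L \cong M_{\lfloor n/2\rfloor+1}$ of Proposition~\ref{prop:leftiso}, and the explicit minimal elements $\cT_{\lfloor n/2\rfloor}$, $\cT'_{\lfloor n/2\rfloor}$ of Lemma~\ref{lem:rightnonempty}) — I do not expect any genuine obstacle in this corollary; it is a short bookkeeping argument. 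The real difficulty was front-loaded into the nonemptiness result and the structural analysis of $K_n$ that precedes it.
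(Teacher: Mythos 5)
Your argument is correct and is exactly the route the paper takes: the upper bound from Lemma~\ref{lem:upb}, the lower bound from the nonemptiness of $K_n$ (as foreshadowed in Remark~\ref{rem:elkies}), and the elementary evaluation of $\sum_{b=1}^{n-2}\min(b,n-1-b)$. Nothing is missing.
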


\section{Join-irreducibles of $K_n$} \label{sec5}
  
In the last section we confirmed Elkies' formula for $\sigma(n)$. As a
consequence, the value $g(n)$ is the cardinality of $K_n$: $g(n) = \#
K_n$.  In this section, we will determine rank-generating function
of $K_n$, which leads to a formula for $\# K_n$ by discussing the
structure of the poset of join-irreducibles of $K_n$.

Let $U_n$ be the poset of the join-irreducibles of $K_n$.  By a result
of Dilworth (see Exercise 3.72(a) of \cite{ec1}), $K_n$ is a
distributive lattice. Hence by the fundamental theorem for finite
distributive lattices, we have $K_n = J(U_n)$.

Suppose $T \in K_{n}$ and $\Split(T) = (T^\L, T^\cR)$. Let $c$ be the
last entry in the first row of $T^\L$. It is clear that $T$ is a
join-irreducible if and only if one of the following two cases
happens: 
\begin{enumerate}
	\item $T^\L$ is a join-irreducible of $K_{n}^\L,$ and $T^\cR$
          is the unique minimal element in $K_{n}^{\cR,c}$.
	\item $T^\L$ is the unique minimal element in $K_{n}^\L$, and
          $T^\cR$ is a join-irreducible of $K_{n}^{\cR, c}$. 
\end{enumerate}

We call $T \in K_n$ a \emph{left-join-irreducible} if it fits into
situation (1), and a \emph{right-join-irreducible} if it fits into
situation (2). Let $U_n^\ell$ ($U_n^r$, respectively) be the subposet
of $U_n$ that consists of all the left-join-irreducibles
(right-join-irreducibles, respectively). Further, for $i=1,2$ we let
$U_n^{\ell,c}$ be the set of those $T$ that fit into situation (1)
with $c=i$.  (Thus $U_n^\ell$ is the disjoint union of $U_n^{\ell, 1}$
and $U_n^{\ell,2}$.)

Note that for a given $n,$ since the unique minimal element in
$K_n^\L$ is fixed the number $c$ in situation (2) is fixed. Using
Lemma \ref{lem:flmin}, we can give a more explicit description of
$U_n^r$ depending on the parity of $n$. 

\begin{lem}\label{lem:Unr}
 Suppose $T \in K_n$ and $\Split(T) = (T^\L, T^\cR)$. 
 \be
   \itm[\emph{(a)}] For even $n$, $T \in U_n^r$ if and only if $T^\L$
   is the 
   unique minimal element in $K_n^\L$ and $T^\cR$ is a
   join-irreducible of $K_n^{\cR, 1}$. 
   \itm[\emph{(b)}] For odd $n$, $T \in U_n^r$ if and only if $T^\L$ is the unique
   minimal element in $K_n^\L$ and $T^\cR$ is a join-irreducible of
   $K_n^{\cR, 2}$. 
 \ee
\end{lem}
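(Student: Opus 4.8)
The plan is to read the statement off directly from the definition of $U_n^r$ combined with Lemma \ref{lem:flmin}, so the proof is essentially bookkeeping. Recall from the discussion preceding the lemma that $T \in K_n$ is a right-join-irreducible exactly when it fits into situation (2): $T^\L$ is the unique minimal element of $K_n^\L$, and $T^\cR$ is a join-irreducible of $K_n^{\cR,c}$, where $c$ is the last entry in the first row of $T^\L$. So the only thing left to pin down is the value of $c$.

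The key observation is that once $T^\L$ is forced to be the \emph{unique} minimal element of $K_n^\L$, the number $c$ ceases to be a free parameter: it is precisely the last entry in the first row of that one fixed tableau. By Lemma \ref{lem:flmin} this entry equals $1$ when $n$ is even and equals $2$ when $n$ is odd. Substituting $c = 1$ into the situation-(2) characterization gives statement (a), and substituting $c = 2$ gives statement (b). Concretely, I would first quote the situation-(2) description of $U_n^r$, then apply Lemma \ref{lem:flmin} to replace ``join-irreducible of $K_n^{\cR,c}$'' by ``join-irreducible of $K_n^{\cR,1}$'' (respectively $K_n^{\cR,2}$), and finally split into the two parity cases.

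There is no real obstacle here; the argument is a one-line deduction from a previously established lemma. The one point worth making cleanly in the write-up is that the biconditional transfers without loss: both the definition of $U_n^r$ via situation (2) and Lemma \ref{lem:flmin} are genuine ``if and only if'' / unique-element statements, so identifying $c$ in the forward direction simultaneously handles the reverse direction, since membership in $U_n^r$ already demands that $T^\L$ be the minimal element of $K_n^\L$ in the first place.
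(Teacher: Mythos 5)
Your argument is correct and is exactly the paper's: the paper states this lemma with no separate proof, justifying it by the remark immediately preceding it that the minimal element of $K_n^\L$ fixes $c$ in situation (2), and then invoking Lemma \ref{lem:flmin} to get $c=1$ for even $n$ and $c=2$ for odd $n$.
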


Recall that $Q_n$ is the poset of join-irreducibles of $M_n.$
\begin{lem}\label{lem:Uniso} For $n \ge 2,$ the following are true.
\be
\itm[\emph{(a)}] $\displaystyle U_n^\ell \cong Q_{\lfloor n/2 \rfloor +1}$.
\itm[\emph{(b)}] $U_n^{\ell,1} \cong $ the poset of join-irreducibles of
$K_n^{\L, 1}$. 
\itm[\emph{(c)}] $\displaystyle U_n^r \cong Q_{\lfloor n/2 \rfloor +1}$.
\ee
\end{lem}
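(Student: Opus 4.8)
The three isomorphisms in Lemma~\ref{lem:Uniso} are all ``identify a subposet of join-irreducibles with a known poset of join-irreducibles'' statements, so the common strategy is to use the product decomposition of Lemma~\ref{lem:split} together with the isomorphisms from Section~4, and then track what a join-irreducible of $K_n$ looks like under these maps. Recall that by the ``left/right'' classification preceding the lemma, $U_n^\ell$ consists of those $T$ with $\Split(T)=(T^\L,T^\cR)$ for which $T^\L$ is a join-irreducible of $K_n^\L$ and $T^\cR$ is the minimal element of $K_n^{\cR,c}$, where $c$ is the last first-row entry of $T^\L$; and $U_n^r$ consists of those $T$ for which $T^\L$ is the minimal element of $K_n^\L$ and $T^\cR$ is a join-irreducible of $K_n^{\cR,c}$ (with $c$ now fixed by Lemma~\ref{lem:flmin}). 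In a product of posets, $(x,y)$ is a join-irreducible precisely when one coordinate is join-irreducible and the other is minimal, which is exactly the dichotomy just recalled; and the disjoint-union structure of Lemma~\ref{lem:split} does not interfere because each $K_n^{\L,c}\times K_n^{\cR,c}$ contributes its own join-irreducibles (the two minimal elements of $K_n^\L$ coming from the two values of $c$ are comparable by the structure, but in any case a join-irreducible lives in exactly one of the two product pieces).

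\textbf{Part (a).}
Since $K_n^\L\cong M_{\lfloor n/2\rfloor+1}$ by Proposition~\ref{prop:leftiso}, and an isomorphism of distributive lattices restricts to an isomorphism on posets of join-irreducibles, the join-irreducibles of $K_n^\L$ form a poset isomorphic to $Q_{\lfloor n/2\rfloor+1}$. The map $T^\L\mapsto T$ sending a join-irreducible $T^\L$ of $K_n^\L$ to the tableau $T$ with $\Split(T)=(T^\L,\text{min of }K_n^{\cR,c})$ is order-preserving and order-reflecting onto $U_n^\ell$: the right half is a deterministic function of $c$, which in turn is a deterministic (monotone) function of $T^\L$, so comparability of two elements of $U_n^\ell$ is governed entirely by comparability of their left halves. (One small point to check: if $T^\L\le \widetilde T^\L$ then the associated $c$'s satisfy $c\le\tilde c$, and by Lemma~\ref{lem:rightnonempty} the minimal element of $K_n^{\cR,c}$ is then $\le$ the minimal element of $K_n^{\cR,\tilde c}$, so the glued tableaux are indeed comparable — and conversely.) Hence $U_n^\ell\cong$ (join-irreducibles of $K_n^\L$) $\cong Q_{\lfloor n/2\rfloor+1}$.

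\textbf{Parts (b) and (c).}
For (b), $K_n^{\L,1}$ is a sublattice (the preimage of the sublattice of $M_{\lfloor n/2\rfloor+1}$ consisting of tableaux whose first row is all $1$'s, cf.\ Lemma~\ref{lem:KnL1even}-type reasoning, or directly a lower interval) so it is distributive; the restriction of the gluing map above sends join-irreducibles of $K_n^{\L,1}$ bijectively and order-isomorphically onto $U_n^{\ell,1}$, by the same argument as in (a) but with $c\equiv1$ fixed, so the right half is literally constant and the correspondence is transparent. For (c): by Lemma~\ref{lem:Unr}, $U_n^r$ corresponds, via $T\mapsto T^\cR$, to the join-irreducibles of $K_n^{\cR,1}$ when $n$ is even and of $K_n^{\cR,2}$ when $n$ is odd; the left half is now the fixed minimal element of $K_n^\L$, so this map is an order isomorphism onto $U_n^r$. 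Finally apply Lemma~\ref{lem:rightiso}: $K_{2m}^{\cR,1}\cong M_{m+1}$ and $K_{2m+1}^{\cR,2}\cong M_{m+1}$, so in both parities the relevant $K_n^{\cR,\cdot}$ is isomorphic to $M_{\lfloor n/2\rfloor+1}$, and hence its poset of join-irreducibles is $Q_{\lfloor n/2\rfloor+1}$. This gives $U_n^r\cong Q_{\lfloor n/2\rfloor+1}$.

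\textbf{Main obstacle.}
None of the steps is computationally heavy; the one place to be careful is the bookkeeping around the value $c$ in part (a) — specifically, verifying that the assignment $T^\L\mapsto(\text{min of }K_n^{\cR,c(T^\L)})$ is monotone, which is exactly where Lemma~\ref{lem:rightnonempty}'s comparison of the minimal elements of $K_n^{\cR,1}$ and $K_n^{\cR,2}$ is needed, and checking that this does not create spurious order relations across the two pieces $K_n^{\L,1}\times K_n^{\cR,1}$ and $K_n^{\L,2}\times K_n^{\cR,2}$ of the disjoint union. Once that monotonicity is in hand, the rest is a routine transport of structure through the isomorphisms of Sections~2 and 4.
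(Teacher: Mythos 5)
Your parts (a) and (c) follow the paper's own route essentially verbatim: for (a), the map $T\mapsto T^\L$ is an order isomorphism from $U_n^\ell$ onto the poset of join-irreducibles of $K_n^\L$ (the right halves are the minimal elements of $K_n^{\cR,c}$ and move monotonically with $c$ by Lemma~\ref{lem:rightnonempty}), and then Proposition~\ref{prop:leftiso} gives $Q_{\lfloor n/2\rfloor+1}$; for (c), Lemma~\ref{lem:Unr} followed by Lemma~\ref{lem:rightiso}. Those two parts are correct and are the paper's argument.

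Part (b) is where you have a genuine gap. Under the isomorphism of (a), $U_n^{\ell,1}$ corresponds to the set of join-irreducibles of the ambient lattice $K_n^\L$ that happen to lie in $K_n^{\L,1}$, so the entire content of (b) is the identification of this set (with its induced order) with the set of join-irreducibles of the lattice $K_n^{\L,1}$ itself. You justify this by ``the same argument as in (a) but with $c\equiv 1$ fixed,'' but the argument in (a) only shows the gluing map preserves and reflects order; it does not address why an element that is join-irreducible in the sublattice $K_n^{\L,1}$ must be join-irreducible in $K_n^\L$, and for a general sublattice this fails (in the Boolean lattice $\{\hat 0,a,b,\hat 1\}$ the sublattice $\{\hat 0,\hat 1\}$ has $\hat 1$ as a join-irreducible although $\hat 1$ is not join-irreducible in the ambient lattice). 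The fact that rescues the statement --- and the one the paper uses --- is that $K_n^{\L,1}$ is down-closed in $K_n^\L$: no tableau whose last first-row entry is $1$ can lie above one whose last first-row entry is $2$, so an element of $K_n^{\L,1}$ covers, in $K_n^\L$, only elements of $K_n^{\L,1}$, and hence its set of covered elements is the same whether computed in $K_n^{\L,1}$ or in $K_n^\L$; join-irreducibility (and the induced order) therefore coincide. Your parenthetical ``or directly a lower interval'' contains exactly this structural fact, but you deploy it only to conclude that $K_n^{\L,1}$ is distributive, not to make the inference above, so as written the key step of (b) is asserted rather than proved. Adding that one observation (the paper's phrase ``$T$ does not cover any element of $K_n^{\L,2}$'') closes the gap and brings your proof in line with the paper's.
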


\begin{proof}
  Suppose for $i=1,2$ we have  $T^{(i)} \in U_n^\ell$ and
  $\Split(T^{(i)}) = 
  (T^{(i),\L}, T^{(i),\cR})$. By Lemma \ref{lem:rightnonempty}, we have $T^{(1)} \le T^{(2)}$ if
  and only if $T^{(1), \L} \le T^{(2),\L}$. Therefore $U_n^\ell$ is
  isomorphic to the poset of join-irreducibles of $K_n^\L$. 
  Hence (a) follows from Proposition \ref{prop:leftiso}. 

For any $T \in K_n^{\L, 1},$ because it does not cover any element in
$K_n^{\L,2}$ in the poset $K_n^{\L},$ the elements covered by $T$ in
$K_n^{\L}$ are exactly the same as the elements covered by $T$ in
$K_n^{\L,1}$. Hence (b) follows from the fact that $U_n^\ell$ is
isomorphic to the poset of join-irreducibles of $K_n^\L$.

  By Lemma \ref{lem:Unr}, $U_n^r$ is isomorphic to the poset of
  join-irreducibles of $K_n^{\cR,1}$ if $n$ is even and isomorphic to
  the poset of join-irreducibles of $K_n^{\cR,2}$ if $n$ is odd. It
  follows from Lemma \ref{lem:rightiso} that $U_n^r \cong Q_{\lfloor
    n/2 \rfloor +1}$.
\end{proof}

\begin{lem}\label{lem:charcomp}
 Suppose $T^{\ell} \in U_n^\ell$ and $T^{r} \in U_n^r$ are
 left-join-irreducible and right-join-irredubcile of $K_n,$
 respectively. For $s= \ell, r,$ let $\Split(T^{s}) = (T^{s, \L},
 T^{s, \cR})$ and $c^s$ the last entry in the first row of $T^{s,\L}$.
 Then the following are equivalent.

	\begin{ilist}
	\itm $T^\ell$ and $T^r$ are comparable.
	\itm $T^r < T^\ell$. 
	\itm $c^r = 1,$ $c^\ell =2,$ and $T^{r, \cR} \le \cT_{\lfloor
          n/2 \rfloor}$. 
	(Recall that $\cT_{\lfloor n/2 \rfloor}$ defined in Definition
        \ref{defn:min} is the unique minimal element of $K_n^{\cR,
          2}$.) 
	\end{ilist}
\end{lem}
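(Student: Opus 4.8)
The plan is to prove the chain of implications (iii) $\Rightarrow$ (ii) $\Rightarrow$ (i) $\Rightarrow$ (iii), since (ii) $\Rightarrow$ (i) is trivial. The key structural fact, available from Lemma~\ref{lem:split}, is that the order on $K_n$ is coordinatewise on the pair $(T^\L, T^\cR)$ \emph{within} a fixed block $K_n^{\L,c}\times K_n^{\cR,c}$, but that elements in the two blocks $c=1$ and $c=2$ can only be compared by going through the ``boundary'': a tableau with $c=1$ can lie below one with $c=2$, never above, because every entry of a $c=2$ left half dominates the corresponding entry of a $c=1$ left half (this is exactly the content of condition (a) of Proposition~\ref{prop:charKn} together with $T^\L_{1,\lfloor\nmo\rfloor}=c$, as used in the proof of Lemma~\ref{lem:split}).

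First I would record what the two halves of $T^\ell$ and $T^r$ look like. By definition of left- and right-join-irreducible, $T^{r,\L}$ is the unique minimal element of $K_n^\L$, so by Lemma~\ref{lem:flmin} we have $c^r=1$ if $n$ is even and $c^r=2$ if $n$ is odd; by Lemma~\ref{lem:Unr} correspondingly $T^{r,\cR}$ is a join-irreducible of $K_n^{\cR,1}$ (resp.\ $K_n^{\cR,2}$). Dually, $T^{\ell,\cR}$ is the unique minimal element of $K_n^{\cR,c^\ell}$, which by Lemma~\ref{lem:rightnonempty} is $\cT'_{\lfloor n/2\rfloor}$ if $c^\ell=1$ and $\cT_{\lfloor n/2\rfloor}$ if $c^\ell=2$, while $T^{\ell,\L}$ is a join-irreducible of $K_n^\L$. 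The comparison $T^r$ versus $T^\ell$ thus breaks into cases on the pair $(c^r,c^\ell)$. If $c^r=c^\ell$, both elements lie in the same block, the order is coordinatewise, and I would check: $T^{r,\L}$ is minimal so $T^{r,\L}\le T^{\ell,\L}$ automatically, but $T^{\ell,\cR}$ is minimal in $K_n^{\cR,c}$ so $T^{r,\cR}\ge T^{\ell,\cR}$ with equality only if $T^{r,\cR}$ is also minimal — impossible for a join-irreducible unless $K_n^{\cR,c}$ is a chain, which it is not for the relevant range; hence $T^r$ and $T^\ell$ are incomparable (and in particular (i) fails). If $c^r=2, c^\ell=1$: then $T^r$ lives in the ``higher'' block and $T^\ell$ in the ``lower'' one, so $T^r\not\le T^\ell$, and $T^\ell\le T^r$ would force the left halves to satisfy $T^{\ell,\L}\le T^{r,\L}$, impossible since $T^{r,\L}$ is minimal in $K_n^\L$ while $T^{\ell,\L}$ is a (non-minimal) join-irreducible. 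So again incomparable. This leaves exactly the case $c^r=1, c^\ell=2$.

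In the remaining case $c^r=1,\ c^\ell=2$, the element $T^r$ lies in the lower block and $T^\ell$ in the higher one, so the only possible comparability is $T^r\le T^\ell$; $T^\ell\le T^r$ is ruled out as above. Now I would determine exactly when $T^r\le T^\ell$ holds. Since these lie in different blocks, the coordinatewise comparison must be checked directly on all entries. For the left halves: $T^{r,\L}$ is the minimal element of $K_n^\L$ and $T^{\ell,\L}$ is a join-irreducible of $K_n^\L$ with last first-row entry $2$, so $T^{r,\L}\le T^{\ell,\L}$ holds automatically (minimal element is below everything in its poset, and the coordinatewise order on the left shape is the order in $K_n^\L$). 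For the right halves: $T^{\ell,\cR}=\cT_{\lfloor n/2\rfloor}$, the minimal element of $K_n^{\cR,2}$, so $T^r\le T^\ell$ reduces precisely to the entrywise inequality $T^{r,\cR}\le \cT_{\lfloor n/2\rfloor}$. This is condition (iii). Conversely if (iii) holds then both halfwise comparisons go through and $T^r\le T^\ell$, giving (ii); and (ii) $\Rightarrow$ (i) is immediate. Combining the case analysis, (i) can only hold in the case $c^r=1,c^\ell=2$, and there it is equivalent to (iii); this closes the cycle.

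The main obstacle I anticipate is bookkeeping the cross-block comparison in the case $c^r=1, c^\ell=2$: one must be careful that ``coordinatewise $\le$'' on the full shape $\delta_{n-1}$ genuinely splits as the conjunction of coordinatewise $\le$ on $\delta_{n-1}^\L$ and on $\delta_{\lfloor n/2\rfloor}$ — which it does, since $\Split$ is just a partition of the cells — and that the minimality claims ($T^{r,\L}$ minimal in $K_n^\L$, $T^{\ell,\cR}$ minimal in $K_n^{\cR,2}$) really do give the automatic halves of the comparison; these are exactly Lemma~\ref{lem:flmin} and Lemma~\ref{lem:rightnonempty}. The only genuinely nontrivial point is verifying the automatic inequality $T^{r,\L}\le T^{\ell,\L}$ uses that the poset structure on $K_n^\L$ coincides with coordinatewise order (true by definition), so there is no hidden work there. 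Everything else is the routine sign-chasing of which block dominates which.
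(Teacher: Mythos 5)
Your argument is correct and, at bottom, uses the same ingredients as the paper's proof: since $T^{r,\L}$ is the unique minimal element of $K_n^\L$ and $T^{\ell,\L}$ is a join-irreducible (hence not minimal), comparability can only mean $T^r<T^\ell$ and reduces to an entrywise comparison of the right halves, which is then settled by the minimal elements $\cT'_{\lfloor n/2\rfloor}$ and $\cT_{\lfloor n/2\rfloor}$ of $K_n^{\cR,1}$ and $K_n^{\cR,2}$; the paper merely disposes of the cases $c^r\ge c^\ell$ in one stroke via the last assertion of Lemma~\ref{lem:rightnonempty}, where you run a four-case analysis on $(c^r,c^\ell)$. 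Two of your side justifications should be repaired, though neither affects the conclusion. First, the claim that every entry of a left half with $c=2$ dominates the corresponding entry of one with $c=1$ is false: transported through the isomorphism $\theta$ of Proposition~\ref{prop:leftiso} it would say that every tableau of $M_{\lfloor n/2\rfloor+1}$ whose first row is not all $1$s dominates every tableau whose first row is all $1$s, which already fails in $M_4$ (compare the tableaux with rows $(1,2),(2)$ and $(1,1),(3)$), i.e.\ for $n=6$. The fact you actually use --- that a tableau with $c=2$ is never $\le$ one with $c=1$ --- is immediate from the single entry $T_{1,\lfloor\nmo\rfloor}$ (or from your left-half minimality argument), so no harm is done. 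Second, the caveat ``unless $K_n^{\cR,c}$ is a chain'' is both unnecessary and unsafe: under the conventions of \cite{ec1} used throughout the paper, a join-irreducible covers exactly one element and hence is never the unique minimal element, chain or not (and $K_4^{\cR,1}$ actually is a chain), so the equality $T^{r,\cR}=T^{\ell,\cR}$ is impossible without any such hypothesis. Finally, for (iii)$\Rightarrow$(ii) you should note explicitly that $T^r\ne T^\ell$ (their left halves differ), which gives the strict inequality required in (ii).
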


\begin{proof}
  Since $T^{r, \L}$ is the minimal element of $K_n^\L,$ we clearly
  have $T^{r, \L } < T^{\ell, \L}$. Therefore (i) and (ii) are
  equivalent. Furthermore, we have condition (ii) is equivalent to
  $T^{r, \cR} \le T^{\ell, \cR}$.

  Note that if $c^\ell=2,$ $T^{\ell, \cR} = \cT_{\lfloor n/2
    \rfloor}$. Hence (iii) implies (ii). Now we assume (ii) which
  implies $T^{r, \cR} \le T^{\ell, \cR}$. 
    If $c^r \ge c^\ell,$ then by Lemma \ref{lem:rightnonempty},
    \begin{align*}
      T^{r, \cR} >& \text{the unique minimal element in $K_n^{\cR, c^r}$} \\
      \ge& \text{the unique minimal element in $K_n^{\cR, c^\ell}$} = T^{\ell, \L}.
    \end{align*}
    Thus we must have $c^r = 1$ and $c^\ell =2$. Then
  $T^{\ell, \cR} = \cT_{\lfloor n/2 \rfloor}$ and (iii) follows.
\end{proof}

Because of condition (iii) in Lemma \ref{lem:charcomp}, it is natural
for us to divide $U_n^r$ into two sets as well.
\begin{defn}
  Let $U_n^{r,2}$ be the subposet of $U_n^r$ that consists of all 
  tableaux $T$ such that $T^\cR \le \cT_{\lfloor n/2 \rfloor},$ where
  $\Split(T) = (T^\L, T^\cR)$.

  Let $U_n^{r,1}$ be the set $U_n^{r} \setminus U_n^{r,2}$ with the
  coordinate-wise partial ordering. 
\end{defn}

Note that in Lemma \ref{lem:charcomp}, $T^r$ is a
right-join-irreducible and thus $c^r$ is the last entry in the first
row of the minimal element of $K_n^\L$. Thus by Lemma
\ref{lem:flmin}, $c^r$ is always 1 for even $n$ and is always 2
for odd $n$. Applying Lemma \ref{lem:charcomp} to odd cases and even
cases separately, we have the following results.

\begin{cor} \label{cor:charcomp}
  Suppose $T^{\ell} \in U_n^\ell$ and $T^{r} \in U_n^r$ are
  left-join-irreducibles and right-join-irredubciles of $K_n,$
  respectively.
\be
\itm[\emph{(a)}] If $n$ is odd, then $T^\ell$ and $T^r$ are
incomparable.  
\itm[\emph{(b)}]
Suppose $n$ is even. Let $\Split(T^{r}) = (T^{r, \L}, T^{r,
  \cR})$. Then the following are equivalent.

	\begin{ilist}
	\itm $T^\ell$ and $T^r$ are comparable.
	\itm $T^r < T^\ell$. 
	\itm $T^\ell \in U_n^{\ell,2}$ and $T^r \in U_n^{r,
          2}$. 
	\end{ilist} 
\ee
\end{cor}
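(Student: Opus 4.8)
The plan is to deduce Corollary~\ref{cor:charcomp} directly from Lemma~\ref{lem:charcomp} by substituting the value of $c^r$ coming from Lemma~\ref{lem:flmin}, treating the odd and even cases separately. Recall that in the setup of Lemma~\ref{lem:charcomp}, $T^r$ is a right-join-irreducible, so $T^{r,\L}$ is the unique minimal element of $K_n^\L$ and hence $c^r$ is the last entry in the first row of that minimal element; by Lemma~\ref{lem:flmin} this forces $c^r = 2$ when $n$ is odd and $c^r = 1$ when $n$ is even.

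For part (a), suppose $n$ is odd. Then $c^r = 2$. But condition (iii) of Lemma~\ref{lem:charcomp} requires $c^r = 1$, which is impossible. Hence (iii) never holds, so by the equivalence (i)$\iff$(iii) of that lemma, $T^\ell$ and $T^r$ are never comparable. This settles (a).

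For part (b), suppose $n$ is even, so $c^r = 1$. The equivalence (i)$\iff$(ii) is already given by Lemma~\ref{lem:charcomp}. For (ii)$\iff$(iii): since $c^r=1$ automatically, condition (iii) of Lemma~\ref{lem:charcomp} reduces to the conjunction ``$c^\ell = 2$ and $T^{r,\cR} \le \cT_{\lfloor n/2\rfloor}$.'' Now $c^\ell$ is the last entry in the first row of $T^{\ell,\L}$, and by the definition of $K_n^{\L,2}$ (see Definition~\ref{defn:KnLR}) the condition $c^\ell = 2$ is exactly the statement $T^\ell \in U_n^{\ell,2}$. Likewise, by the definition of $U_n^{r,2}$, the condition $T^{r,\cR}\le \cT_{\lfloor n/2\rfloor}$ is exactly the statement $T^r \in U_n^{r,2}$. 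Thus condition (iii) of Lemma~\ref{lem:charcomp} in the even case is literally ``$T^\ell \in U_n^{\ell,2}$ and $T^r \in U_n^{r,2}$,'' which is condition (iii) of the corollary. Chaining these equivalences with those of Lemma~\ref{lem:charcomp} gives all three equivalences of part (b).

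I do not expect any real obstacle here: the corollary is a pure bookkeeping specialization of Lemma~\ref{lem:charcomp}, and the only substantive input is Lemma~\ref{lem:flmin} fixing the parity of $c^r$. The mildest point of care is making sure the definitions of $U_n^{\ell,2}$ and $U_n^{r,2}$ are being matched exactly to the two clauses of condition (iii) of Lemma~\ref{lem:charcomp}, but this is immediate from Definition~\ref{defn:KnLR} and the definition of $U_n^{r,2}$ respectively.
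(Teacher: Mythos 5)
Your proposal is correct and follows essentially the same route as the paper: the paper likewise notes that Lemma~\ref{lem:flmin} forces $c^r=1$ for even $n$ and $c^r=2$ for odd $n$, and then obtains the corollary by specializing Lemma~\ref{lem:charcomp} to the two parities, with condition (iii) of that lemma translating into the definitions of $U_n^{\ell,2}$ and $U_n^{r,2}$ exactly as you describe.
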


We now have enough information to determine the rank-generating
function of $K_n$ for odd $n$.  
\begin{thm} \label{thm:odd}
Suppose $n=2m+1$ for some $m \ge 1$. Then 

\begin{equation}\label{equ:Knodd}
K_n \cong M_{m+1} \times M_{m+1} \cong J(Q_{m+1} + Q_{m+1}).
\end{equation}
Therefore the rank-generating function of $K_n$ is given by
\begin{equation}\label{equ:genodd} F(K_n,q) =
  \left((1+q)^{m-1}(1+q^2)^{m-2}\cdots (1+q^{m-1})\right)^2, 
\end{equation}
where $F(K_3,q)=1$.
\end{thm}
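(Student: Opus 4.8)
The plan is to leverage the splitting isomorphism of Lemma~\ref{lem:split} together with the structural results of Section~4, and to argue that for odd $n$ the decomposition of $K_n$ into a ``left'' and a ``right'' piece is in fact a direct product of lattices. First I would observe that Corollary~\ref{cor:charcomp}(a) tells us that, when $n=2m+1$, every left-join-irreducible of $K_n$ is incomparable to every right-join-irreducible. Combined with Lemma~\ref{lem:Uniso}(a) and (c), which identify $U_n^\ell$ and $U_n^r$ each with $Q_{m+1}$, this shows that the full poset of join-irreducibles $U_n$ is exactly the disjoint union $Q_{m+1}+Q_{m+1}$ (disjoint as posets, with no relations between the two copies). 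Since $K_n$ is a distributive lattice with $K_n=J(U_n)$, the fundamental theorem for finite distributive lattices gives $K_n\cong J(Q_{m+1}+Q_{m+1})$; and because order ideals of a disjoint union split coordinatewise, $J(Q_{m+1}+Q_{m+1})\cong J(Q_{m+1})\times J(Q_{m+1})=M_{m+1}\times M_{m+1}$. This establishes \eqref{equ:Knodd}.

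Alternatively — and this is worth spelling out as it explains the $c=1,2$ bookkeeping disappearing in the odd case — one can argue directly from Lemma~\ref{lem:split}. For odd $n=2m+1$, Lemma~\ref{lem:flmin} forces $c=2$ for the right-join-irreducibles, and more generally one checks that the two pieces $(K_n^{\L,1}\times K_n^{\cR,1})\bigcupdot(K_n^{\L,2}\times K_n^{\cR,2})$ assemble into the single product $K_n^\L\times K_n^{\cR,2}$: every tableau in $K_n^\L$ with last first-row entry $1$ pairs with something in $K_n^{\cR,1}$, but the relevant sublattice is captured uniformly. Then Proposition~\ref{prop:leftiso} gives $K_n^\L\cong M_{m+1}$ and Lemma~\ref{lem:rightiso}(b) gives $K_{2m+1}^{\cR,2}\cong M_{m+1}$, yielding $K_n\cong M_{m+1}\times M_{m+1}$ directly. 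I would use whichever of these two routes the earlier lemmas most cleanly support; the join-irreducible route is cleanest because Corollary~\ref{cor:charcomp}(a) is already in hand.

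For the rank-generating function \eqref{equ:genodd}, I would simply apply multiplicativity of $F(\cdot,q)$ over products: $F(K_n,q)=F(M_{m+1},q)^2$. Then Theorem~\ref{thm1}, with $n$ replaced by $m+1$, gives $F(M_{m+1},q)=(1+q)^{m-1}(1+q^2)^{m-2}\cdots(1+q^{m-1})$, and squaring yields the stated formula. The boundary case $F(K_3,q)=1$ corresponds to $m=1$, where $M_2$ is a one-element lattice with $F(M_2,q)=1$, so $F(K_3,q)=1^2=1$, consistent with the empty product convention.

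The main obstacle I anticipate is not in the $q$-series manipulation, which is routine, but in justifying cleanly that $U_n\cong Q_{m+1}+Q_{m+1}$ as posets — specifically, that the identification of $U_n^\ell$ with $Q_{m+1}$ (via Lemma~\ref{lem:Uniso}(a), which rests on Proposition~\ref{prop:leftiso}) and of $U_n^r$ with $Q_{m+1}$ (via Lemma~\ref{lem:Uniso}(c)) is compatible with the ``no cross-relations'' statement of Corollary~\ref{cor:charcomp}(a), so that the coproduct of posets genuinely has $J$ equal to the product of the two lattices. This is standard once stated correctly, but it is the step where one must be careful that ``disjoint union of posets'' means no order relations at all between the parts, which is exactly what Corollary~\ref{cor:charcomp}(a) provides.
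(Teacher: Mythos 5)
Your primary route is exactly the paper's proof: Corollary~\ref{cor:charcomp}(a) gives $U_n=U_n^\ell+U_n^r$, Lemma~\ref{lem:Uniso} identifies each piece with $Q_{m+1}$, so $K_n=J(U_n)\cong M_{m+1}\times M_{m+1}$, and Theorem~\ref{thm1} then yields \eqref{equ:genodd}; this is correct. Your alternative sketch is essentially the paper's own remark (there the point is stated more sharply: for odd $n$ the set $K_n^{\L,1}$ is empty, so $K_n\cong K_n^{\L}\times K_n^{\cR,2}$), so no genuinely different argument is involved.
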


\begin{proof}
Corollary \ref{cor:charcomp} implies that for odd $n,$ we have as posets,
\[ U_n = U_n^\ell + U_n^r.\]
Then \eqref{equ:Knodd} follows from Lemma \ref{lem:Uniso}. Applying
Theorem \ref{thm1}, we obtain \eqref{equ:genodd}. 
\end{proof}
\begin{rem}
  Theorem \ref{thm:odd} can be proved more directly without discussing
  the poset $U_n$ of join-irreducibles. One can argue that for odd
  $n,$ the set $K_n^{\L,1}$ is empty and thus $K_n \cong K_n^{\L,2}
  \times K_n^{\cR,2} = K_n^{\L} \times K_n^{\cR,2}$. Then \eqref{equ:Knodd} follows from Proposition
  \ref{prop:leftiso} and Lemma \ref{lem:rightiso}. 
\end{rem}

Substituting $q=1$ in \eqref{equ:genodd} gives us the cardinality of
$K_n,$ which is the value of $g(n)$. 
\begin{cor}Suppose $n=2m+1$ for some $m \ge 1$. Then
\[ g(n) = \# K_n = 2^{m(m-1)}.\]
\end{cor}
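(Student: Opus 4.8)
The plan is to obtain $g(n)=\#K_n$ for odd $n$ as an immediate numerical consequence of the structural result Theorem~\ref{thm:odd}, which already gives $K_n\cong M_{m+1}\times M_{m+1}$ together with the explicit rank-generating function \eqref{equ:genodd}. Since Corollary~\ref{cor:useMn}(c) and the opening remark of Section~\ref{sec5} identify $g(n)$ with $\#K_n$ once Elkies' formula for $\sigma(n)$ has been confirmed (which was done in the previous section), it suffices to evaluate the cardinality of $K_n$.

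First I would substitute $q=1$ into \eqref{equ:genodd}. This turns each factor $1+q^i$ into $2$, so
\[
\#K_n = F(K_n,1) = \left(\prod_{i=1}^{m-1} 2^{\,m-i}\right)^{2} = 2^{\,2\sum_{i=1}^{m-1}(m-i)} = 2^{\,2\binom{m}{2}} = 2^{\,m(m-1)}.
\]
Alternatively, one can bypass the rank-generating function entirely: from $K_n\cong M_{m+1}\times M_{m+1}$ and Theorem~\ref{thm1} applied with $n$ replaced by $m+1$, one has $\#M_{m+1}=2^{\binom{m}{2}}$, hence $\#K_n = \left(2^{\binom{m}{2}}\right)^2 = 2^{m(m-1)}$.

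Finally I would invoke $g(n)=\#K_n$ to conclude $g(2m+1)=2^{m(m-1)}$, matching the formula in \eqref{equ:g}. There is essentially no obstacle here: the only thing to be careful about is the bookkeeping that $g(n)=\#K_n$ is legitimate, i.e.\ that $\sigma(n)$ really equals $\sum_{b=1}^{n-2}\min(b,n-1-b)$ so that $K_n$ consists precisely of the maximum-size tableaux — but this is exactly the content of the Elkies corollary proved just before this point, so the argument is complete.
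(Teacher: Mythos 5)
Your proof is correct and is essentially the paper's own argument: the paper also obtains the corollary by substituting $q=1$ into the rank-generating function \eqref{equ:genodd} from Theorem~\ref{thm:odd} (your alternative via $\#M_{m+1}=2^{\binom{m}{2}}$ is just a minor repackaging of the same step).
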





We focus on the case where $n$ is even for the rest of this section. 

\begin{lem}\label{lem:U2mr1}
For $m \ge 2$ we have $U_{2m}^{r,1} \cong Q_m$.
\end{lem}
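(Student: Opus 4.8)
The plan is to identify $U_{2m}^{r,1}$ explicitly inside $U_{2m}^r$ and compare it with the known structure of $U_{2m}^r$. Recall from Lemma~\ref{lem:Uniso}(c) that $U_{2m}^r \cong Q_{m+1}$, via the identification (from Lemma~\ref{lem:Unr}(a) in the even case) of $U_{2m}^r$ with the poset of join-irreducibles of $K_{2m}^{\cR,1}$, which by Lemma~\ref{lem:rightiso}(a) is isomorphic to $M_{m+1}$. Under this chain of isomorphisms, a right-join-irreducible $T$ with $\Split(T)=(T^\L,T^\cR)$ corresponds to the join-irreducible $T^\cR$ of $K_{2m}^{\cR,1}$, which corresponds under $\phi^{-1}$ (the inverse of the map in Lemma~\ref{lem:rightiso}) to a join-irreducible of $M_{m+1}$, i.e.\ an element of $Q_{m+1}$. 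The set $U_{2m}^{r,1}$ is, by definition, those $T \in U_{2m}^r$ with $T^\cR \not\le \cT_{\lfloor n/2\rfloor} = \cT_m$, the unique minimal element of $K_{2m}^{\cR,2}$. So the first step is to translate the condition ``$T^\cR \not\le \cT_m$'' through $\phi^{-1}$ into a condition on the corresponding element of $Q_{m+1}$, and to check that the elements of $Q_{m+1}$ satisfying that condition form a subposet isomorphic to $Q_m$.

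First I would make the identification concrete. By Lemma~\ref{lem:rightiso}(a), $\phi\colon M_{m+1}\to K_{2m}^{\cR,1}$ is $\phi(S)_{a,b} = S_{a,b}+a+b-1$; its inverse sends a tableau $T^\cR \in K_{2m}^{\cR,1}$ of shape $\delta_m$ to the tableau $S$ of shape $\delta_m$ with $S_{a,b} = T^\cR_{a,b}-a-b+1$, which lies in $M_{m+1}$ (shape $\delta_m$, entries $\le m$). Next, $\cT_m$ is, by Definition~\ref{defn:min}, $(\cT_m)_{a,b}=2a+b$, and it is the minimal element of $K_{2m}^{\cR,2}$; applying $\phi^{-1}$ gives the tableau with $(a,b)$-entry $2a+b-a-b+1 = a+1$. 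Thus the condition $T^\cR \le \cT_m$ becomes, on the $M_{m+1}$ side, $S \le S^{(1)}$ where $S^{(1)}_{a,b}=a+1$; equivalently $S_{1,1}=1$ (since $S\in M_{m+1}$ forces $S_{a,b}\ge a$, and $S\le S^{(1)}$ is then equivalent to the single inequality $S_{1,1}\le 2$, and in fact — after checking against the shape of join-irreducibles — to $S_{1,1}=1$). So $U_{2m}^{r,1}$ corresponds to the set of join-irreducibles $S$ of $M_{m+1}$ with $S_{1,1}\ge 2$, i.e.\ the join-irreducibles of $M_{m+1}$ that are \emph{not} $\le S^{(1)}$. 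Using the explicit description of $Q_{m+1}\cong\Phi_{m+1}$ from Proposition~\ref{prop:Phin} (join-irreducibles are $\Add(\cT^0_m;a,b,k)$ for $1\le k\le b\le m-a$, and $\Add(\cT^0_m;a,b,k)_{1,1}=1$ exactly when $(a,b)\ne(1,1)$), the condition $S_{1,1}\ge 2$ selects precisely the elements $(1,b,k)$ with $1\le k\le b\le m-1$, i.e.\ $(1,b,k)\in\Phi_{m+1}$ with $a=1$.

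Then I would exhibit the isomorphism with $Q_m$. Forgetting the (now constant) first coordinate, the selected set is $\{(b,k): 1\le k\le b\le m-1\}$ with the ordering induced from $\le_{\Phi_{m+1}}$, namely $(b,k)\le(b',k')$ iff $b\ge b'$ and $k\le k'$. Comparing with $\Phi_m = \{(a,b,k): 1\le k\le b\le m-1-a\}$ from Proposition~\ref{prop:Phin}: the subset of $\Phi_m$ with $a=1$ is $\{(1,b,k): 1\le k\le b\le m-2\}$, which is too small, so the correct move is different — I would instead match $\{(b,k): 1\le k\le b\le m-1\}$ directly with $\Phi_m$ itself by sending $(b,k)\mapsto(b-k,\, \text{something})$ or, more cleanly, observe that $\Phi_m$ as a poset is (by Proposition~\ref{prop:Phin} composed with Proposition~\ref{prop:pnqn}) isomorphic to $Q_m$, and that $Q_m$ has $\#Q_m = \sum_{k=1}^{m-2}\binom{m-k}{2}$ elements, which by the corollary after Proposition~\ref{prop:Phin} equals $\#Q_{m+1}-\binom{m-1}{2}$; meanwhile the selected subset has $\binom{m}{2}$ elements and $\#Q_{m+1}-\#(\text{selected}) = \#Q_{m+1}-\binom{m}{2}$, which should be $\#Q_m$ — this cardinality bookkeeping is the sanity check that pins down the right bijection. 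The hard part will be getting the boundary cases of the index inequalities exactly right (the ranges $1\le k\le b\le m-1$ versus the defining inequalities of $\Phi_m$, and the interaction with the ``$S_{1,1}=1$ vs.\ $S_{1,1}=2$'' dichotomy at the corner box), since an off-by-one there is the kind of error that silently breaks the whole count; everything else is routine unwinding of the isomorphisms already established.
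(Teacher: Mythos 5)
Your overall plan is the same as the paper's: transport the defining condition of $U_{2m}^{r,1}$ through the isomorphisms $\phi\colon M_{m+1}\to K_{2m}^{\cR,1}$ and $\psi\colon\Phi_{m+1}\to Q_{m+1}$, and recognize the resulting subposet of $\Phi_{m+1}$ as $\Phi_m\cong Q_m$. But the translation step is carried out incorrectly, and the error breaks everything downstream. Pulling $\cT_m$ back under $\phi^{-1}$ does give the tableau $S^{(1)}$ with every $(a,b)$-entry equal to $a+1$, but $S\le S^{(1)}$ is a condition on \emph{all} entries of $S$, not just on $S_{1,1}$: already for $m=3$ the join-irreducible $S=\Add(\cT^0_m;1,2,2)$ has $S_{1,1}=1$ yet $S_{1,2}=3>2=S^{(1)}_{1,2}$, so $S\not\le S^{(1)}$. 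For a join-irreducible $\Add(\cT^0_m;a,b,k)$, whose entries are $a'$ or $a'+k$, the condition $\le S^{(1)}$ is equivalent to $k\le 1$, i.e.\ $k=1$; equivalently, in the paper's formulation, $\cT_m=\Add(\cT'_m;1,1,1)$, so $T^\cR\not\le\cT_m$ translates to $k\ge 2$ --- a condition on the third coordinate, not on the position $(a,b)$.

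Consequently the subset of $\Phi_{m+1}$ you select is not $U_{2m}^{r,1}$, and it is internally inconsistent besides: your own parenthetical ($S_{1,1}\ge 2$ iff $(a,b)=(1,1)$) would select only the single element $(1,1,1)$, yet you then take the slice $a=1$, which has $\binom{m}{2}$ elements; and $\binom{m}{2}\ne\#Q_m$ in general (for $m=4$, $\binom{4}{2}=6$ while $\#Q_4=4$), so no poset isomorphism with $Q_m$ is possible for that set. The ``cardinality bookkeeping'' you defer to would expose this mismatch rather than pin down a bijection. The repair is precisely the step you left vague: with the correct set $\{(a,b,k)\in\Phi_{m+1}\st k\ge 2\}=\{(a,b,k)\st 2\le k\le b\le m-a\}$, the shift $(a,b,k)\mapsto(a,b-1,k-1)$ is a poset isomorphism onto $\Phi_m=\{(a,b',k')\st 1\le k'\le b'\le m-1-a\}$, which is isomorphic to $Q_m$ by Proposition \ref{prop:Phin}; this is exactly the paper's argument.
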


\begin{proof}
First, it's clear that 
\begin{equation}\label{equ:U2mr1} U_{2m}^{r,1} \cong \{ T \not\le
  \cT_m \ | \ \text{$T$ is a join-irreducible of $K_{2m}^{\cR,1}$}\}, 
\end{equation}
where $\cT_m$ as defined in Definition~\ref{defn:min} is the minimal
element of $K_{2m}^{\cR,2}$.
Recall that in Proposition \ref{prop:Phin} we define a poset $\Phi_n$
and a poset isomorphism $\psi\colon\Phi_n\to Q_n,$ and in the
proof of Lemma \ref{lem:rightiso} we define a poset isomorphism
$\phi\colon M_{m+1}\to K_{2m}^{\cR,1}$. Letting $n = m+1$ and taking
the composition of $\psi$ and $\phi,$ we obtain an isomorphism from
$\Phi_{m+1}$ to the poset of join-irreducibles of $K_{2m}^{\cR,1}$.
Further, it is easy to see that 
 \beas \phi(\psi((a, b, k)) & = & \phi ( \Add(\cT^0_{m}; a, b,
 k))\\ & = & \Add(\phi(\cT_{m}^0); a, b, k)\\ 
  & = & \Add(\cT'_m; a, b, k), \eeas
where $\cT'_m$ as defined in Definition~\ref{defn:min} is the minimal
element of $K_{2m}^{\cR,1}$. Comparing the definitions of $\cT'_m$ and
$\cT_m,$ one sees that $\cT_m = \Add(\cT'_m; 1, 1, 1)$.
Hence, 
\[ \Add(\cT'_m; a, b, k) \not\le \cT_m \iff k \ge 2.\]
Therefore the right-hand side of \eqref{equ:U2mr1} is isomorphic to
\begin{align*}
\{ (a, b, k) \in \Phi_{m+1} \ | \ k \ge 2 \} =& \{ (a,b,k) \in \P^3
\ | \ 2 \le k \le b \le m-a\} \\ 
\cong& \{ (a,b',k') \in \P^3 \ | \ 1 \le k' \le b' \le m-1-a\}\\
 &  \quad (k'=k-1, b'=b-1) \\ =& \Phi_m,
\end{align*}
which is isomorphic to $Q_m$ by Proposition \ref{prop:Phin}.
\end{proof}

At this point, we have a good understanding of the structure of $U_n$
for even $n$. We summarize this in the following proposition. 
\begin{prop}\label{prop:Uneven} Suppose $n=2m$ for $m \ge 2$.
  The poset $U_n$ of the join-irreducibles of $K_n$ can be divided
  into two disjoint sets $U_n^\ell$ and $U_n^r$, each of which are
  divided into two disjoint sets $U_n^\ell = U_n^{\ell,1} \bigcupdot
  U_n^{\ell,2}$ and $U_n^r = U_n^{r,1} \bigcupdot U_n^{r,2}$ such that
  they satsify the following conditions:  
\begin{alist}
  \itm $U_n^\ell \cong U_n^r \cong Q_{m+1}$.
  \itm $U_n^{\ell, 1} \cong U_n^{r,1} \cong Q_m$.
  \itm No element in $U_n^{\ell,1}$ is comparable to any element in
  $U_n^r$. 
  \itm No element in $U_n^{r,1}$ is comparable to any element in
  $U_n^\ell$. 
  \itm Each element of $U_n^{r,2}$ is smaller than any element in
  $U_n^{\ell,2}$. 
\end{alist}
\end{prop}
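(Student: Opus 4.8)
The plan is to assemble the proposition from results already in hand, since it is essentially a consolidated restatement of Lemmas~\ref{lem:Uniso}, \ref{lem:KnL1even}, \ref{lem:U2mr1} together with Corollary~\ref{cor:charcomp}. First I would check that the claimed four-block decomposition of $U_n$ is an honest partition. Every join-irreducible $T$ of $K_n$ satisfies exactly one of situations (1) and (2) preceding Lemma~\ref{lem:Unr}: it cannot satisfy both, since (1) forces $T^\L$ to be a join-irreducible of $K_n^\L$ while (2) forces $T^\L$ to be the bottom element of $K_n^\L$, and the bottom element of a finite distributive lattice is never join-irreducible. Hence $U_n = U_n^\ell \bigcupdot U_n^r$. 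The refinements $U_n^\ell = U_n^{\ell,1}\bigcupdot U_n^{\ell,2}$ and $U_n^r = U_n^{r,1}\bigcupdot U_n^{r,2}$ are disjoint unions by construction, the first according to whether the last first-row entry $c$ of $T^\L$ equals $1$ or $2$, the second according to whether or not $T^\cR \le \cT_{\lfloor n/2\rfloor}$.

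Next I would dispatch (a) and (b) by citation. Part (a) is Lemma~\ref{lem:Uniso}(a) and (c) with $n=2m$, so that $\lfloor n/2\rfloor+1 = m+1$. For $U_n^{\ell,1}$ in part (b): Lemma~\ref{lem:Uniso}(b) identifies $U_n^{\ell,1}$ with the poset of join-irreducibles of $K_n^{\L,1}$, which is $\cong M_m$ by Lemma~\ref{lem:KnL1even} (here $m\ge 2$ is used); the poset of join-irreducibles of $M_m$ is $Q_m$ by definition, giving $U_{2m}^{\ell,1}\cong Q_m$. The isomorphism $U_{2m}^{r,1}\cong Q_m$ is exactly Lemma~\ref{lem:U2mr1}.

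Finally I would read off (c), (d), (e) from Corollary~\ref{cor:charcomp}(b), which states that for even $n$ a left-join-irreducible $T^\ell$ and a right-join-irreducible $T^r$ are comparable if and only if $T^\ell\in U_n^{\ell,2}$ and $T^r\in U_n^{r,2}$, and in that case $T^r<T^\ell$. Taking the contrapositive with $T^\ell\in U_n^{\ell,1}$ gives (c); taking the contrapositive with $T^r\in U_n^{r,1}$ gives (d); and the implication "(iii) $\Rightarrow$ (ii)" of that corollary gives (e).

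Since every ingredient is already available, I do not anticipate a genuine obstacle; the one spot that warrants an explicit line is the disjointness of $U_n^\ell$ and $U_n^r$, handled above via the remark that the minimum of a finite distributive lattice is not join-irreducible.
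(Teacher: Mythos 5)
Your proposal is correct and matches the paper's own proof, which likewise obtains (a) from Lemma~\ref{lem:Uniso}(a,c), (b) from Lemma~\ref{lem:Uniso}(b) together with Lemmas~\ref{lem:KnL1even} and~\ref{lem:U2mr1}, and (c)--(e) from Corollary~\ref{cor:charcomp}(b). Your extra remark verifying disjointness of $U_n^\ell$ and $U_n^r$ (the minimum of a finite distributive lattice is not join-irreducible) is a harmless elaboration of what the paper treats as built into the definitions.
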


\begin{proof}
 (a) follows from Lemma \ref{lem:Uniso}(a,c), and (c)--(e) follow from
  Corollary \ref{cor:charcomp}(b). Finally, (b) follows from Lemma
  \ref{lem:Uniso}(b), Lemma \ref{lem:KnL1even}, and Lemma
  \ref{lem:U2mr1}.  
\end{proof}

\begin{thm} \label{thm:even}
Suppose $n=2m$ for some $m \ge 2$. Then 
the rank-generating function of $K_n$ is given by
 \beas\label{equ:geneven} F(K_n,q) & = &
  \left((1+q)^{m-2}(1+q^2)^{m-1}\cdots (1+q^{m-2})\right)^2\\ & & \ \ 
  \cdot \left( (1+q) (1+q^2) \cdots (1+q^{m-1}) \times
  \left(1+q^{\binom{m}{2}}\right) - q^{\binom{m}{2}} \right), \eeas
where $F(K_4,q)= (1+q)^2 - q = 1+q+q^2$.
\end{thm}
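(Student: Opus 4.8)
The plan is to compute $F(K_n,q)$ by exploiting the structural decomposition of $U_n$ recorded in Proposition~\ref{prop:Uneven}. Recall that $K_n=J(U_n)$, so $F(K_n,q)$ is the order-ideal generating function of $U_n$, i.e.\ $F(K_n,q)=\sum_{I}q^{\#I}$ over order ideals $I$ of $U_n$. First I would observe that $U_n$ splits as a disjoint union of pieces with very restricted comparabilities: by (c) and (d) the only cross-comparabilities between $U_n^\ell$ and $U_n^r$ occur between $U_n^{\ell,2}$ and $U_n^{r,2}$, and by (e) \emph{every} element of $U_n^{r,2}$ lies below \emph{every} element of $U_n^{\ell,2}$. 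Consequently $U_n^{r,1}$, $U_n^{\ell,1}$ are order-theoretically independent of everything outside themselves, and the generating function factors accordingly; the only genuine interaction is the "ordinal-sum-like" relation $U_n^{r,2}$ below $U_n^{\ell,2}$.

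Next I would set up the bookkeeping. Write $P:=Q_{m+1}$, and inside it let $P_1$ denote the sub-poset isomorphic to $Q_m$ ($\cong U_n^{\ell,1}\cong U_n^{r,1}$) and $P_2$ its complement (so $U_n^{\ell}=P$ with this internal split, and likewise $U_n^{r}$). The key combinatorial fact I will need is: an order ideal of $U_n^r$ that contains all of $U_n^{r,2}$ is exactly a maximal one on that component, and the ideals of $U_n^{\ell,2}$-vs-$U_n^{r,2}$ interaction are governed by ordinal sum, whence $J(U_n^{r,2}\oplus U_n^{\ell,2})$ has generating function $F(J(U_n^{r,2}),q)+q^{\#U_n^{r,2}}\big(F(J(U_n^{\ell,2}),q)-1\big)$ — i.e.\ either the $U_n^{r,2}$ part is a proper ideal (and $U_n^{\ell,2}$ contributes nothing), or $U_n^{r,2}$ is entirely included and then $U_n^{\ell,2}$ may be any ideal of itself. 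Then multiply in the independent factors $F(J(U_n^{r,1}),q)\cdot F(J(U_n^{\ell,1}),q)=F(M_m,q)^2$ from (b), (c), (d). Using Theorem~\ref{thm1} to evaluate $F(M_{m+1},q)=(1+q)^{m-1}(1+q^2)^{m-2}\cdots(1+q^{m-1})$ and $F(M_m,q)=(1+q)^{m-2}\cdots(1+q^{m-2})$, together with $\#U_n^{r,2}=\#U_n^{r}-\#U_n^{r,1}=\#Q_{m+1}-\#Q_m=\binom{m}{2}$ from \eqref{equ:diffQn}, I would then simplify: the $P_2$-part of each of $U_n^\ell,U_n^r$ contributes the ratio $F(M_{m+1},q)/F(M_m,q)=(1+q)(1+q^2)\cdots(1+q^{m-1})$, and assembling the pieces yields
\[
F(K_n,q)=F(M_m,q)^2\Big((1+q)(1+q^2)\cdots(1+q^{m-1})\big(1+q^{\binom{m}{2}}\big)-q^{\binom{m}{2}}\Big),
\]
which is the claimed formula; the $m=2$ check $F(K_4,q)=1+q+q^2$ is immediate.

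The step I expect to be the main obstacle is justifying the "ordinal-sum" generating-function identity cleanly in the presence of the internal decomposition — that is, verifying that an order ideal of $U_n$ is precisely an arbitrary choice of ideals of $U_n^{\ell,1}$ and $U_n^{r,1}$ together with a choice for the $(U_n^{r,2}\oplus U_n^{\ell,2})$ part, with no further constraints, and in particular that $U_n^{\ell,2}$ is indeed an up-set-free "top block" that can only be populated once $U_n^{r,2}$ is saturated. This requires carefully invoking that $U_n^{r,2}$ and $U_n^{\ell,2}$ are each themselves (isomorphic to) $Q_{m+1}$ with the $Q_m$ part removed — but we only know they are \emph{sets} $U_n^{r}\setminus U_n^{r,1}$, not obviously sub-\emph{posets} of a nice form — so I would lean on the fact that for computing $\#J(U_n)$ and its rank-generating function only the induced order on $U_n$ matters, and the induced order on $U_n^{r,2}$ alone has the same order-ideal generating function as $J(\text{that induced poset})$, which combines with the $Q_m$ factor to reconstitute $F(M_{m+1},q)$. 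Once that is pinned down, the remaining manipulations are routine polynomial algebra, and setting $q=1$ will recover $g(2m)=2^{(m-1)(m-2)}(2^m-1)$ as a corollary in the next paragraph.
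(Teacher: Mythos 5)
Your final formula is the correct one, but the structural claim your derivation rests on is false. From Proposition \ref{prop:Uneven}(c)--(e) you conclude that ``$U_n^{r,1}$, $U_n^{\ell,1}$ are order-theoretically independent of everything outside themselves,'' so that $F(K_n,q)$ factors as $F(J(U_n^{\ell,1}),q)\,F(J(U_n^{r,1}),q)\,F\bigl(J(U_n^{r,2}\oplus U_n^{\ell,2}),q\bigr)$, and later you replace $F(J(U_n^{r,2}),q)$ and $F(J(U_n^{\ell,2}),q)$ by the ratio $F(M_{m+1},q)/F(M_m,q)$. Neither step is justified: Proposition \ref{prop:Uneven} excludes comparabilities between $U_n^{\ell,1}$ and $U_n^{r}$ and between $U_n^{r,1}$ and $U_n^{\ell}$, but says nothing about comparabilities \emph{inside} $U_n^{\ell}$ (between $U_n^{\ell,1}$ and $U_n^{\ell,2}$) or inside $U_n^{r}$ (between $U_n^{r,1}$ and $U_n^{r,2}$), and these do occur. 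Concretely, take $n=6$, $m=3$: $U_6^{r}\cong Q_4\cong\Phi_4=\{(1,1,1),(1,2,1),(1,2,2),(2,1,1)\}$, with $U_6^{r,1}=\{(1,2,2)\}$ and $U_6^{r,2}=\{(1,1,1),(1,2,1),(2,1,1)\}$ (the elements with $k=1$, cf.\ the proof of Lemma \ref{lem:U2mr1}); here $(1,2,1)<(1,2,2)$, so $U_6^{r,1}$ is not order-independent of $U_6^{r,2}$. Accordingly $F(J(U_6^{r,2}),q)=1+2q+q^2+q^3\neq(1+q)(1+q^2)=F(M_4,q)/F(M_3,q)$, and $F(J(U_6^{r,1}),q)\,F(J(U_6^{r,2}),q)=(1+q)(1+2q+q^2+q^3)\neq F(J(U_6^{r}),q)=F(M_4,q)$; the analogous failure occurs on the left side (there $(2,1,1)<(1,1,1)$). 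So your claim that an order ideal of $U_n$ is an arbitrary, unconstrained choice of ideals of $U_n^{\ell,1}$, $U_n^{r,1}$ and of $U_n^{r,2}\oplus U_n^{\ell,2}$ is wrong, and the fix you sketch (that the induced order on $U_n^{r,2}$ ``combines with the $Q_m$ factor to reconstitute $F(M_{m+1},q)$'') is exactly the false multiplicativity.

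The way to exploit Proposition \ref{prop:Uneven} --- and what the paper does --- is to split the sum over order ideals $I$ of $U_n$ according to whether $I$ meets $U_n^{\ell,2}$, rather than factoring over the four blocks. Since $U_n^{\ell,1}$ is a down-set of $U_n^{\ell}$ and $U_n^{r,2}$ is a down-set of $U_n^{r}$, the ideals avoiding $U_n^{\ell,2}$ contribute $F(J(U_n^{\ell,1}),q)\,F(J(U_n^{r}),q)=F(M_m,q)\,F(M_{m+1},q)$, while an ideal meeting $U_n^{\ell,2}$ must contain all of $U_n^{r,2}$ by (e), so these contribute $\bigl(F(M_{m+1},q)-F(M_m,q)\bigr)\,q^{\#U_n^{r,2}}\,F(M_m,q)$ with $\#U_n^{r,2}=\#Q_{m+1}-\#Q_m=\binom{m}{2}$ by \eqref{equ:diffQn}. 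Adding the two contributions gives the stated formula; note that this argument never needs $F(J(U_n^{r,2}),q)$ or $F(J(U_n^{\ell,2}),q)$ themselves, which is fortunate because they are not equal to the ratio you assumed.
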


\begin{proof}
  The part of $F(K_n, q) = F(J(U_n),q)$ which corresponds to order ideals
  that do not contain any element of $U_n^{\ell,2}$ is 
  \bea\label{equ:part1}
    F(J(U_n^r + U_n^{\ell,1}), q) & = & F(J(U_n^r),q)
    F(J(U_n^{\ell,1}),q) \nonumber \\ 
    & = & F(M_{m+1},q) F(M_{m},q), \eea
  and the part corresponding to order ideals that contain at least one
  element from $U_n^{\ell,2}$ (and thus contain all the elements in
  $U_n^{r,2}$) is 
 \begin{align}   & \left( F(J(U_n^\ell),q) - F(J(U_n^{\ell,1}),
   q)\right) \times q^{\# U_n^{r,2}} \times F(J(U_n^{r,1}), q)
   \nonumber \\ 
   =&  \left( F(M_{m+1},q) - F(M_m,q) \right) \times q^{\# Q_{m+1} -
     \# Q_m} \times F(M_m, q).\label{equ:part2} 
 \end{align}
 We obtain the formula for $F(K_n, q)$ by adding \eqref{equ:part1} and
 \eqref{equ:part2}, and then substituting from formulas
 \eqref{equ:genMn} and  \eqref{equ:diffQn}. 
\end{proof}

\begin{cor}Suppose $n=2m$ for some $m \ge 1$. Then
\[ g(n) = \# K_n = 2^{(m-1)(m-2)}(2^m-1).\] 
\end{cor}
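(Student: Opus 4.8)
The plan is to read off $g(n)$ from the rank-generating function of $K_n$ computed in Theorem~\ref{thm:even}, simply by setting $q=1$. Indeed, as noted at the start of this section, the nonemptyness of $K_n$ together with Corollary~\ref{cor:useMn}(c) gives $g(n)=\#K_n$, and since $F(K_n,q)=\sum_{x\in K_n}q^{\operatorname{rank}(x)}$ we have $\#K_n=F(K_n,1)$. Write $n=2m$ and assume first that $m\ge 2$, so that Theorem~\ref{thm:even} applies.

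First I would recall from Theorem~\ref{thm1} that specializing the rank-generating function of $M_j$ at $q=1$ yields $\#M_j=2^{\binom{j-1}{2}}$. The first factor of $F(K_n,q)$ in Theorem~\ref{thm:even} is exactly $F(M_m,q)^2$ (this is visible from its proof, where Part~1 and Part~2 are each assembled from $F(M_m,q)$ and $F(M_{m+1},q)$ and the identity $F(M_{m+1},q)=F(M_m,q)\prod_{i=1}^{m-1}(1+q^i)$ is used), so it specializes to $\bigl(2^{\binom{m-1}{2}}\bigr)^2=2^{2\binom{m-1}{2}}=2^{(m-1)(m-2)}$, using $2\binom{m-1}{2}=(m-1)(m-2)$. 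Next I would specialize the second factor: at $q=1$ the product $(1+q)(1+q^2)\cdots(1+q^{m-1})$ becomes $2^{m-1}$, the term $1+q^{\binom m2}$ becomes $2$, and $q^{\binom m2}$ becomes $1$, so the second factor becomes $2^{m-1}\cdot 2-1=2^m-1$. Multiplying the two specializations gives $g(2m)=\#K_{2m}=2^{(m-1)(m-2)}(2^m-1)$.

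Finally I would dispose of the leftover case $m=1$, i.e.\ $n=2$, which is not covered by Theorem~\ref{thm:even}: here $\delta_{n-1}$ is the empty shape, so $M_2$ and hence its sublattice $K_2$ each consist of a single (empty) tableau, giving $g(2)=\#K_2=1$, which matches $2^{(m-1)(m-2)}(2^m-1)=2^0\cdot 1=1$. I do not anticipate any real obstacle: the argument is a one-line substitution, and the only points requiring care are the bookkeeping identity $2\binom{m-1}{2}=(m-1)(m-2)$ and the degenerate base case $m=1$.
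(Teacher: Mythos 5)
Your proposal is correct and follows essentially the same route as the paper: the paper also obtains the formula by substituting $q=1$ into the expression for $F(K_n,q)$ in Theorem~\ref{thm:even} for $m\ge 2$, and checks the degenerate case $m=1$ (where $K_2$ consists of a single element) directly, exactly as you do.
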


\begin{proof}
If $m=1,$ one checks directly that $K_n$ contains one element. Hence,
$g(2) = \# K_2 = 1 = 2^0 (2^1-1)$. 

For $m \ge 2$ the conclusion follows from substituting $q=1$ in the formula for $F(K_n, q)$ given in Theorem \ref{thm:even}.
\end{proof}


\begin{thebibliography}{99}
\small \setlength{\itemsep}{-.8mm}

\bibitem{blog} G. Antonick, \emph{The Improbable Life of Paul
  Erd\H{o}s}, \emph{New York Times} Wordplay blog, March 25, 2013,
  $$
  \mbox{\texttt{http://wordplay.blogs.nytimes.com/2013/03/25/erdos}.} $$ 
%
 \bibitem{propp} J. Propp (moderator), Domino forum,
 $$
   \mbox{\texttt{http://faculty.uml.edu/jpropp/about-domino.txt}.} $$  
%
\bibitem{ec1} R. P. Stanley, \emph{Enumerative Combinatorics},
  vol.~1, second ed., Cambridge Studies in Advanced Mathematics, vol. 49,
  Cambridge University Press, Cambridge, 2012.
%
\bibitem{ec2} R. P. Stanley, \emph{Enumerative Combinatorics},
  vol.~2, Cambridge Studies in Advanced Mathematics, vol. 62,
  Cambridge University Press, Cambridge, 1999.

\end{thebibliography}
\end{document}